\documentclass[11pt, reqno]{amsart}
\usepackage{inputenc}
\usepackage{amsmath}
\usepackage{amssymb}
\usepackage{color}
\usepackage{mathtools}
\usepackage{bbm}
\usepackage{calrsfs}
\DeclareMathAlphabet{\pazocal}{OMS}{zplm}{m}{n}
\usepackage{epsfig}
\usepackage[mathscr]{eucal}
\usepackage{enumerate}
\usepackage[shortlabels]{enumitem}
\usepackage{verbatim}

\usepackage{xcolor}
\usepackage{hyperref}
\usepackage[nameinlink]{cleveref}
\hypersetup{
	colorlinks,
	linkcolor={blue!50!black},
	citecolor={blue!50!black},
	urlcolor={blue!50!black}
}

\usepackage{blindtext}

\newtheorem{theorem}{Theorem}
\newtheorem{definition}{Definition}
\newtheorem{lemma}{Lemma}
\newtheorem{proposition}{Proposition}
\newtheorem{corollary}{Corollary}
\newtheorem{remark}{Remark}
\newtheorem{assumption}{Assumption}
\newtheorem{question}{Question}

\numberwithin{equation}{section}
\numberwithin{theorem}{section}
\numberwithin{lemma}{section}
\numberwithin{corollary}{section}
\numberwithin{remark}{section} \numberwithin{proposition}{section}
\numberwithin{definition}{section}
\numberwithin{assumption}{section}

\usepackage[margin=1in]{geometry}

\newcommand{\dd}{\mathrm{d}}
\newcommand{\supp}{\operatorname{supp}}

\newcommand{\loc}{\operatorname{loc}}

\newcommand{\Div}{\operatorname{div}}

\newcommand{\bv}{\operatorname{BV}}

\newcommand{\slog}{\operatorname{slog}}
\newcommand{\sexp}{\operatorname{sexp}}

\newcommand{\pseudo}[1]{{\left\vert\kern-0.25ex\left\vert\kern-0.25ex\left\vert #1 
		\right\vert\kern-0.25ex\right\vert\kern-0.25ex\right\vert}}

\usepackage[foot]{amsaddr}

\newcommand{\R}{\mathbb R}

\begin{document}

\title[DiPerna--Lions flows]{Quantitative Osgood regularity for DiPerna--Lions flows}

\author[H. Borrin\and J.F. Nariyoshi]{Henrique Borrin$^\dagger$ \and João Fernando Nariyoshi$^\ddagger$}
\address{$^\dagger$Faculdade de Filosofia, Ci\^{e}ncias e Letras de Ribeir\~{a}o Preto, USP-Universidade de São Paulo  Address: Avenida Bandeirantes, 3900. Ribeir\~{a}o Preto, SP, Brazil. Zip Code 14040-901. ORCID: 0000-0003-4670-4444.}
\email{henriqueborrin@usp.br (Corresponding author)}
\address{$^\ddagger$Instituto de Matemática e Estatística, USP-Universidade de São Paulo // Address: Rua do Matão, 1010. São Paulo, SP, Brazil. Zip code 05508090. ORCID: 0000-0001-6881-3305.}
\email{jfc@ime.usp.br}

\thispagestyle{empty}

\begin{abstract}
We study the spatial regularity of regular Lagrangian flows associated with vector fields in the DiPerna--Lions class \(\boldsymbol b\in L^1((0,T);W^{1,1}_{\loc}(\mathbb R^d)), \) under the standard growth and compressibility assumptions. For vector fields in \(L^1_tW^{1,p}_{\loc,x}\), with \(p>1\), the flow \(\boldsymbol X(t,\cdot)\) is known to satisfy a quantitative local Lipschitz estimate, which implies that it is Lipschitz continuous in the Lusin sense. We prove that, at the endpoint \(p=1\), this estimate admits an Osgood-type counterpart. More precisely, we construct an increasing function \(G\), with \(G(0+)=-\infty\), determined by the integrability of \(D\boldsymbol b\), such that \[ G\bigl(|\boldsymbol X(t,x)-\boldsymbol X(t,y)|\bigr) \leq G\bigl(|\boldsymbol X(s,x)-\boldsymbol X(s,y)|\bigr) + \int_s^t \bigl(k(\tau,x)+k(\tau,y)\bigr)\,\dd\tau, \] where \(k\) is locally integrable. As a consequence, the flow \(\boldsymbol X(t,\cdot)\) is uniformly continuous outside a set of arbitrarily small measure, with an explicit modulus of continuity determined by the integrability properties of \(D\boldsymbol b\). The resulting moduli include H\"older and log-Lipschitz regimes, as well as substantially weaker Osgood moduli. Our approach is based on a new family of weighted maximal operators associated with slowly varying functions in the sense of Karamata. We also provide examples showing that the resulting estimates are sharp in several respects and that the classical Lipschitz-type estimate may fail at the endpoint \(p=1\). Finally, we apply the flow estimates to transport equations, obtaining weighted logarithmic Sobolev regularity for transported scalars and corresponding lower bounds on functional and geometric mixing scales in the \(W^{1,1}\) setting. 

\bigskip

\noindent \textbf{Keywords:} Transport equations, Lagrangian flows, renormalized solutions, mixing.
\bigskip

\noindent \textbf{2020 AMS Subject Classifications:} 34A12, 35F10, 35F25.
\end{abstract}

\maketitle

\section{Introduction}
\subsection{Motivation}

Let us consider the ordinary differential equation
\begin{equation}\label{flow}
	\begin{cases}
		\displaystyle\frac{\dd}{\dd t}\boldsymbol{X}(t,0,x)=\boldsymbol{b}(t,\boldsymbol{X}(t,0,x))\quad &\text{for } (t,x)\in (0,T)\times\mathbb{R}^d;\\
		\boldsymbol{X}(0,0,x)=x \quad&\text{for } x\in\mathbb{R}^d,
	\end{cases}
\end{equation}
where the vector field $\boldsymbol{b} : (0,T)\times \mathbb R^d \to \mathbb R^d$ is assumed to satisfy the classical hypotheses of DiPerna--Lions \cite{dipernalions}, namely:
\begin{itemize}
    \item Sobolev regularity:
    \begin{equation}
            \boldsymbol{b} \in L^1((0,T); W_{\loc}^{1,1}(\mathbb R^d)); \label{regularity}
    \end{equation}
    \item Growth condition:
    \begin{equation}\label{growth}
	\begin{split}
		\frac{\boldsymbol{b}(t,x)}{1+|x|}=\boldsymbol{b}^1(t,x)+\boldsymbol{b}^2(t,x),
	\end{split}
\end{equation}
where $\boldsymbol{b}^1\in L^1((0,T)\times\mathbb{R}^d)$ and $\boldsymbol{b}^2\in L^1((0,T);L^\infty(\mathbb{R}^d))$; and
\item Compressibility condition:
\begin{equation}
    (\Div \boldsymbol b)_- \in L^1((0,T); L^\infty(\mathbb R^d)). \label{compressibility}
\end{equation}
\end{itemize}
Under these assumptions, DiPerna and Lions proved in their seminal work \cite{dipernalions} that \eqref{flow} admits a unique regular Lagrangian flow; see also \cite{ambrosio,bouchutcrippa,partiallyregular,miotsingularset,nguyen} and the references therein. Thus, although neither the vector field nor its flow need be classically regular, the ordinary differential equation is well posed for almost every initial condition.

A natural question, and the main subject of the present paper, is whether the map
\(
x\mapsto \boldsymbol X(t,0,x)
\)
retains any quantitative continuity with respect to the initial datum. Besides strengthening the Lagrangian formulation of the DiPerna--Lions theory, such an estimate yields regularity information for solutions of the associated transport equation
\begin{equation}
    \partial_t u+\boldsymbol b\cdot Du=0.
    \label{transport}
\end{equation}

The problem of quantitative regularity for regular Lagrangian flows was first addressed by Crippa and De Lellis in their groundbreaking paper \cite{crippadelellis}. To simplify the discussion, we temporarily assume, as in \cite{crippadelellis}, that
\begin{equation}
    \boldsymbol b\in L^\infty((0,T)\times\mathbb R^d).
    \label{linfty}
\end{equation}
Building on a previous result of Ambrosio--Lecumberry--Maniglia \cite{Ambrosio2005}, Crippa and De Lellis proved that, under the stronger assumption
\begin{equation}
    \boldsymbol b\in L^1\bigl((0,T);W^{1,p}_{\loc}(\mathbb R^d)\bigr),
    \qquad p>1,
    \label{assumptionp1}
\end{equation}
the flow is locally Lipschitz continuous in the Lusin sense. More precisely, given any ball \(B_R\subset\mathbb R^d\) and any \(\epsilon>0\), there exists a set \(\Omega_\epsilon\subset B_R\) such that
\(\mathcal L^d(B_R\setminus\Omega_\epsilon)<\epsilon\) and
\(\boldsymbol X(t,0,\cdot)|_{\Omega_\epsilon}\) is Lipschitz continuous. We emphasize that, in general, one must remove a small set of ``bad'' initial data in order to obtain such a regularity statement, since the full map
\(\boldsymbol X(t,0,\cdot)|_{B_R}\) may be highly irregular; see
\cite{jabin2,mazzucato1,mazzucato2}.

A considerably simpler proof of the Crippa--De Lellis result was later given by Bru\`e and Nguyen \cite{bruenguyen}. Their argument is based on the stronger pointwise estimate
\begin{equation}
    |\boldsymbol X(t,0,x)-\boldsymbol X(t,0,y)|
    \leq
    \exp\left(
        \int_s^t\bigl(k(\tau,x)+k(\tau,y)\bigr)\,\dd\tau
    \right)
    |\boldsymbol X(s,0,x)-\boldsymbol X(s,0,y)|,
    \label{estimate}
\end{equation}
which holds for almost every \(x,y\in\mathbb R^d\), every
\(0\leq s\leq t\leq T\), and a suitable function
\(k\in L^1((0,T);L^p_{\loc}(\mathbb R^d))\). The Lusin--Lipschitz regularity then follows immediately by applying Chebyshev's inequality to
\(x\mapsto\int_0^T k(\tau,x)\,\dd\tau\) and taking \(s=0\).

At the endpoint \(p=1\), the proof of \eqref{estimate} breaks down. Indeed, the argument relies on the strong \(L^p\)-boundedness of the Hardy--Littlewood maximal operator, whereas at \(p=1\) only a weak-type estimate is available. Although the existence and uniqueness of the regular Lagrangian flow remain valid in this regime through a modified argument due to Jabin \cite{jabin}, it is not clear how to obtain an estimate of the form \eqref{estimate} for a general vector field in the DiPerna--Lions class.

Our main result provides a weaker, but remarkably robust, substitute for \eqref{estimate} under the endpoint assumption \eqref{regularity}. We prove that there exist an increasing continuous function
\(G:(0,\infty)\to\mathbb R\), with \(G(0+)=-\infty\), and a locally integrable function \(k\) such that
\begin{equation}
    G\bigl(|\boldsymbol X(t,0,x)-\boldsymbol X(t,0,y)|\bigr)
    \leq
    G\bigl(|\boldsymbol X(s,0,x)-\boldsymbol X(s,0,y)|\bigr)
    +
    \int_s^t\bigl(k(\tau,x)+k(\tau,y)\bigr)\,\dd\tau.
    \label{estimatenew}
\end{equation}
Both \(G\) and \(k\) are constructed from the integrability properties of \(D\boldsymbol b\). When \(G(r)=\log r\), estimate \eqref{estimatenew} reduces to the classical Lipschitz-type bound \eqref{estimate}.

As an immediate consequence, for every ball \(B_R\) and every
\(\epsilon>0\), there exists a set
\(\Omega_{\epsilon,R}\subset B_R\), with
\(\mathcal L^d(B_R\setminus\Omega_{\epsilon,R})<\epsilon\), such that
\(\boldsymbol X(t,0,\cdot)\) is uniformly continuous on
\(\Omega_{\epsilon,R}\), uniformly with respect to \(t\). Moreover, the modulus of continuity is explicit and has the form
\(\omega^\epsilon(r)=G^{-1}(G(r)+C_\epsilon)\). Depending on the integrability of \(D\boldsymbol b\), the resulting moduli range from H\"older and log-Lipschitz behavior to substantially weaker regimes.

The proof is based on a new family of weighted maximal operators associated with slowly varying functions in the sense of Karamata. We also provide several examples that clarify the sharpness and the range of validity of the resulting estimates. Finally, we apply the flow estimate to transported scalars and derive lower bounds on geometric and functional mixing scales.

From a conceptual perspective, our results suggest that the endpoint \(p=1\) is governed by an Osgood-type mechanism, in contrast with the Cauchy--Lipschitz structure underlying the range \(p>1\).

\subsection{Statement of the main results}

Without delving too deeply into the technical details, our main theorem may be stated as follows. We shall use the standard notion of regular Lagrangian flow in the renormalized sense, which will be recalled precisely in \Cref{rlf}. For simplicity, we formulate it only for the initial time $s=0$, although a straightforward adaptation yields the corresponding statement for arbitrary $s\in[0,T)$. 

\begin{theorem}\label{main}
Let $\boldsymbol b(t,x)$ be a vector field satisfying \eqref{regularity}, \eqref{growth}, and \eqref{compressibility}, and let $\boldsymbol X(t,s,\cdot)$ denote the associated regular Lagrangian flow of \eqref{flow}. Let also $R>0$, and set $B_R=\{x\in\mathbb R^d:\ |x|\leq R\}$.

Then, for every $\epsilon>0$, there exist
\begin{enumerate}
    \item a measurable set $U_{\epsilon,R}\subset B_R$ such that $\mathcal{L}^d(B_R\setminus U_{\epsilon,R})<\epsilon/2$,
    \item a number \(\lambda=\lambda(R,\epsilon)>0\),
    \item a continuous increasing function $G=G_\lambda:(0,\infty)\to\mathbb R$ such that $G(0+)=-\infty$, and
    \item a function $k\in L^1((0,T)\times U_{\epsilon, R})$
\end{enumerate}
such that
\begin{equation}\label{inequalitycrude}
    G\bigl(|\boldsymbol{X}(t,0,x)-\boldsymbol{X}(t,0,y)|\bigr)
    \leq
    G\bigl(|\boldsymbol{X}(s,0,x)-\boldsymbol{X}(s,0,y)|\bigr)
    +
    \int_s^t\bigl(k(\tau,x)+k(\tau,y)\bigr)\,\dd \tau
\end{equation}
for all $x,y\in U_{\epsilon,R}$ and all $0\leq s\leq t\leq T$. The functions $G(r)$ and $k(\tau,x)$ depend only on the modulus of integrability of $D\boldsymbol b(t,x)$ on the cylinder $(0,T)\times B_{R+2\lambda}$.

If, in addition, $\boldsymbol b\in L^1((0,T); L^\infty( \mathbb R^d))$, then $\lambda$ may be taken to be $\int_0^T\Vert \boldsymbol b(t,\cdot)\Vert_{L^\infty(\mathbb R^d)}\,\dd t$, and $U_{\epsilon,R}$ may be taken to be $B_R$.

If \eqref{regularity} is strengthened to $\boldsymbol b \in L^1((0,T); W^{1,1}(\mathbb R^d))$, then $\lambda$ may also be taken equal to $\infty$, and $U_{\epsilon,R}$ may be taken to be $B_R$.
\end{theorem}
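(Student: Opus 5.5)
We follow the Brué--Nguyen strategy, replacing the logarithm with an Osgood-type function $G$ and the Hardy--Littlewood maximal function with a weighted maximal operator adapted to a slowly varying function.

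\textbf{Step 1: a superlinear Young function.} Fix $R,\epsilon$. Using the growth condition \eqref{growth} and the standard superlevel estimate for regular Lagrangian flows, we choose $\lambda=\lambda(R,\epsilon)$ and a set $U_{\epsilon,R}\subset B_R$ with $\mathcal L^d(B_R\setminus U_{\epsilon,R})<\epsilon/2$ such that every trajectory starting in $U_{\epsilon,R}$ stays in $B_{R+\lambda}$ for $t\in[0,T]$. If $\boldsymbol b\in L^1_tL^\infty_x$, this holds trivially with $\lambda=\int_0^T\|\boldsymbol b\|_{L^\infty}$ and $U_{\epsilon,R}=B_R$.

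Since $D\boldsymbol b\in L^1((0,T)\times B_{R+2\lambda})$, the de la Vallée-Poussin lemma yields a convex superlinear $\Phi$ with $\int\!\!\int \Phi(|D\boldsymbol b|)<\infty$. We may take $\Phi(s)=s\,\ell(s)$ with $\ell$ increasing, unbounded and slowly varying in the sense of Karamata; this is possible because the growth can always be slowed down.

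\textbf{Step 2: definition of $G$.} Set
\[
G(r)=-\int_r^{1}\frac{\dd\rho}{\rho\,\ell(1/\rho)},
\]
suitably extended for $r\ge1$. We must ensure $G(0+)=-\infty$, i.e.\ that $\int_0 \dd\rho/(\rho\,\ell(1/\rho))$ diverges. This is the Osgood condition. If $\ell$ grows too fast it fails, so we first replace $\ell$ by $\min(\ell,\log)$ or slower; that is still slowly varying and unbounded, so the modulus of integrability is preserved.

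\textbf{Step 3: the weighted maximal operator.} For $\Phi$ as above, define
\[
M_\ell f(x)=\sup_{r}\ \frac{1}{\ell(1/r)}\fint_{B_r(x)}|f|.
\]
The key lemma is the strong-type bound $\int_{B} M_\ell f\lesssim \int \Phi(|f|)+C$, which replaces the missing $L^1$ bound of the classical maximal function. To prove it, we split $f$ at height $\ell(1/r)$ scale by scale and use the weak $(1,1)$ inequality together with the slow variation of $\ell$. Combined with the standard pointwise estimate
\[
|\boldsymbol b(x)-\boldsymbol b(y)|\lesssim |x-y|\bigl(M_{|x-y|}D\boldsymbol b(x)+M_{|x-y|}D\boldsymbol b(y)\bigr),
\]
this gives
\[
|\boldsymbol b(x)-\boldsymbol b(y)|\lesssim |x-y|\,\ell(1/|x-y|)\bigl(k(x)+k(y)\bigr),\qquad k=M_\ell D\boldsymbol b .
\]
Consequently $k\in L^1((0,T)\times B_{R+\lambda})$. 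We then set $k(\tau,x)$ to be this maximal function evaluated at $\boldsymbol X(\tau,0,x)$. Integrability of $k$ over $(0,T)\times U_{\epsilon,R}$ follows from the compressibility bound on $(\Div\boldsymbol b)_-$, which controls the Jacobian of the push-forward.

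\textbf{Step 4: the differential inequality.} Along trajectories,
\[
\frac{\dd}{\dd t}G(|\boldsymbol X(t,x)-\boldsymbol X(t,y)|)\le G'(r)\,|\boldsymbol b(\boldsymbol X(t,x))-\boldsymbol b(\boldsymbol X(t,y))|\lesssim k(t,x)+k(t,y),
\]
since $G'(r)\,r\,\ell(1/r)=1$. Integrating from $s$ to $t$ gives \eqref{inequalitycrude}, first for a.e.\ pair and then for all pairs in $U_{\epsilon,R}$ after redefining on null sets. Large distances, where $\ell$ is evaluated below $1$, are handled by a cutoff. When $\boldsymbol b\in L^1_tW^{1,1}$ globally, $D\boldsymbol b$ is integrable on all of $\mathbb R^d$, so no localization is needed and we may take $\lambda=\infty$.

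\textbf{Main obstacle.} The hard part is the maximal-operator bound in Step 3, namely choosing $\ell$ compatible simultaneously with $\Phi$-integrability, slow variation and the Osgood divergence. We would try the Calderón--Zygmund/Stein $L\log L$-type argument first, using Potter's bounds for the slowly varying function $\ell$.
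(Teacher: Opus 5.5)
Step~3 fails as stated. You pair the weight $\ell(1/r)$ in the maximal operator with the Young function $\Phi(s)=s\ell(s)$. With that pairing, the bound $\int_B M_\ell f\lesssim\int\Phi(|f|)+C$ is false whenever $\ell$ grows slower than roughly $\sqrt{\log}$. That is exactly the regime a general DiPerna--Lions field forces on you, and your Step~2 pushes toward it by passing to $\min(\ell,\log)$ ``or slower''. For example, take $\ell=\log\log$ and $f(y)=|y|^{-d}\log(1/|y|)^{-2}$ near the origin. Then $\int_{B_\rho}f\simeq 1/\log(1/\rho)$ and $f\in L\log\log L$. For $|x|=\rho$, the ball $B_{2\rho}(x)\supset B_\rho(0)$ gives $M_\ell f(x)\gtrsim\bigl(\rho^d\log(1/\rho)\log\log(1/\rho)\bigr)^{-1}$, hence $\int M_\ell f\gtrsim\int_0\frac{\dd\rho}{\rho\log(1/\rho)\log\log(1/\rho)}=\infty$.

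The weak-type-plus-Fubini argument you have in mind actually yields $\int M^g f\lesssim C+\int|f|\,\Psi_g(|f|)$ with $\Psi_g(z)=\int^z\frac{\dd\sigma}{\sigma\,g(\sigma^{-1/d})}$. So the Young function is governed by the \emph{antiderivative} of $1/(\sigma g)$, not by $g$ itself. For $g(r)=\ell(1/r)$ this is about $\int^z\frac{\dd\sigma}{\sigma\ell(\sigma)}\geq\frac{\log z}{2\ell(z)}$, which is $\gg\ell(z)$ once $\ell=o(\sqrt{\log})$. The ``main obstacle'' you flag is therefore not merely delicate but unsolvable under your pairing. Integrability of $D\boldsymbol b$ forces $\ell$ to be arbitrarily slow, while your maximal bound needs $\ell\gtrsim\sqrt{\log}$. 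Your scheme does work, for instance, when $D\boldsymbol b\in L\log^bL$ with $1/2\leq b\leq 1$, but not in the endpoint class.

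The missing idea is to reverse the relation. Given $\Phi(s)=s\Psi(s)$ integrating $D\boldsymbol b$, define the weight by $g(r)=1/\bigl(r^{-d}\Psi'(r^{-d})\bigr)$. Then $\Psi_g=\Psi$ up to a constant, so the Wiener bound holds with exactly $\Phi$. Moreover $G(r)=\int_a^r\frac{\dd u}{u g(u)}=-\frac1d\Psi(r^{-d})+C$, so the Osgood divergence is \emph{equivalent} to $\Psi\to\infty$, i.e.\ to superlinearity of $\Phi$, and there is no tension left.

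The cost is that all structural requirements move to $s\Psi'(s)$. It must be positive and eventually nonincreasing, so that $g$ is nonincreasing near $0$; this is used in the covering step, where $g(r)$ is bounded below by its value at the critical radius. It must also be slowly varying, so that $\int_0^1\tau^{d-1}g(z\tau)\,\dd\tau\leq Cg(z)$ follows from Potter's bounds. Slow variation of $\Psi$ itself does not give this. Producing such a $\Psi\leq\Psi_0$ below an arbitrary de la Vall\'ee Poussin function is a genuine construction: the paper builds $\log(s\Psi'(s))$, in the variable $\log s$, as a continuous piecewise affine function with slopes tending to $0$.

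A smaller point concerns your weighted mean-value inequality. It does not follow from the truncated Lusin inequality, because for $r<|x-y|$ one has $\ell(1/r)\geq\ell(1/|x-y|)$. Averages at small scales are therefore not controlled by $\ell(1/|x-y|)M_\ell D\boldsymbol b$. You have to return to the integral-over-scales representation, where the factor $\tau^{d-1}$ absorbs the growth of $g$ at small scales.
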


\begin{corollary}[Lusin continuity]\label{lusin}
Under the hypotheses of \Cref{main}, for every $\epsilon>0$ there exist a set $\Omega_{\epsilon,R}\subset B_R$ such that $\mathcal L^d(B_R\setminus \Omega_{\epsilon,R})<\epsilon$ and an increasing continuous function $\omega^\epsilon:\mathbb{R}_+\rightarrow \mathbb{R}_+$ such that $\omega^\epsilon(0^+)=0$ and, for all $t\in[0,T]$,
\begin{equation}\label{lipschitz}
	|\boldsymbol{X}(t,0,x)-\boldsymbol{X}(t,0,y)|\leq \omega^{\epsilon}(|x-y|)\quad \text{for all } x,y\in \Omega_{\epsilon,R}.
\end{equation}
In other words, $\boldsymbol{X}(t,0,\cdot)$ is uniformly continuous on $\Omega_{\epsilon,R}$.
\end{corollary}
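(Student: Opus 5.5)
We deduce \Cref{lusin} directly from \Cref{main} with $s=0$, in the same way as the Chebyshev argument that yields Lusin--Lipschitz regularity from \eqref{estimate}. Fix $\epsilon>0$ and $R>0$. Apply \Cref{main} to obtain $U_{\epsilon,R}\subset B_R$ with $\mathcal L^d(B_R\setminus U_{\epsilon,R})<\epsilon/2$, together with $\lambda$, $G=G_\lambda$ and $k\in L^1((0,T)\times U_{\epsilon,R})$. Since only integrals in time enter, we may replace $k$ by $|k|$ (or assume $k\ge 0$), and define
\[
K(x)=\int_0^T |k(\tau,x)|\,\dd\tau ,\qquad x\in U_{\epsilon,R}.
\]
By Fubini, $K\in L^1(U_{\epsilon,R})$.

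Next choose $M>0$ so large that $\|K\|_{L^1(U_{\epsilon,R})}/M<\epsilon/2$, and set $\Omega_{\epsilon,R}=\{x\in U_{\epsilon,R}: K(x)\le M\}$. Chebyshev's inequality gives $\mathcal L^d(U_{\epsilon,R}\setminus\Omega_{\epsilon,R})<\epsilon/2$, so $\mathcal L^d(B_R\setminus\Omega_{\epsilon,R})<\epsilon$. For $x,y\in\Omega_{\epsilon,R}$ and $t\in[0,T]$, inequality \eqref{inequalitycrude} with $s=0$ and $\boldsymbol X(0,0,z)=z$ gives
\[
G\bigl(|\boldsymbol X(t,0,x)-\boldsymbol X(t,0,y)|\bigr)\le G(|x-y|)+2M .
\]
One point needs care. \eqref{inequalitycrude} is stated for all $x,y\in U_{\epsilon,R}$, but the flow is only defined almost everywhere. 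If it holds only for a.e.\ pair, we remove a null set from $\Omega_{\epsilon,R}$, which does not change the measure estimate.

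It remains to invert $G$, which is where the only real subtlety lies. $G$ is continuous and increasing on $(0,\infty)$ with $G(0+)=-\infty$, but it may be bounded above, with $G(\infty)=L<\infty$. Define
\[
\omega^\epsilon(r)=G^{-1}\bigl(G(r)+2M\bigr)\quad\text{whenever } G(r)+2M<L .
\]
This is well defined because $G$ is a continuous increasing bijection of $(0,\infty)$ onto $(-\infty,L)$. We handle the remaining values of $r$ using the a priori bound $|\boldsymbol X(t,0,x)-\boldsymbol X(t,0,y)|\le 2\sup_{t,z\in\Omega}|\boldsymbol X(t,0,z)|$.

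This bound may fail without boundedness of $\boldsymbol b$, so to secure it we first shrink $\Omega_{\epsilon,R}$ further. The standard estimate for regular Lagrangian flows under \eqref{growth} gives that $\sup_t|\boldsymbol X(t,0,x)|$ is finite a.e. (its sublevel sets exhaust $B_R$). Hence we may intersect with $\{\sup_t|\boldsymbol X(t,0,\cdot)|\le \Lambda\}$ for $\Lambda$ large, at the cost of a further $\epsilon/4$ of measure; this means redistributing $\epsilon$ among the three removed sets. Alternatively, it is cleaner to note that \Cref{main}'s construction with $\lambda$ already confines trajectories from $U_{\epsilon,R}$ to $B_{R+2\lambda}$, giving the bound $2(R+2\lambda)$ directly.

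With the bound $D$ on the displacements in hand, we define
\[
\omega^\epsilon(r)=\min\bigl\{G^{-1}(G(r)+2M),\,D\bigr\},
\]
interpreted as $D$ once $G(r)+2M\ge L$, and then add $r$ if strict monotonicity is wanted. This function is continuous, nondecreasing and majorizes the displacement. Since $G(r)+2M\to-\infty$ as $r\to0^+$ and $G^{-1}(s)\to0$ as $s\to-\infty$, we get $\omega^\epsilon(0^+)=0$, which proves \eqref{lipschitz} uniformly in $t$.
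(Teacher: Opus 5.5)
Your argument is the paper's own: Chebyshev applied to $x\mapsto\int_0^T k(\tau,x)\,\dd\tau$, then $\omega^\epsilon(r)=G^{-1}(G(r)+C)$. Your capping of $\omega^\epsilon$ by a displacement bound (already $2\lambda$, since $U_{\epsilon,R}=V_{\lambda,R}$) repairs a point the paper glosses over: $G(z)=-\frac1d\Psi(z^{-d})+\text{const}$ is bounded above because $\Psi(0)<\infty$, so the paper's formula for $\omega^\epsilon$ is undefined for large $r$. Your aside about the inequality holding only for almost every pair is not needed, since the theorem gives it for all $x,y\in U_{\epsilon,R}$. As stated it would also be wrong in general, because a null set of pairs need not be contained in $(N\times B_R)\cup(B_R\times N)$ for a null set $N$.
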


The significance of \Cref{main} lies in the fact that both \(G\) and \(k\) are determined by the modulus of integrability of \(D\boldsymbol b\). Once this dependence is made explicit, one obtains a quantitative modulus of continuity of the form
\[
\omega^\epsilon(r)=G^{-1}\bigl(G(r)+C/\epsilon\bigr).
\]
This gives rise to a variety of Lusin continuity regimes, ranging from familiar H\"older and log-Lipschitz behavior to substantially weaker Osgood moduli. The following corollary records several representative examples.

\begin{corollary}\label{examples}
Under the hypotheses of \Cref{main}, and with the notation of \Cref{lusin}, let $\epsilon>0$. Then, for constants $C_\epsilon\to\infty$ and $\alpha_\epsilon\to0$ as $\epsilon\to0$, depending on $\boldsymbol b(t,x)$, the following assertions hold.
\begin{enumerate}
    \item If $|D\boldsymbol{b}| \log_+ (|D \boldsymbol{b}|)^b \in L^1_{\loc}$ for some $b \in (0,1)$, then $\boldsymbol{X}(t,0,\cdot)$ is Hölder continuous on $\Omega_{\epsilon,R}$ for any exponent $0<\alpha<1$. More precisely, if $D\boldsymbol{b} \in L^1((0,T); L \log^{b} L_{\loc}(\mathbb R^d))$, then $\omega_1^\epsilon(z) \leq C_{\alpha,\epsilon} |z|^\alpha$.

    \item If $|D\boldsymbol{b}| \log_+ (|D \boldsymbol{b}|)/(1+\log_+^{(n)} |D\boldsymbol{b}|)\in L^1_{\loc}$, where $n\geq2$ and $\log^{(n)}$ denotes the $n$-fold iterated logarithm, then $\boldsymbol{X}(t,0,\cdot)$ is $\log^{(n-1)}$-Lipschitz on $\Omega_{\epsilon,R}$. More precisely, if $D\boldsymbol{b} \in L^1((0,T); L \frac{\log}{\log^{(n)}} L_{\loc}(\mathbb R^d))$, then $\omega_2^\epsilon(z) \leq C_\epsilon |z| \log^{(n-2)}(|\log z|)^{1/\alpha_\epsilon}$.

    \item If $|D\boldsymbol{b}| \log_+ \log_+ (|D \boldsymbol{b}|)\in L^1_{\loc}$, then $\boldsymbol{X}(t,0,\cdot)$ is Hölder continuous on $\Omega_{\epsilon,R}$, but the Hölder exponent degenerates as $\epsilon\to0$. More precisely, if  $D \boldsymbol{b} \in L^1((0,T); L \log\log_{\loc} L(\mathbb R^d))$, then $\omega_3^\epsilon(z) \leq C_\epsilon |z|^{\alpha_\epsilon}$ as $z\to0$.

    \item If $|D\boldsymbol{b}| \log_+^{(n)} (|D \boldsymbol{b}|)\in L^1_{\loc}$ for some $n\geq 2$, or more precisely $D \boldsymbol{b} \in L^1((0,T); L \log^{(n)}_{\loc} L(\mathbb R^d))$, then
    \[
    \omega_4^\epsilon(z) \leq \bigl(\exp^{(n-1)}(\alpha_\epsilon \log^{(n-1)}(z^{-d}))\bigr)^{-1/d} \text{ as $z\to0$},
    \]
    where $\exp^{(n-1)}$ denotes the inverse of $\log^{(n-1)}$.
\end{enumerate}
\end{corollary}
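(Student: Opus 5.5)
The plan is to deduce each item from \Cref{lusin} and the explicit form $\omega^\epsilon(r)=G^{-1}(G(r)+C/\epsilon)$. The first step is to make the construction behind \Cref{main} explicit for each Orlicz class. I expect $G$ to be built from the Young-type function $\Phi$ governing the integrability $\Phi(|D\boldsymbol b|)\in L^1_{\loc}$, through a slowly varying weight $\ell$ with $\Phi(s)\approx s\,\ell(s)$. The natural normalization is an Osgood primitive of the form
\[
G(r)=-\int_r^{1}\frac{\dd\rho}{\rho\,\ell(\rho^{-d})}
\]
for small $r$; the exponent $d$ enters through volume scaling of balls. Here $k$ is the corresponding weighted maximal function of $D\boldsymbol b$, and its integral is bounded by Chebyshev's inequality off a set of measure at most $\epsilon$, which is where $C/\epsilon$ comes from. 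For each case I will identify $\ell$: (1) $\ell(s)=\log^b s$, (2) $\ell(s)=\log s/\log^{(n)}s$, (3) $\ell(s)=\log\log s$, (4) $\ell(s)=\log^{(n)}s$. I will then compute $G$ asymptotically as $r\to0$ and invert the relation $G(\omega)=G(r)+C_\epsilon$.

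The computations are as follows. In case (1), $G(r)\sim -c|\log r|^{1-b}$. Then $|\log\omega|^{1-b}=|\log r|^{1-b}-C_\epsilon$, so $|\log \omega|\geq |\log r|-C'_\epsilon|\log r|^{b}$, since $(x^{1-b}-C)^{1/(1-b)}\ge x-C'x^{b}$. Because $b<1$, $C'_\epsilon|\log r|^b\le(1-\alpha)|\log r|+C_{\alpha,\epsilon}$, which gives $\omega\le C_{\alpha,\epsilon}r^\alpha$ for every $\alpha<1$.

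In case (3), $G(r)\sim-|\log r|/\log|\log r|$ up to constants. Inverting gives $|\log\omega|\gtrsim|\log r|\cdot(\text{ratio})$. I have not yet checked whether this ratio is bounded below by a positive constant $\alpha_\epsilon$, so it remains to be confirmed that a Hölder modulus with degenerating exponent results.

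In case (2), $G(r)\sim -c\,\log^{(n-1)}|\log r|$, up to lower-order terms obtained by substituting $u=\log(\rho^{-d})$ and integrating $\log^{(n-1)}u/u\cdot$ Jacobian factors. Exponentiating $n-1$ times gives the multiplicative loss $\log^{(n-2)}(|\log r|)^{1/\alpha_\epsilon}$.

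In case (4), $G(r)=-\int^{\log(r^{-d})} du/(d\,\log^{(n)}e^{u}\cdots)$. I will rewrite it in the variable $\log^{(n-1)}(r^{-d})$ and arrange that $G$ is comparable to $-\log\log^{(n-1)}(r^{-d})$ modulo constants. An additive shift by $C_\epsilon$ then becomes multiplication of $\log^{(n-1)}(r^{-d})$ by $\alpha_\epsilon=e^{-C_\epsilon}$, and inverting yields the stated bound.

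Justifying these asymptotic forms is the main obstacle, because the normalizations in $G$ (the exact weight and the lower-order factors from Karamata's theorem) determine whether the additive constant becomes a power, a multiplicative factor, or an exponent. For that step I would use Karamata's integral theorem for slowly varying functions, $\int^x \ell(u)\,du/u \sim$ explicit, together with uniform convergence $\ell(\lambda s)/\ell(s)\to1$. These let me replace $G$ by its model expression up to $o(1)$ additive errors, which the constant $C_\epsilon$ absorbs. The remaining estimates are elementary inequalities for the inverse functions, valid as $z\to0$. For $z$ bounded away from $0$, the bound is trivial since $\boldsymbol X(t,0,\cdot)$ maps bounded sets into a bounded set (on $U_{\epsilon,R}$) after enlarging $C_\epsilon$.
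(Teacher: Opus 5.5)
\textbf{The gap is in your first step: you attach the wrong weight to $\Phi$.} You take $g(\rho)=\ell(\rho^{-d})$ with $\ell(s)=\Phi(s)/s$, so that $G(r)=-\frac1d\int_1^{r^{-d}}\frac{\dd s}{s\,\ell(s)}$. The construction behind \Cref{main} (see \Cref{quantitative}, \eqref{definitiong}--\eqref{definitionG}) instead takes $g(s^{-1/d})=1/(s\Psi'(s))$ with $\Psi=\ell$. This gives exactly $G(r)=-\frac1d\Psi(r^{-d})+\mathrm{const}$, so it is $\ell'$, not $1/\ell$, that enters $G$.

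This is not a normalization issue. The only source of integrability for $k=C\,\operatorname{M}^g(D\boldsymbol b)\circ\boldsymbol X$ is the Wiener bound of \Cref{constructiong}. That bound requires $z\int^z\frac{\dd s}{s\,g(s^{-1/d})}\lesssim 1+\Phi_0(z)$, and the paper's $g$ makes the left side equal to $\Phi(z)=z\ell(z)$. With your weight the left side is $z\int^z\frac{\dd s}{s\,\ell(s)}$. This behaves like $z\log z/\log\log z$ in case (3), like $z\log z/\log^{(n)}z$ in case (4), and like $z\log^{1-b}z$ in case (1). In cases (3) and (4), and in case (1) when $b<1/2$, it is not dominated by $z\ell(z)$. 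So for your $G$ you have no integrable $k$, the estimate \eqref{inequalitycrude} is not available, and what you compute is not the modulus $\omega^\epsilon$ of \Cref{lusin}.

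\textbf{Consequences for the individual cases.}
\begin{itemize}
\item Case (3). Your $G\approx-|\log r|/\log|\log r|$ would give, after the shift, $|\log\omega|\approx|\log r|-C\log|\log r|$. That is a log-Lipschitz modulus which the argument does not justify, and this is why you could not close the step. With the correct $G=-\frac1d\log\log(r^{-d})+\mathrm{const}$, the relation $\Psi(\omega^{-d})=\Psi(r^{-d})-dC_\epsilon$ reads $\log(\omega^{-d})=e^{-dC_\epsilon}\log(r^{-d})$. That is, $\omega=r^{\alpha_\epsilon}$ with $\alpha_\epsilon=e^{-dC_\epsilon}$.
\item Case (4). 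With the correct $G$ the computation is identical to case (3), with one more exponentiation. Your stated asymptotics do not follow from your own formula, which gives $G\approx-\frac1d\log(r^{-d})/\log^{(n)}(r^{-d})$, not $-\log\log^{(n-1)}(r^{-d})$.
\item Case (2). Again your stated asymptotics do not follow from your formula, which gives $G\approx-\frac{1}{2d}(\log\log r^{-d})^2$ when $n=2$. Worse, taking $G\sim-c\log^{(n)}(1/r)$ at face value, an additive shift gives $\log^{(n-1)}(1/\omega)=e^{-C/c}\log^{(n-1)}(1/r)$. That is a case-(4)-type modulus, not $r\,\log^{(n-1)}(1/r)^{1/\alpha_\epsilon}$. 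You have conflated $G$ with $g$.
\item Case (1). Your conclusion survives only because $b$ and $1-b$ both lie in $(0,1)$; the correct $G$ is $\approx-c|\log r|^{b}$.
\end{itemize}

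\textbf{How the multiplicative loss in (1) and (2) actually arises.} It comes from $g$, not $G$. Apply the mean value theorem to $\int_{\log r}^{\log\omega}\frac{\dd v}{g(e^v)}=C_\epsilon$ and use that $g$ is nonincreasing near $0$; this gives $\omega\le r\exp\bigl(C_\epsilon g(r)\bigr)$. With $g_1(r)\sim c\log^{1-b}(1/r)$ and $g_2(r)\sim\log^{(n)}(1/r)$ this yields (1) and (2).

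\textbf{No Karamata step is needed.} Once the correct $G$ is used, the Karamata-type $o(1)$ approximation you planned is unnecessary. After the modification in \Cref{onPhiandPhi0} one has $\Psi=\Psi_0$ exactly for large arguments, so the relation between $\omega$ and $r$ is exact for small $r$.
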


The estimates in \Cref{main} and \Cref{lusin} provide upper bounds for the separation of trajectories and for the corresponding moduli of continuity. It is therefore natural to ask whether the scale determined by \(G\) can actually be attained, or whether it is merely an artifact of the argument. The following example shows that the predicted continuity class is, in general, genuine.

\begin{proposition} \label{exampleprop}
    Let $T>0$ be arbitrarily large. There exists an autonomous bounded vector field \(\boldsymbol{b}\) satisfying \eqref{regularity}, \eqref{growth}, and \eqref{compressibility} such that, for every \(R>0\), for $\epsilon>0$ sufficiently small, and for the function \(G\) given by \Cref{main} one has
    \[
    G\bigl(|\boldsymbol{X}(T,0,x)-\boldsymbol{X}(T,0,y)|\bigr)
    =
    G(|x-y|)+T
    \]
    for some \(x,y\in \Omega_{\epsilon,R}\). Consequently, the modulus of continuity of \(\boldsymbol{X}(t,0,\cdot)\) on \(\Omega_{\epsilon,R}\) is in the class predicted by \Cref{lusin}.
\end{proposition}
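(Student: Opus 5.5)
We will build an explicit autonomous, bounded, divergence-controlled vector field whose flow saturates the Osgood inequality \eqref{inequalitycrude}. The guiding idea is the classical Osgood example: take a scalar profile \(\theta:[0,\infty)\to[0,\infty)\), concave near \(0\), with \(\theta(0)=0\), \(\int_{0^+}\dd r/\theta(r)=\infty\), and with \(\theta'\) in a prescribed Orlicz class but not in any better one. Concretely we will take \(\theta(r)=r\,\ell(r)\) near \(0\), where \(\ell\) is slowly varying in the sense of Karamata and \(\ell(r)\to\infty\) as \(r\to0\); then \(|\theta'(r)|\sim\ell(r)\). We then define, in \(\mathbb R^d\), a field that is radial (or hyperbolic) near the origin, for instance \(\boldsymbol b(x)=\theta(|x_1|)\,\mathrm{sgn}(x_1)\,e_1\,\chi(x)\) with \(\chi\) a smooth cutoff, so that \(\boldsymbol b\) is bounded and compactly supported, hence \eqref{growth} holds trivially. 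We have \(|D\boldsymbol b|\lesssim \ell(|x_1|)+1\), which lies in \(L^1_{\loc}\) precisely because \(\ell\) is slowly varying (\(\int_0 \ell(r)\,\dd r<\infty\)). Since \(\Div\boldsymbol b=\theta'(|x_1|)\chi+\text{bounded}\geq -C\) near the origin, \eqref{compressibility} also holds.

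The key step is to match the function \(G\) produced by \Cref{main} for this \(\boldsymbol b\). The construction of \(G\) from the modulus of integrability of \(D\boldsymbol b\) (through the weighted maximal operators built from slowly varying functions) should yield, for this specific \(\ell\), \(G(r)=\int_1^r \dd\rho/\theta(\rho)\) up to an additive constant, possibly after tuning \(\ell\) in terms of the integrability modulus; alternatively, we choose \(\theta\) so that \(\theta(r)=1/G'(r)\) for the \(G\) associated with \(\ell\). With this choice the ODE in the \(x_1\) direction, \(\dot r=\theta(r)\), integrates exactly to \(G(r(t))=G(r(0))+t\). We then pick \(x=(x_1,x')\) and \(y=(-x_1,x')\) symmetric about the hyperplane \(\{x_1=0\}\), and small enough that both trajectories remain in the region where \(\chi\equiv1\) up to time \(T\); this is possible for any \(T\) because \(G(0+)=-\infty\). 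The separation \(2r(t)\) then satisfies \(G(2r(T))=G(2r(0))+T\) after the harmless rescaling \(\theta(2r)=2\theta(r)\) up to the slowly varying error, which we absorb by defining the field with \(\theta(|x_1|)\) replaced by \(\tfrac12\theta(2|x_1|)\).

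It remains to ensure that \(x,y\in\Omega_{\epsilon,R}\). Since \(\boldsymbol b\) is bounded, \Cref{main} gives \(U_{\epsilon,R}=B_R\). The good set of \Cref{lusin} consists of points where \(\int_0^T k(\tau,\cdot)\,\dd\tau\) is bounded by \(C/\epsilon\). We must therefore exhibit symmetric pairs near \(\{x_1=0\}\) at which the maximal-type function \(k\) is controlled. This is the main obstacle: \(k\) is large near the singular hyperplane. We will handle it by estimating \(k\) explicitly for this one-dimensional profile. The weighted maximal function of \(\ell(|x_1|)\) at height \(|x_1|=r\) should be comparable to \(\ell(r)\), and along trajectories where \(\theta(r)\,\ell(r)\)-type contributions are integrable in time, its time integral stays bounded by a constant depending on \(T\) only. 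Taking \(\epsilon\) small, but with \(C/\epsilon\) exceeding that constant, places the pair in \(\Omega_{\epsilon,R}\). Once \(x,y\) lie in \(\Omega_{\epsilon,R}\), the modulus \(\omega^\epsilon(r)=G^{-1}(G(r)+C_\epsilon)\) is attained up to constants, which gives the final assertion.
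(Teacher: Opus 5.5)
Your construction is a legitimate variant of the paper's. The paper uses the radial field $\boldsymbol b(x)=g(|x|)\eta(|x|)\,x$ and compares a point $x$ with the fixed point $0$. You use a slab field in the $x_1$-direction, a symmetric pair, and the rescaling $\tfrac12\theta(2|x_1|)$, which gives the identity exactly, with no error to absorb.

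Of your two options for matching $G$, only the second works. The $G$ of \Cref{main} is not canonically attached to $\boldsymbol b$. You must fix an admissible $\Phi$, set $\theta(r)=rg(r)=1/G'(r)$ (so $\ell=g$), and then check that this $\Phi$ is a legitimate choice for your field. That means verifying $\Phi_0(|D\boldsymbol b|)\in L^1_{\loc}$ and $|\theta'|\lesssim g$. Both follow from the normalized slow variation of $g$, the slow variation of $g\Psi(g)$ at $0$, and Potter's bounds, as in Steps 1 and 4 of the paper. You do not carry these checks out, but they are routine.

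The genuine gap is at the step you yourself flag as the main obstacle. You claim the weighted maximal function of $\ell(|x_1|)$ at height $r$ is comparable to $\ell(r)$. This is false. Moreover, if it were true your conclusion would not follow. Along $\dot r=r\ell(r)$ one gets $\int_0^T\ell(r(\tau))\,\dd\tau=\int_{r(0)}^{r(T)}\dd r/r=\log(r(T)/r(0))$. This is at least $T\,\ell(r(T))$, so it tends to $+\infty$ as the pair approaches $\{x_1=0\}$. It is therefore not bounded by a constant depending only on $T$.

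The missing idea, which is the heart of the paper's Step 5, is that the weight $1/g(\rho)$ in $\operatorname{M}^g$ exactly compensates the singularity $|D\boldsymbol b|\sim g$. Hence $\operatorname{M}^g(D\boldsymbol b)\in L^\infty(\mathbb R^d)$. In your geometry this is easy. Take $|x_1|=s$ and radii in the range where $g$ is nonincreasing.
\begin{enumerate}
\item For $\rho\le s/2$, monotonicity gives $g(|y_1|)\le g(\rho)$ on $B_\rho(x)$.
\item For $\rho>s/2$, Karamata's theorem gives $\int_{B_\rho(x)}g(|y_1|)\,\dd y\le C\rho^{d-1}\int_0^{3\rho}g\le C\rho^d g(\rho)$.
\end{enumerate}
Larger radii are harmless because $g\ge\gamma>0$ and $D\boldsymbol b\in L^1$. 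So $k$ is bounded, $\Omega_{\epsilon,R}=B_R$ for small $\epsilon$, and every symmetric pair at every scale qualifies.

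Without this bound you can salvage only the literal ``for some $x,y$''. You would fix one pair with $\int_0^T k<\infty$ and then shrink $\epsilon$. That witnesses saturation at a single scale for each $\epsilon$. It does not give the final assertion that the modulus on a fixed $\Omega_{\epsilon,R}$ behaves like $G^{-1}(G(r)+c)$ as $r\to0$. Under your own estimate, the small-separation pairs near $\{x_1=0\}$ would be expelled from the good set.
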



Two further counterexamples highlight complementary sharpness features of our result. In \Cref{counter}, we show that the stronger Lipschitz-type estimate \eqref{estimate} may fail at the endpoint \(p=1\), even for a bounded autonomous vector field with integrable derivative. In \Cref{counter2}, we prove that the Osgood estimate \eqref{estimatenew} does not extend, in the same form, from the Sobolev class \(W^{1,1}_{\loc}\) to the \(BV_{\loc}\) setting. Thus, the examples in Section~\ref{section3} establish three distinct facts: the continuity class predicted by our theory can be attained, the classical \(p>1\) estimate may fail at the endpoint, and the Sobolev and \(BV\) regimes exhibit genuinely different behavior.

We also derive applications to the regularity of solutions of the transport equation and to quantitative mixing. Since their formulation requires additional notation and background, these results are presented separately in Sections~\ref{SecReg} and~\ref{SecMix}.

\subsection{A glimpse of the proof of \texorpdfstring{\Cref{main}}{}}

We now briefly explain the main ideas in the proof of \Cref{main}. For simplicity, we again assume \eqref{linfty}.

We begin by recalling the derivation of \eqref{estimate}, which closely parallels the classical Cauchy--Lipschitz theory. In the elementary case
\(\boldsymbol b\in L^1((0,T);W^{1,\infty}_{\loc}(\mathbb R^d))\), local Lipschitz continuity of the flow follows immediately from
\begin{align}
    \frac{\dd}{\dd t}
    |\boldsymbol X(t,0,x)-\boldsymbol X(t,0,y)|
    &\leq
    |\boldsymbol b(t,\boldsymbol X(t,0,x))
      -\boldsymbol b(t,\boldsymbol X(t,0,y))|
    \nonumber\\
    &\leq
    \|\nabla\boldsymbol b(t,\cdot)\|_
    {L^\infty(B_R+2T\|\boldsymbol b\|_\infty)}
    |\boldsymbol X(t,0,x)-\boldsymbol X(t,0,y)|,
    \label{lipschitzineq}
\end{align}
where \(x,y\in B_R\). Gronwall's inequality then gives
\[
|\boldsymbol X(t,0,x)-\boldsymbol X(t,0,y)|
\leq
\exp\left(
    \int_s^t
    \|\nabla\boldsymbol b(\tau,\cdot)\|_
    {L^\infty(B_R+2T\|\boldsymbol b\|_\infty)}
    \,\dd\tau
\right)
|\boldsymbol X(s,0,x)-\boldsymbol X(s,0,y)|.
\]

The argument of Bru\`e and Nguyen \cite{bruenguyen} for proving \eqref{estimate} under \eqref{assumptionp1} consists in replacing the classical mean-value inequality by the Lusin mean-value estimate
\[
|\boldsymbol b(t,x)-\boldsymbol b(t,y)|
\leq
C_d\bigl(
    \operatorname M_\lambda(D\boldsymbol b)(t,x)
    +
    \operatorname M_\lambda(D\boldsymbol b)(t,y)
\bigr)|x-y|,
\]
valid whenever \(|x-y|<\lambda\). Here,
\[
(\operatorname M_\lambda f)(x)
:=
\sup_{0<r<\lambda}
\frac{1}{\mathcal L^d(B_r)}
\int_{B_r}|f(x+y)|\,\dd y
\]
denotes the truncated Hardy--Littlewood maximal operator. Applying this estimate along two trajectories yields
\begin{align*}
    \frac{\dd}{\dd t}
    |\boldsymbol X(t,0,x)-\boldsymbol X(t,0,y)|
    &\leq
    C_d\bigl(
        \operatorname M_{R+2T\|\boldsymbol b\|_\infty}
        (D\boldsymbol b)(t,\boldsymbol X(t,0,x))
        \\
    &\qquad\quad+
        \operatorname M_{R+2T\|\boldsymbol b\|_\infty}
        (D\boldsymbol b)(t,\boldsymbol X(t,0,y))
    \bigr)
    |\boldsymbol X(t,0,x)-\boldsymbol X(t,0,y)|.
\end{align*}
Thus, \eqref{estimate} follows from Gronwall's inequality with
\[
k(\tau,x)
=
C_d\operatorname M_{R+2T\|\boldsymbol b\|_\infty}
(D\boldsymbol b)(\tau,\boldsymbol X(\tau,0,x)).
\]
The strong \(L^p\)-boundedness of the maximal operator implies that
\(k\in L^1((0,T);L^p_{\loc}(\mathbb R^d))\).

At the endpoint \(p=1\), this reasoning encounters an integrability obstruction. Indeed, the classical Wiener--Stein characterization gives, locally,
\[
\operatorname M_\lambda u\in L^1_{\loc}
\quad\Longleftrightarrow\quad
u\in L\log L_{\loc}.
\]
Consequently, the estimate \eqref{estimate} with
\(k\in L^1((0,T);L^1_{\loc}(\mathbb R^d))\) remains available when
\(D\boldsymbol b\in L^1((0,T);L\log L_{\loc}(\mathbb R^d))\), but the classical maximal operator does not cover the full DiPerna--Lions class. In particular, it leaves open the case in which \(D\boldsymbol b\) is locally integrable but does not belong to \(L\log L_{\loc}\).

Our principal contribution is to replace the truncated Hardy--Littlewood maximal operator by the weighted maximal operator
\[
(\operatorname M_\lambda^g f)(x)
:=
\sup_{0<r<\lambda}
\frac{1}{g(r)\mathcal L^d(B_r)}
\int_{B_r}|f(x+y)|\,\dd y,
\]
where \(g:\mathbb R_+\to\mathbb R_+\) is chosen so as to improve the integrability of the maximal function. For weights such as
\(g(r)=\log(1+1/r)\), one is led to a weighted Lusin mean-value inequality of the form
\begin{equation}
    |f(x)-f(y)|
    \leq
    C_{d,g}
    \bigl(
        (\operatorname M_\lambda^g Df)(x)
        +
        (\operatorname M_\lambda^g Df)(y)
    \bigr)
    g(|x-y|)|x-y|.
    \label{meanvalueineq}
\end{equation}
Applying \eqref{meanvalueineq} to the vector field along two trajectories gives the Osgood-type differential inequality
\begin{align*}
    \frac{\dd}{\dd t}
    |\boldsymbol X(t,0,x)-\boldsymbol X(t,0,y)|
    &\leq
    C_d\bigl(
        \operatorname M_{R+2T\|\boldsymbol b\|_\infty}^g
        (D\boldsymbol b)(t,\boldsymbol X(t,0,x))
        \\
    &\qquad\quad+
        \operatorname M_{R+2T\|\boldsymbol b\|_\infty}^g
        (D\boldsymbol b)(t,\boldsymbol X(t,0,y))
    \bigr)
    \\
    &\qquad\qquad\times
    g\bigl(|\boldsymbol X(t,0,x)-\boldsymbol X(t,0,y)|\bigr)
    |\boldsymbol X(t,0,x)-\boldsymbol X(t,0,y)|.
\end{align*}
Separating variables leads to \eqref{estimatenew}, with
\[
G(s):=\int_a^s\frac{\dd u}{u g(u)}
\]
and
\[
k(\tau,x)
:=
C_{d,g}\operatorname M_{R+2T\|\boldsymbol b\|_\infty}^g
(D\boldsymbol b)(\tau,\boldsymbol X(\tau,0,x)).
\]
This produces a meaningful estimate provided that \(g\) satisfies the Osgood condition
\begin{equation}
    \int_0^a\frac{\dd r}{r g(r)}=\infty
    \label{osgoodcond}
\end{equation}
and that \(k\in L^1((0,T);L^1_{\loc}(\mathbb R^d))\). For example, when
\(g(r)=\log(1+1/r)\), condition \eqref{osgoodcond} is immediate, while the required integrability of \(k\) follows from
\(D\boldsymbol b\in L^1((0,T);L\log\log L_{\loc}(\mathbb R^d))\).

In spite of its informality, the preceding discussion identifies three ingredients required by the argument:
\begin{itemize}
    \item a weighted Lusin mean-value inequality of the form \eqref{meanvalueineq};
    \item a Wiener-type estimate ensuring that
    \(\Phi(|u|)\in L^1_{\loc}\) implies
    \(\operatorname M_\lambda^g u\in L^1_{\loc}\), for a suitable superlinear function \(\Phi\);
    \item the Osgood condition \eqref{osgoodcond}.
\end{itemize}

As we shall see, the weighted mean-value inequality naturally leads to weights \(g\) that are slowly varying at the origin in the sense of Karamata. This is an important class of functions, arising in several applications of analysis and probability; see \cite{BGT}.
The Wiener-type estimate, in turn, requires the monotonicity of $g(r)$ near $r=0$ and is associated with the function
\[
\Phi(s)
=
s\int^s\frac{\dd r}{r g(r^{-1/d})}.
\]
The Osgood condition is then equivalent to the superlinearity of \(\Phi\), i.e., that the information $\Phi(u) \in L^1_{\loc}$ is nontrivial.

One may also reverse this perspective and construct \(g\) from \(\Phi\). By the de la Vall\'ee Poussin theorem (see, for instance, \cite[Theorem 6.19]{superlinear}), for every \(u\in L^1(\mathbb R^d)\) there exists a continuous increasing function
\begin{equation}
    \Phi(0)=0
    \qquad\text{(``normalization'')},
    \label{normalization}
\end{equation}
\begin{equation}
    \lim_{t\to\infty}\frac{\Phi(t)}{t}=\infty
    \qquad\text{(``superlinearity'')},
    \label{superlinearity}
\end{equation}
and
\begin{equation}
    \int_{\mathbb R^d}\Phi(|u|)\,\dd x<\infty
    \qquad\text{(``\(\Phi\) integrates \(u\)'')}.
    \label{integrability}
\end{equation}
Thus, given
\(\boldsymbol b\in L^1((0,T);W^{1,1}_{\loc}(\mathbb R^d))\), the problem reduces to constructing, from a superlinear function \(\Phi\) adapted to the integrability of \(D\boldsymbol b\), a weight \(g\) for which the three ingredients above hold. 

This leaves one fundamental question: for an arbitrary vector field in the DiPerna--Lions class, can one always choose a sufficiently slowly growing superlinear function \(\Phi\) so that this construction can be carried out? The answer is affirmative and is provided by \Cref{slowdown1}. This is what ultimately allows us to prove \Cref{main} under the sole Sobolev assumption \eqref{regularity}.

\begin{remark}\label{remarkLiLuo}
    \textnormal{An abstract Osgood--Lusin framework for regular Lagrangian flows was previously developed by Li and Luo \cite{osgoodflow}. More precisely, they assume that there exist an Osgood modulus \(\rho:[0,\infty)\to[0,\infty)\) and a nonnegative function
    \(f\in L^1((0,T);L^1_{\loc}(\mathbb R^d))\) such that
    \[
    |\boldsymbol b(t,x)-\boldsymbol b(t,y)|
    \leq
    \bigl(f(t,x)+f(t,y)\bigr)\rho(|x-y|)
    \]
    for almost every \(x,y\in\mathbb R^d\) and almost every \(t\in(0,T)\). Here, the Osgood condition means that
    \(\int_0^1\rho(s)^{-1}\,\dd s=\infty\). Under this structural hypothesis, together with suitable boundedness and compressibility assumptions, they establish existence, uniqueness, and Lusin-type regularity of the associated flow. This approach was further expanded very recently by De Philippis and Franchi \cite{dpf}; see also \cite{ck} for another very recent development in the Osgood theory.}

    \textnormal{Our result may be viewed as an intrinsic Sobolev realization of this framework. Indeed, we do not assume an Osgood--Lusin estimate on \(\boldsymbol b\) \textit{a priori}. Starting instead from the DiPerna--Lions regularity \eqref{regularity}, we construct a slowly varying weight \(g\), adapted to the integrability of \(D\boldsymbol b\), such that
    \[
    |\boldsymbol b(t,x)-\boldsymbol b(t,y)|
    \leq
    C_{d,g}|x-y|g(|x-y|)
    \bigl(
        \operatorname M_\lambda^g(D\boldsymbol b)(t,x)
        +
        \operatorname M_\lambda^g(D\boldsymbol b)(t,y)
    \bigr);
    \]
    see \Cref{meanvalue}. Thus, in the notation of Li and Luo, one may take
    \(\rho(r)=r g(r)\) and
    \(f(t,x)=C_{d,g}\operatorname M_\lambda^g(D\boldsymbol b)(t,x)\). The Wiener-type estimate developed in \Cref{wienersubsection} guarantees the required local integrability of \(f\), while the construction in \Cref{constructiong} relates the resulting Osgood modulus explicitly to the integrability properties of \(D\boldsymbol b\). In this way, the Osgood--Lusin condition becomes a consequence of Sobolev regularity rather than an additional hypothesis on the vector field.}
\end{remark}

\subsection{Structure of the paper.}
In Section~2, we introduce the \(g\)-maximal operator and establish its fundamental properties. Section~3 is devoted to the proof of \Cref{main}: we first derive a quantitative result under additional assumptions and then remove these restrictions through the construction in \Cref{slowdown1}. In Section~4, we compute the moduli of continuity appearing in \Cref{examples} and provide examples and counterexamples describing the sharpness and the range of validity of \Cref{lusin} and \eqref{inequalitycrude}. Section~\ref{SecReg} concerns the regularity of solutions to the transport equation \eqref{transport}, while Section~\ref{SecMix} develops applications to quantitative mixing. Finally, in Section~\ref{SecFinal}, we discuss several open problems related to our results.

\subsection{Some notation}\label{section:notation}

We shall often formulate our results in the language of Orlicz spaces, which are generalizations of the classical \(L^p\)-spaces; see, e.g., \cite{Rao}. We therefore briefly fix the notation that will be used throughout the paper.

For our purposes, it will be sufficient to consider finite superlinear Young functions, namely continuous convex functions
\(
\Phi_0:[0,\infty)\to[0,\infty)
\)
satisfying \eqref{normalization} and \eqref{superlinearity}. If \((X,\mu)\) is a \(\sigma\)-finite measure space, we define the Orlicz space \(L^{\Phi_0}(X)\) by
\[
L^{\Phi_0}(X)
=
\left\{
f:X\to\mathbb R :
f \text{ measurable and there exists } \lambda>0
\text{ such that }
\int_X \Phi_0\bigl(\lambda^{-1}|f|\bigr)\,\dd\mu<\infty
\right\}.
\]
On \(L^{\Phi_0}(X)\) we consider the Luxemburg norm
\[
\|f\|_{L^{\Phi_0}(X)}
=
\inf\left\{
\lambda>0 \,;\,
\int_X \Phi_0\bigl(\lambda^{-1}|f(x)|\bigr)\,\dd\mu(x)\leq 1
\right\},
\]
under which \(L^{\Phi_0}(X)\) becomes a Banach space.


The Bochner spaces \(L^1((0,T);L^{\Phi_0}(X))\) are defined in the usual way; see, for example, \cite{evans}. If $X \subset \mathbb R^d$, then the local space $L^{\Phi_0}_{\loc}(X)$ is also defined as usual.

\subsection{Acknowledgments.}
This study was financed in part by the São Paulo Research Foundation (FAPESP) (Process Number 2024/21041-1, Process Number	
2025/12847-5, and Process Number 2023/13426-8) and by the Pró-Reitoria de Pesquisa e Inovação of the University of São Paulo through the ``Programa de Apoio a Novos Docentes'' (Grant 22.1.09345.01.2). 
We would like to express our gratitude to our dear friend Christian Táfula for numerous insightful discussions and for acquainting us with the theory of functions of regular variation and to professor Luigi Ambrosio for bibliography recommendations and careful reading of the article.

\section{The \texorpdfstring{$g$}{}-maximal operator}\label{section:maximal}

In this section we introduce the \(g\)-maximal operator mentioned above and establish the three basic ingredients needed for our approach: a Lusin-type mean value inequality, a Wiener-type integrability bound, and the Osgood condition. We begin with the definition.

\begin{definition}
    Let \(g : \mathbb R_+ \to \mathbb R_+\) be a positive continuous function, and let \(\lambda\in\mathbb R_+ \cup \{\infty\}\). We define the \(g\)-maximal operator \(\operatorname{M}_\lambda^g\) by
    \begin{equation}
        (\operatorname{M}_\lambda^g f)(x)\coloneqq \sup_{r\in(0,\lambda)}\frac{1}{g(r)\mathcal{L}^d(B_r)}\int_{B_r(x)}|f(y)|\,\dd y,
    \end{equation}
    where \(f \in L^1_{\loc}(\mathbb R^d)\).
\end{definition}

As with the classical Hardy--Littlewood maximal operator, \(\operatorname{M}_\lambda^g f\) is measurable; indeed, it may be written as the supremum over rational radii of measurable averaging operators.

\subsection{The Lusin inequality}\label{lusinsubsection}

We first investigate how the weight appearing in the definition of \(\operatorname{M}_\lambda^g\) leads to weighted mean value inequalities.

\begin{lemma}\label{meanvalue}
    Let \(\lambda>0\). Assume that \(g:\mathbb{R}_+\to \mathbb{R}_+\) satisfies the following condition: there exists \(C_\lambda>0\) such that
\begin{equation}\label{hypothesisg}
    \int_0^1\tau^{d-1}g(z\tau)\,\dd \tau\leq C_\lambda g(z)
\end{equation}
    for every \(z\in (0,\lambda)\).

    Then, for every \(u\in W^{1,1}_{\loc}(\mathbb{R}^d)\) and for almost every \(x,y \in \mathbb R^d\) with \(|x-y|<\lambda\),
    \[
    |u(x)-u(y)|\leq C_\lambda C_d\bigl((\operatorname{M}_{\lambda}^gDu)(x)+(\operatorname{M}_{\lambda}^gDu)(y)\bigr)|x-y|g(|x-y|),
    \]
    where \(C_d>0\) depends only on the dimension.
\end{lemma}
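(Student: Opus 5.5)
We follow the classical proof of the Lusin mean value inequality, keeping track of the radii at which the averages are taken. By density of smooth functions in \(W^{1,1}_{\loc}\) and Lebesgue differentiation, it suffices to prove the inequality for \(u\in C^1\) with \(Du\) replaced by \(|Du|\). The passage to \(W^{1,1}_{\loc}\) uses that \(\operatorname M^g_\lambda(Du_n)\to \operatorname M^g_\lambda(Du)\) pointwise, or at least that the liminf is bounded suitably. The averages over a fixed ball converge under \(L^1_{\loc}\) convergence, hence the supremum is lower semicontinuous, and we may pass to a subsequence along which \(u_n\to u\) a.e.

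Fix \(x,y\) with \(r:=|x-y|<\lambda\), and let \(B:=B_r(x)\cap B_r(y)\), whose measure is comparable to \(\mathcal L^d(B_r)\). Write
\[
|u(x)-u(y)|\le \frac{1}{\mathcal L^d(B)}\int_B|u(x)-u(z)|\,\dd z+\frac{1}{\mathcal L^d(B)}\int_B|u(y)-u(z)|\,\dd z,
\]
and bound each term by an average over \(B_r(x)\) (respectively \(B_r(y)\)), at the cost of a dimensional constant. For the first term we use the standard potential estimate
\[
\frac{1}{\mathcal L^d(B_r)}\int_{B_r(x)}|u(x)-u(z)|\,\dd z\le C_d\int_{B_r(x)}\frac{|Du(w)|}{|x-w|^{d-1}}\,\dd w.
\]
This estimate follows from \(u(x)-u(z)=-\int_0^1 Du(x+t(z-x))\cdot(z-x)\,\dd t\), polar coordinates, and Fubini.

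The key step is to bound the Riesz-type integral by the \(g\)-maximal function. Set \(\mu(\rho):=\int_{B_\rho(x)}|Du|\) and note that \(\mu(\rho)\le \mathcal L^d(B_\rho)g(\rho)(\operatorname M^g_\lambda Du)(x)=\omega_d\rho^d g(\rho)\,\operatorname M^g_\lambda Du(x)\) for \(\rho<\lambda\). Integrating by parts in the radial variable gives
\[
\int_{B_r(x)}\frac{|Du(w)|}{|x-w|^{d-1}}\,\dd w=\int_0^r\rho^{1-d}\,\dd\mu(\rho)=r^{1-d}\mu(r)+(d-1)\int_0^r\rho^{-d}\mu(\rho)\,\dd\rho .
\]
The boundary term at \(0\) vanishes for smooth \(u\). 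For smooth \(u\) one can alternatively skip the integration by parts and split into dyadic annuli. Plugging in the bound for \(\mu\), the first term is at most \(\omega_d r g(r)\operatorname M^g_\lambda Du(x)\). The second term is at most
\[
\omega_d(d-1)\operatorname M^g_\lambda Du(x)\int_0^r g(\rho)\,\dd\rho=\omega_d(d-1)\,r\,\operatorname M^g_\lambda Du(x)\int_0^1 g(r\tau)\,\dd\tau .
\]

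The main obstacle is that this last integral carries the weight \(\tau^0\), while hypothesis \eqref{hypothesisg} has \(\tau^{d-1}\), which is weaker near \(\tau=0\). Hence we should not integrate by parts naively. Instead we work directly in polar coordinates, writing
\[
\int_{B_r(x)}\frac{|Du(w)|}{|x-w|^{d-1}}\,\dd w=\int_0^r\int_{\partial B_\rho}|Du|\,\dd\sigma\,\rho^{1-d}\,\dd\rho,
\]
and then redistribute using the average over \(z\). The more careful form of the potential estimate is
\[
\frac{1}{\mathcal L^d(B_r)}\int_{B_r(x)}|u(x)-u(z)|\,\dd z\le\int_0^1\frac{1}{\mathcal L^d(B_r)}\int_{B_r}|Du(x+t h)|\,|h|\,\dd h\,\dd t .
\]
After the change of variables \(w=th\), the inner average equals \(r\,t^{-d}\mathcal L^d(B_r)^{-1}\int_{B_{tr}(x)}|Du|\,\dd w\), up to the factor \(|h|\le r\). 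This is at most \(r\,g(tr)\,\operatorname M^g_\lambda Du(x)\), since the ratio of ball volumes cancels the factor \(t^{-d}\). This yields \(r\,\operatorname M^g_\lambda Du(x)\int_0^1 g(rt)\,\dd t\).

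To recover the \(\tau^{d-1}\) weight of \eqref{hypothesisg}, we refine the averaging: instead of averaging over full balls, we average over the intersection region \(B\), using the segment parametrization from both endpoints in the style of the cone-averaging lemma of Gilbarg--Trudinger. Alternatively, we decompose \(B_r\) into spherical shells \(\{|h|\approx sr\}\), so that each shell contributes its own weight. If this refinement fails, we use the monotone/slowly varying structure of \(g\) to compare \(\int_0^1 g(rt)\,\dd t\) with \(\int_0^1 t^{d-1}g(rt)\,\dd t\), which is what the later constructions supply. In either case we conclude with the bound \(C_\lambda C_d\, r g(r)\,\operatorname M^g_\lambda Du(x)\), and the symmetric term in \(y\) completes the proof.
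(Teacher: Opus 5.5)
Your diagnosis in the middle of the proposal is correct, but the last paragraph is a genuine gap, and it cannot be filled. Both of your computations give the same thing. One integrates by parts against $\mu(\rho)=\int_{B_\rho(x)}|Du|$; the other substitutes $w=th$, where the Jacobian $t^{-d}$ cancels exactly against $\mathcal L^d(B_{tr})/\mathcal L^d(B_r)=t^d$. Either way you get
\[
\frac{1}{\mathcal L^d(B_r)}\int_{B_r(x)}|u(x)-u(z)|\,\dd z\le r\,(\operatorname{M}^g_\lambda Du)(x)\int_0^1 g(rt)\,\dd t,
\]
with weight $t^0$. Nothing in the lemma lets you trade this for $t^{d-1}$. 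The cone and shell refinements are only named, and the fallback invokes monotonicity or slow variation of $g$, which are not hypotheses of the lemma.

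In fact, for $d\geq 2$ the statement is false under \eqref{hypothesisg} alone. Take $d=2$, $\lambda=1$ and $g(r)=1/r$. Then $\int_0^1\tau\,g(z\tau)\,\dd\tau=1/z=g(z)$, so \eqref{hypothesisg} holds with $C_\lambda=1$. Moreover $|x-y|\,g(|x-y|)=1$ and $(\operatorname{M}^g_1 f)(x)=\sup_{0<r<1}(\pi r)^{-1}\int_{B_r(x)}|f|$. Let $u(w)=\chi(w)\log\log(1/|w|)$, with $\chi$ a cutoff equal to $1$ near $0$ and supported in $B_{e^{-2}}$. Then $u\in W^{1,1}(\mathbb R^2)$ and $|Du(w)|\lesssim (|w|\log(1/|w|))^{-1}$ on $B_{e^{-2}}$. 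Splitting into $\rho\le|p|/2$ and $\rho>|p|/2$, one checks $\int_{B_\rho(p)}|Du|\lesssim\rho$ uniformly in $p\in\mathbb R^2$ and $\rho<1$, so $\operatorname{M}^g_1 Du$ is bounded. Yet $u$ is essentially unbounded near $0$ and vanishes for $|y|>1/4$. The asserted inequality therefore fails for all $x$ in a small ball about $0$ and all $y$ with $1/4<|y|<1/2$, which is a set of positive measure.

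The paper's proof obtains the weight $\tau^{d-1}$ only through an error in its first display. There the factor $1/(1-\tau)$ multiplying $\int_{\Omega_{\tau,x}}|Du|$ should be $(1-\tau)^{-d}$; the two coincide only when $d=1$. Your computation is the correct version of that step.

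What your argument genuinely proves is the lemma with \eqref{hypothesisg} replaced by $\int_0^1 g(z\tau)\,\dd\tau\le C_\lambda g(z)$ for $z\in(0,\lambda)$. That is how you should finish, rather than looking for a refinement. The stronger condition still holds for the continuous slowly varying weights used later, because Potter's bound $g(z\tau)/g(z)\le(1+\epsilon)\tau^{-\epsilon}$ is integrable on $(0,1)$ without the factor $\tau^{d-1}$. So the applications are unaffected.

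Separately, your density step runs in the wrong direction. Lower semicontinuity gives $\operatorname{M}^g_\lambda Du\le\liminf_n\operatorname{M}^g_\lambda Du_n$, which does not let you pass an upper bound to the limit. It is cleaner to pass to the limit in the Riesz-potential inequality, whose right-hand side is linear in $|Du|$ and continuous from $L^1$ into $L^1_{\loc}$. Only afterwards bound the potential by $\operatorname{M}^g_\lambda Du$.
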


\begin{proof}
    This is a natural extension of the classical Lusin mean value inequality. Let us follow, for instance, the argument of \cite[Lemma 2.7]{partiallyregular}. By the fundamental theorem of calculus, one has
    \[
    |u(x)-u(y)|\leq \frac{C_d}{|x-y|^{d-1}}\int_0^1\left[\int_{\Omega_{\tau,x}}\frac{|D u (w)|}{1-\tau}\,\dd w+\int_{\Omega_{\tau,y}}\frac{|D u (w)|}{1-\tau}\,\dd w\right]\,\dd \tau,
    \]
    where \(\Omega_{\tau,z}=B_{(1-\tau)|x-y|}(z)\). On the other hand, by the definition of the \(g\)-maximal operator,
    \[
    \int_0^1\int_{\Omega_{\tau,x}}\frac{|D u (w)|}{1-\tau}\,\dd w\,\dd \tau
    \leq
    C_d|x-y|^d \left(\int_0^1 (1-\tau)^{d-1}g(|x-y|(1-\tau))\,\dd \tau \right) (\operatorname{M}_{\lambda}^gDu)(x),
    \]
    and the same estimate holds for the second term. The conclusion now follows from \eqref{hypothesisg}.
\end{proof}

We are thus led to ask when \eqref{hypothesisg} is satisfied. Although the condition is trivial when \(g\) is increasing, that regime is of little interest for our purposes, since we are mainly concerned with weights that blow up at the origin. The natural class to consider is therefore that of regularly varying functions in the sense of Karamata; see \cite{BGT}.

\begin{definition} \label{defRV}
\textnormal{Let \(\ell : \mathbb R_+\to \mathbb R_+\) be a measurable function.}
\begin{itemize}
    \item \textnormal{We say that \(\ell\) is regularly varying at \(\infty\) with index \(\rho \in \mathbb R\) (symbolically, \(\ell \in \mathrm{RV}_{\rho}(\infty)\)) if, for every \(\lambda>0\),}
    \[
    \operatornamewithlimits{ess\, lim}_{t \to \infty} \frac{\ell(\lambda t)}{\ell(t)} = \lambda^\rho.
    \]

    \item \textnormal{We say that \(\ell\) is regularly varying at \(0\) with index \(\rho \in \mathbb R\) (symbolically, \(\ell \in \mathrm{RV}_{\rho}(0)\)) if, for every \(\lambda>0\),}
    \[
    \operatornamewithlimits{ess\, lim}_{t \to 0} \frac{\ell(\lambda t)}{\ell(t)} = \lambda^\rho.
    \]

    \item \textnormal{We say that \(\ell\) is slowly varying (either at \(0\) or at \(\infty\)) if \(\rho = 0\).}
\end{itemize}
\end{definition}

\begin{remark}\label{examplesslowly}
    \textnormal{Examples of slowly varying functions at \(0\) include constants, \(\log(2+\frac{1}{r})\), \(\log(2+\frac{1}{r})^\alpha\), \(\log \log(10 + \frac{1}{r})\), and \(\log \log \log (500 + \frac{1}{r})\). Likewise, examples of regularly varying functions at \(\infty\) with index \(1\) include \(t\), \(t\log_+ t\), \(t\log_+^\alpha t\), and \(t \log_+\log_+ t\).}
\end{remark}

The previous remark may seem somewhat artificial, but Karamata's representation theorem shows that, in a precise sense, slowly varying functions are essentially of logarithmic behavior. More precisely, a function \(\ell \in \operatorname{RV}_0(\infty)\) if and only if there exists \(A>0\) such that
\begin{equation}
    \ell(x) = \exp\left\{\eta(x) + \int_A^x \frac{\epsilon (u)}{u} \, \dd u\right\} \label{karamata}
    \qquad \text{for all } x > A,
\end{equation}
where \(\eta, \epsilon \in L^\infty(0,\infty)\), with
\[
\operatorname*{ess\,lim}_{x \to \infty} \eta(x) = L \in \mathbb R
\qquad\text{and}\qquad
\operatorname*{ess\,lim}_{x \to \infty} \epsilon(x) = 0.
\]

We shall also use the following notion.

\begin{definition}\label{normalized}
    \textnormal{We say that \(\ell \in \operatorname{RV}_0(\infty)\) is normalized if \(\eta(x)\) may be chosen constant in \eqref{karamata}.}
\end{definition}

\begin{remark}\label{normalized0}
    \textnormal{Although the notion of a normalized slowly varying function may at first seem somewhat elusive, in practice it is often easy to verify. Indeed, a function \(\ell:\mathbb R_+\to\mathbb R_+\) is slowly varying and normalized if and only if there exists \(A>0\) such that \(\ell\in W^{1,1}_{\loc}(A,\infty)\) and
    \[
    \operatornamewithlimits{ess\,lim}_{x\to\infty}\frac{x\ell'(x)}{\ell(x)}=0.
    \]
    In that case, one may simply define
    \(
    \epsilon(x)=x\ell'(x) \ell(x)^{-1}
    \) for $x>A$.}

\end{remark}

We also recall that \(\ell \in \operatorname{RV}_\rho(\infty)\) if and only if
\(
\ell(x)=x^\rho L(x)
\)
for some slowly varying function \(L \in \operatorname{RV}_0(\infty)\); see \cite[Theorem 1.4.1]{BGT}.

\begin{remark}\label{equivalence}
    \textnormal{Completely analogous statements hold for regularly varying functions at \(0\), upon making the change of variables \(x=1/r\).}
\end{remark}

We conclude this subsection with the result linking the \(g\)-maximal operator to slowly varying functions.

\begin{lemma} \label{slowvaryingg}
    Assume that \(g \in \operatorname{RV}_0(0)\) is continuous.

    Then, for every \(\lambda \in \mathbb R_+\), there exists \(C_{\lambda,g}>0\) such that \eqref{hypothesisg} holds for all \(z \in (0,\lambda)\).

    If, furthermore, \(g(r)\) is nondecreasing for sufficiently large \(r\), then there exists \(C_0>0\) such that
    \begin{equation}
        \int_0^1\tau^{d-1}g(z\tau)\,\dd \tau\leq C_{\infty,g} g(z)
        \qquad \text{for all } z>0.
        \label{hypothesisg2}
    \end{equation}
    As a consequence, in this case \eqref{hypothesisg} holds for $\lambda = \infty$ and \(z \in \mathbb R_+\).
\end{lemma}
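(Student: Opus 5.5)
The plan is to split the integral $\int_0^1\tau^{d-1}g(z\tau)\,\dd\tau$ according to whether $z\tau$ is small, where the slowly varying behavior of $g$ at $0$ controls the ratio $g(z\tau)/g(z)$, or $z\tau$ lies in a compact range, where continuity and positivity of $g$ suffice. The main tool is the Potter-type bound that follows from Karamata's representation \eqref{karamata}, transported to the origin via \Cref{equivalence}. Write $g(r)=\exp\{\eta(1/r)+\int_A^{1/r}\epsilon(u)u^{-1}\,\dd u\}$ for $r<1/A$, with $\eta$ bounded and $\epsilon(u)\to0$. Fix $\delta\in(0,d)$, say $\delta=d/2$, and choose $r_0>0$ so small that $|\epsilon(u)|\le\delta$ for $u>1/r_0$. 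Then for $0<s\le r\le r_0$,
\[
\frac{g(s)}{g(r)}\le e^{2\|\eta\|_\infty}\exp\Bigl(\int_{1/r}^{1/s}\frac{\delta}{u}\,\dd u\Bigr)=C_1\Bigl(\frac{r}{s}\Bigr)^{\delta}.
\]

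\emph{Case $z\le r_0$.} Taking $s=z\tau$ and $r=z$ gives $g(z\tau)\le C_1\tau^{-\delta}g(z)$. Hence
\[
\int_0^1\tau^{d-1}g(z\tau)\,\dd\tau\le C_1 g(z)\int_0^1\tau^{d-1-\delta}\,\dd\tau=\frac{C_1}{d-\delta}\,g(z).
\]

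\emph{Case $r_0<z<\lambda$.} Split the integral at $\tau=r_0/z$. For $\tau\ge r_0/z$ we have $z\tau\in[r_0,\lambda]$, so $g(z\tau)\le\max_{[r_0,\lambda]}g$, while $g(z)\ge\min_{[r_0,\lambda]}g>0$; the ratio is bounded by compactness and continuity. For $\tau<r_0/z$, apply the Potter bound with $r=r_0$: $g(z\tau)\le C_1(r_0/(z\tau))^{\delta}g(r_0)\le C_1\tau^{-\delta}g(r_0)$, since $r_0/z<1$. This is integrable against $\tau^{d-1}$, and $g(r_0)/g(z)$ is again bounded by compactness. Together the two cases give \eqref{hypothesisg} with some $C_{\lambda,g}$.

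For the second statement, assume $g$ is nondecreasing on $[r_1,\infty)$. The only new range is $z\ge\lambda_0:=\max(r_0,r_1)$. Split at $\tau=\lambda_0/z$. On $\tau\ge\lambda_0/z$ monotonicity gives $g(z\tau)\le g(z)$, so this piece contributes at most $g(z)/d$. On $\tau<\lambda_0/z$, the bound from the previous case with $\lambda$ replaced by $\lambda_0$ controls the piece by $C\,g(\lambda_0)$. Then $g(\lambda_0)\le g(z)$ by monotonicity, which yields \eqref{hypothesisg2} with a uniform constant.

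The main obstacle is deriving the Potter bound rigorously from \eqref{karamata} when $\eta$ and $\epsilon$ are only essentially bounded. One must note that the $\operatorname{ess\,lim}$ still permits choosing $r_0$ with $|\epsilon|\le\delta$ almost everywhere beyond $1/r_0$, and that $\eta$ is bounded, which is enough. Since $g$ is continuous, the identity holds pointwise on $(0,1/A)$, possibly after modifying $\eta$ on a null set, so no essential-supremum subtleties remain in the final inequality.
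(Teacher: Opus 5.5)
Your proof is correct and follows the paper's route. Potter-type bounds control $z$ near $0$, and continuity and compactness handle the intermediate range. For the second claim you split at $\tau=\lambda_0/z$, using monotonicity on the upper piece and the bounded-range estimate on the lower piece; this is equivalent to the paper's bound $\frac{1}{g(R_1)z^d}\int_0^{R_1}u^{d-1}g(u)\,\dd u$. Your explicit split at $\tau=r_0/z$ for $r_0<z<\lambda$ also makes precise the paper's brief appeal to ``continuity and positivity of $g$.'' That appeal silently relies on the local integrability of $u^{d-1}g(u)$ near the origin, which your Potter bound supplies.
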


\begin{proof}
    Clearly, \eqref{hypothesisg} is equivalent to
    \begin{equation}
        \sup_{0<z<\lambda}\int_0^1 \tau^{d-1} \frac{g(z\tau)}{g(z)} \,\dd\tau < \infty.
        \label{goal}
    \end{equation}
    By Potter's bounds, for any \(0<\epsilon<1\), there exists \(s_0>0\) such that
    \[
    \frac{g(s\tau)}{g(s)} \leq (1+\epsilon)\tau^{-\epsilon}
    \qquad \text{for all } \tau \in (0,1) \text{ and } s \in (0,s_0).
    \]
    Hence, for \(0<s<s_0\),
    \[
    \int_0^1 \tau^{d-1} \frac{g(s\tau)}{g(s)} \,\dd\tau
    \leq
    (1+\epsilon)\int_0^1 \tau^{d-1-\epsilon}\,\dd \tau
    =
    \frac{1+\epsilon}{d-\epsilon}.
    \]
    For \(s\in[s_0,\lambda)\), the desired bound follows from the continuity and positivity of \(g\). This proves \eqref{hypothesisg}.

    Assume now, in addition, that \(g(r)\) is nondecreasing for sufficiently large \(r\), say for \(r\in(R_1,\infty)\). Since \eqref{hypothesisg} already holds for \(\lambda=2R_1\), it remains to consider \(s>2R_1\). For such \(s\), we split
    \begin{align*}
        \int_0^1 \tau^{d-1} \frac{g(s\tau)}{g(s)}\,\dd\tau
        &=
        \int_0^{R_1/s} \tau^{d-1} \frac{g(s\tau)}{g(s)} \,\dd \tau
        +
        \int_{R_1/s}^1 \tau^{d-1} \frac{g(s\tau)}{g(s)} \,\dd \tau = \operatorname{(I)} + \operatorname{(II)}.
    \end{align*}
    By monotonicity, \(\operatorname{(II)} \leq 1/d\). On the other hand,
    \[
    \operatorname{(I)}
    \leq
    \frac{1}{g(R_1) s^d}\int_0^{R_1} u^{d-1} g(u)\,\dd \tau.
    \]
    The latter is uniformly bounded by the argument already used above, combining Potter's bounds near the origin with continuity on compact intervals. This proves \eqref{hypothesisg2}.
\end{proof}

\subsection{The Wiener bound and the Osgood condition}\label{wienersubsection}

We now turn to the question of whether the weight \(g(r)\) forces \(\operatorname{M}_\lambda^g\) to enjoy stronger integrability properties than the classical Hardy--Littlewood maximal operator. When \(\lambda=\infty\), we adopt the convention that \(B_\lambda=\mathbb R^d\).

\begin{lemma}[Wiener bound]\label{constructiong}
    Let \(\lambda \in \mathbb R_+ \cup \{\infty\}\), and assume that \(g : \mathbb R_+ \to \mathbb R_+\) is continuous and nonincreasing on \((0,R_0)\).

    Assume further that
    \[
    \gamma=\gamma_\lambda\coloneqq \inf_{z\in (0,\lambda)} g(z) > 0
    \]
    and define
    \begin{equation}
           \widetilde\Phi(z) = z \left(\int_{R_0^{-d}}^z \frac{\dd s}{s\, g(s^{-1/d})}\right)_+. \label{widetildePhi}
    \end{equation}

    Then, for every \(u\in L^1_{\loc}(\mathbb{R}^d)\) and every bounded measurable set \(U \subset \mathbb R^d\), one has
    \begin{align}
        \int_{U}\left|\operatorname{M}_{\lambda}^g u(x)\right|\,\dd x &\leq C_{d, \gamma, R_0, \mathcal L^d(U)} \left(1+ \Vert u \Vert_{L^1(U+B_\lambda)}\right) +  C_{d}  \int_{U + B_\lambda} \Phi\!\left(|u(x)|\right)\,\dd x,
        \label{wiener}
    \end{align}
    where
    \[
    U + B_\lambda = \{x+y \in \mathbb R^d : x\in U,\ y \in B_\lambda\},
    \]
    and the right-hand side of \eqref{wiener} is to be understood to be $0$ if $\Vert u \Vert_{L^1(U+B_\lambda)} = 0$.
\end{lemma}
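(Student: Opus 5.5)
We follow the classical proof of Wiener's $L\log L$ bound: a weak-type estimate at each level, then the layer-cake formula. The weight $g$ enters only through the range of radii that can produce large values of $\operatorname M^g_\lambda u$. Note that $\Phi$ in \eqref{wiener} should be read as $\widetilde\Phi$ from \eqref{widetildePhi}.

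\emph{Step 1 (weak-type estimate at level $t$).} Fix $t>0$ and put $E_t=\{x\in U:\operatorname M^g_\lambda u(x)>t\}$. For $x\in E_t$ there is $r\in(0,\lambda)$ with
\[
\int_{B_r(x)}|u|>t\,g(r)\,\mathcal L^d(B_r)\geq t\gamma\,\mathcal L^d(B_r).
\]
First, this forces $\mathcal L^d(B_r)<\|u\|_{L^1(U+B_\lambda)}/(t\gamma)$, so the radius is small when $t$ is large. Second, split $u=u\mathbf 1_{\{|u|>s\}}+u\mathbf 1_{\{|u|\le s\}}$ with a threshold $s$ chosen so that $s\le t\,g(r)/2$. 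For the radii in question ($r<R_0$ small) this is arranged by taking $s=\theta(t)$, where $\theta$ is built from $g$ evaluated at radius $\sim (\|u\|/t)^{1/d}$; monotonicity of $g$ on $(0,R_0)$ makes this comparison work. The bounded part then contributes at most half the average, and Vitali's covering lemma gives
\[
\mathcal L^d(E_t)\le \frac{C_d}{t\,g(r_t)}\int_{\{|u|>c\,t g(r_t)\}\cap(U+B_\lambda)}|u| .
\]
The cleaner normalization is to compare with the plain Hardy--Littlewood bound: $\operatorname M^g_\lambda u\le \gamma^{-1}\operatorname M u$ always, and the weight only helps through the factor $1/g$.

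\emph{Step 2 (layer cake).} For a threshold $t_0$ depending on $d,\gamma,R_0,\mathcal L^d(U)$ and $\|u\|_{L^1}$, write
\[
\int_U \operatorname M^g_\lambda u=\int_0^\infty\mathcal L^d(E_t)\,\dd t\le t_0\mathcal L^d(U)+\int_{t_0}^\infty\mathcal L^d(E_t)\,\dd t .
\]
The first term goes into $C(1+\|u\|_{L^1})$; the factor $\|u\|$ can be absorbed because the threshold is linear in $\|u\|_{L^1}$ up to constants, and the convention for $\|u\|=0$ handles the degenerate case. Insert the Step 1 bound into the second term and apply Fubini. Each point $y$ with $|u(y)|=\sigma$ collects
\[
\sigma\int_{t_0}^{\sigma\text{-dependent}}\frac{\dd t}{t\,g(\cdots)} .
\]
After the change of variables $s\sim t$, which puts the radius at $s^{-1/d}$ in the weight, this is exactly $\sigma\int_{R_0^{-d}}^{\sigma}\frac{\dd s}{s\,g(s^{-1/d})}=\widetilde\Phi(\sigma)$ up to constants. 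The lower limit $R_0^{-d}$ appears because only radii $r<R_0$ lie in the monotone regime; larger radii give averages bounded by $C\|u\|_{L^1}/(\gamma R_0^d)$ and are absorbed into the constant term.

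\emph{Main obstacle.} The hard step is making Step 1 precise. The radius $r$ realizing the supremum is not fixed, so $g(r)$ varies with $x$. One must show that at level $t$ the relevant scale is $\mathcal L^d(B_r)\lesssim|u|$-mass$/t$, and use that $g$ is nonincreasing near $0$ to replace $g(r)$ by $g$ at the critical scale. This is what converts the dependence on $t$ into the weight $g(s^{-1/d})$ inside $\widetilde\Phi$, with $s$ proportional to the local size of $|u|$. I would handle it by decomposing dyadically in radius, $r\in(2^{-k-1}R_0,2^{-k}R_0]$. On each shell $g(r)\ge g(2^{-k}R_0)$ by monotonicity, so there one applies the standard weak $(1,1)$ bound at level $t\,g(2^{-k}R_0)$. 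The main risk is that summing over $k$ loses a logarithm. To prevent this, for fixed $t$ only shells with $2^{-kd}R_0^d\lesssim|u|$-mass$/t$ are nonempty, and the sum should reorganize into the integral defining $\widetilde\Phi$.
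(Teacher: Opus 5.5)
Your route is the paper's: a weak-type estimate at level $t$ in which every radius realizing $\operatorname{M}^g_\lambda u(x)>t$ is forced below the critical scale $r_t=\bigl(\|u\|_{L^1(U+B_\lambda)}/(t\gamma\mathcal L^d(B_1))\bigr)^{1/d}$. You then use monotonicity of $g$ to replace $g(r)$ by $g(r_t)$, truncate $u$, apply a covering lemma, and finish with Cavalieri and Fubini. The paper truncates more crudely at $\gamma t$, using $\operatorname{M}^g_\lambda(u\mathbbm{1}_{\{|u|\le\gamma t\}})\le t$ since $g\ge\gamma$; your ball-by-ball truncation at $c\,t\,g(r_t)$ is valid but gains nothing here.

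The ``main obstacle'' you describe is not one. As soon as $t>A:=\|u\|_{L^1(U+B_\lambda)}/(\gamma\mathcal L^d(B_1)R_0^d)$, \emph{all} realizing radii satisfy $r<r_t<R_0$. Hence $g(r)\ge g(r_t)$ uniformly in $x$, and a single covering argument finishes Step 1. The dyadic decomposition you propose would actually fail if carried out by summing shell-wise weak $(1,1)$ bounds. The shells that can be nonempty are all $k\ge k_t$, so you would pick up a factor $\sum_{k\ge k_t}1/g(2^{-k}R_0)$. For $g$ nonincreasing near $0$ this series diverges precisely when the Osgood condition holds, i.e.\ exactly when $\widetilde\Phi$ is superlinear and the lemma has content.

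The genuine gap is in Step 2: the change of variables does not produce $\widetilde\Phi(\sigma)$ ``up to constants''. To turn $g(r_t)$ into $g(s^{-1/d})$ you must take $s=\gamma\mathcal L^d(B_1)t/\|u\|_{L^1(U+B_\lambda)}$. This sends $t=A$ to $R_0^{-d}$, but it sends the upper endpoint $t\approx\sigma/\gamma$ to $\mathcal L^d(B_1)\sigma/\|u\|_{L^1(U+B_\lambda)}$. So a point with $|u|=\sigma$ contributes $\sigma\,\Psi\bigl(\mathcal L^d(B_1)\sigma/\|u\|_{L^1(U+B_\lambda)}\bigr)$, where $\Psi(z)=\widetilde\Phi(z)/z$. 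Since $g$ is only assumed monotone (not slowly varying), the factor $\|u\|_{L^1}$ cannot be moved out of the argument of $g$. Since $\widetilde\Phi$ is not homogeneous, rescaling $u$ does not remove it either.

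The missing idea is the comparison $\Psi(\alpha z)\le\Psi(z)+\gamma^{-1}\log_+\alpha$, which follows from the lower bound $g\ge\gamma$. It costs the extra term $\gamma^{-1}\|u\|_{L^1(U+B_\lambda)}\log_+\bigl(\mathcal L^d(B_1)/\|u\|_{L^1(U+B_\lambda)}\bigr)\le\mathcal L^d(B_1)/(e\gamma)$. This is exactly the source of the additive $1$ in $C(1+\|u\|_{L^1(U+B_\lambda)})$. In your accounting the only constant term is $t_0\mathcal L^d(U)$, which is linear in $\|u\|_{L^1}$ and cannot produce it.
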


\begin{proof}
    Fix \(u \in L^1_{\loc}(\mathbb R^d)\), \(\lambda \in \mathbb R_+ \cup \{\infty\}\), and a bounded measurable set \(U \subset \mathbb R^d\). We may clearly assume that \(\Vert u \Vert_{L^1(U+B_\lambda)}\neq 0\).

    \textit{Step 1: A weak bound.}
    We claim that, for
    \begin{equation}
      t>\frac{\|u\|_{L^1(U + B_\lambda)}}{\gamma \mathcal L^d(B_1) R_0^d}=:A,
      \label{condt}
   \end{equation}
   one has
   \begin{align}
       \mathcal{L}^d&\left(\left\{x\in U: \operatorname{M}_{\lambda}^g u(x)>2t\right\}\right) \nonumber \\
       &\leq \frac{C_{d}}{t\,g\left(\left[(t\gamma \mathcal L^d(B_1))^{-1}\|u\|_{L^1(U + B_\lambda)}\right]^{1/d}\right)}
       \int_{\left\{x\in U+B_\lambda: |u(x)|>\gamma t\right\}}|u(x)|\,\dd x.
       \label{weakbound}
   \end{align}

   Indeed, if \(x \in U\) satisfies \(\operatorname{M}_{\lambda}^g u(x)>t\), then there exists \(r\in(0,\lambda)\) such that
    \begin{equation}
        \mathcal L^d(B_1) r^d< \frac{1}{ t g(r)}\int_{B_r(x)}|u(y)|\,\dd y.
        \label{equacao1}
    \end{equation}
    In particular,
    \[
    r^d< \frac{1}{t\gamma\mathcal L^d(B_1)}\|u\|_{L^1(U + B_\lambda)}.
    \]
    If \(t\) satisfies \eqref{condt}, then necessarily \(r\in(0,R_0)\), so that \(g\) lies in its nonincreasing regime. Returning to \eqref{equacao1}, we therefore obtain
    \begin{equation}
        \mathcal L^d(B_r(x))
        =
        \mathcal L^d(B_1) r^d
        <
        \frac{1}{ t g\left(\left[(t\gamma \mathcal L^d(B_1))^{-1}\|u\|_{L^1(U + B_\lambda)}\right]^{1/d}\right)}
        \int_{B_r(x)}|u(y)|\,\dd y.
        \label{equacao2}
    \end{equation}

    Consequently, under \eqref{condt}, the set \(\{x\in U:\operatorname{M}_{\lambda}^g u(x)>t\}\) may be covered by balls whose volumes satisfy \eqref{equacao2}. The Besicovitch covering lemma (see \cite[Theorem 2.17]{ambrosio2000functions}) therefore yields the existence of a universal constant \(C_d>0\) such that
    \[
    \mathcal{L}^d\left(\left\{x\in U: \operatorname{M}_{\lambda}^g u(x)>t\right\}\right)\leq \frac{C_{d}}{t\,g\left(\left[(t\gamma \mathcal L^d(B_1))^{-1}\|u\|_{L^1(U+B_{\lambda})}\right]^{1/d}\right)}\int_{ U+ B_{\lambda}}|u(x)|\,\dd x.
    \]

    To sharpen this estimate, decompose \(u=u_1+u_2\), where
    \[
    u_1(x)=u(x)\mathbbm{1}_{\{x\in U+B_\lambda:\ |u(x)|>\gamma t\}},
    \qquad
    u_2(x)=u(x)\mathbbm{1}_{\{x\in U+B_\lambda:\ |u(x)|\leq \gamma t\}}.
    \]
    Since \(|u_2|\leq \gamma t\) and \(g(r)\geq \gamma\) for all \(r\in(0,\lambda)\), one has \(\operatorname{M}_\lambda^g u_2\leq t\). Hence
    \[
    \{\operatorname{M}_\lambda^g u>2t\}\subset \{\operatorname{M}_\lambda^g u_1>t\},
    \]
    and \eqref{weakbound} follows by applying the previous estimate to \(u_1\).

    \textit{Step 2: The Wiener argument.}
    By Cavalieri's principle and \eqref{weakbound},
    \begin{align}
        \int_{U}&\left|\operatorname{M}_\lambda^g u(x)\right|\,\dd x \nonumber\\
        &=\int_0^\infty\mathcal{L}^d\left(\left\{x\in U:\left|\operatorname{M}_\lambda^g u(x)\right|>t\right\}\right)\,\dd t \nonumber\\
        &\leq \frac{1}{2}\mathcal L^d(U)A+2\int_{A}^\infty\mathcal{L}^d\left(\left\{x\in U: \operatorname{M}_{\lambda}^g u(x)> 2t\right\}\right)\,\dd t \nonumber\\
        &\leq \frac{1}{2}\mathcal L^d(U)\frac{\Vert u\Vert_{L^1(U+B_\lambda)}}{\gamma \mathcal L^d(B_1)R_0^d} \nonumber\\
        &\qquad + C_{d}\int_{A}^\infty \frac{1}{t\,g\left(\left[(t\gamma \mathcal L^d(B_1))^{-1}\|u\|_{L^1(U+B_\lambda)}\right]^{1/d}\right)}
        \int_{\{x\in U + B_\lambda: |u(x)|> \gamma t\}}|u(x)|\,\dd x\,\dd t.
        \label{integralMg}
    \end{align}
    We now apply Fubini--Tonelli to the second term in \eqref{integralMg}. After the change of variables
    \[
    s=\frac{ t \gamma \mathcal L^d(B_1)}{\|u\|_{L^1(U+B_\lambda)}},
    \]
    one obtains
    \begin{align*}
        &C_{d}\int_{A}^\infty \frac{1}{t\,g\left(\left[(t\gamma \mathcal L^d(B_1))^{-1}\|u\|_{L^1(U+B_\lambda)}\right]^{1/d}\right)}
        \int_{\{x\in U + B_\lambda: |u(x)|> \gamma t\}}|u(x)|\,\dd x\,\dd t \\
        &\qquad=
        C_{d}\frac{\Vert u \Vert_{L^1(U+B_\lambda)}}{\mathcal L^d(B_1)}
        \int_{\{x \in U+B_\lambda: |u(x)| > \Vert u \Vert_{L^1(U+B_\lambda)}/(\mathcal L^d(B_1)R_0^d) \}}
        \widetilde \Phi\!\left(\frac{\mathcal L^d(B_1) |u(x)|}{\Vert u \Vert_{L^1(U+B_\lambda)}}\right)\,\dd x.
    \end{align*}
    Hence \eqref{integralMg} becomes
    \begin{align}
        \int_{U}&\left|\operatorname{M}_{\lambda}^g u(x)\right|\,\dd x\nonumber \\
        &\leq \frac{C_d}{\gamma R_0^d} \mathcal L^d(U)\Vert u\Vert_{L^1(U+B_\lambda)}  +  C_{d} \frac{\Vert u \Vert_{L^1(U+B_\lambda)}}{\mathcal L^d(B_1)} \int_{U + B_\lambda}\widetilde \Phi\!\left(\frac{\mathcal L^d(B_1) |u(x)|}{\Vert u \Vert_{L^1(U+B_\lambda)}}\right)\,\dd x.
        \label{wiener0}
    \end{align}

    \textit{Step 3: Conclusion.}
  To deduce \eqref{wiener} from \eqref{wiener0}, we proceed as follows. Define
    \[
    \Psi(z) = \left( \int_{R_0^{-d}}^z \frac{\dd s}{ s \, g(s^{-1/d}) } \right)_+,
    \]
    so that \(\widetilde \Phi(z) = z \,\Psi(z)\). Then \eqref{wiener0} may be rewritten as
    \begin{align}
        \int_{U}&\left|\operatorname{M}_{\lambda}^g u(x)\right|\,\dd x\nonumber \\
        &\leq \frac{C_d}{\gamma R_0^d} \mathcal L^d(U)\Vert u\Vert_{L^1(U+B_\lambda)}  +  C_{d}  \int_{U + B_\lambda}|u(x)| \Psi\!\left(\frac{\mathcal L^d(B_1) |u(x)|}{\Vert u \Vert_{L^1(U+B_\lambda)}}\right)\,\dd x.
        \label{wiener0'}
    \end{align}
    Moreover, for every \(\alpha>0\),
    \[
    \Psi(\alpha z) \leq  \Psi(z) + \frac{1}{\gamma}\log_+\alpha.
    \]
    It follows that
    \begin{align}
        \int_{U}&\left|\operatorname{M}_{\lambda}^g u(x)\right|\,\dd x\nonumber \\
        &\leq \frac{C_d}{\gamma} \Vert u\Vert_{L^1(U+B_\lambda)}\bigg(\frac{\mathcal L^d(U)}{R_0^d} + \log_+\bigg( \frac{\mathcal L^d(B_1)}{\Vert u \Vert_{L^1(U+B_\lambda)}} \bigg) \bigg)  +  C_{d}  \int_{U + B_\lambda}|u(x)|  \Psi(|u(x)|)\,\dd x.
        \nonumber
    \end{align}
    Because $\log_+(1/z) \leq C_{\epsilon} z^{-\epsilon}$, for $z > 0$ and $0<\epsilon<1$, we thus derive \eqref{wiener} via Young's inequality. The proof is complete.
\end{proof}

\begin{remark}
\textnormal{A few remarks are in order.}
\begin{itemize}
    \item \textnormal{(Relation with the Osgood condition). The estimate \eqref{wiener} is naturally related to the Osgood condition \eqref{osgoodcond}. Indeed, the change of variables \(r=s^{-1/d}\) shows that \eqref{osgoodcond} is equivalent to
    \[
    \lim_{z\to\infty}\frac{\widetilde\Phi(z)}{z}=\infty,
    \]
    that is, to the superlinearity of \(\widetilde\Phi\).}

    \item \textnormal{(A Stein-type counterpart). We refer to \Cref{constructiong} as a Wiener bound because Wiener proved (see \cite{wiener}) that, if \(u \in L\log L_{\loc}\), then \(\operatorname M_\lambda u \in L^1_{\loc}\). Stein later established the converse implication: if \( \operatorname M_\lambda u \in L^1_{\loc}\), then \(u \in L\log L_{\loc}\); see \cite{Stein1969}. It would be interesting to know whether an analogous Stein-type counterpart holds for \Cref{constructiong}.}


\end{itemize}
\end{remark}

In the next sections, we shall need \Cref{constructiong} in the following balanced form.

\begin{corollary}\label{constructiong2} 
Assume the hypotheses of \Cref{constructiong}, and suppose in addition that there exists a finite superlinear Young function \(\Phi_0:[0,\infty)\to[0,\infty)\) such that \begin{equation} \widetilde \Phi(z)\leq C_0\bigl(1+\Phi_0(z)\bigr) \qquad \text{for all } z\geq 0 \label{domination1} \end{equation} for some constant \(C_0>0\). Then, for every \(u\in L^{\Phi_0}(U+B_\lambda)\), one has \begin{equation} \|\operatorname{M}^g_\lambda u\|_{L^1(U)} \leq C_{d,\Phi_0,U}\bigl( \|u\|_{L^1(U+B_\lambda)} + \|u\|_{L^{\Phi_0}(U+B_\lambda)} \bigr). \end{equation} \end{corollary}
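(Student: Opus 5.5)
We use a scaling argument: apply the Wiener bound \eqref{wiener} not to \(u\) itself but to a normalized multiple of it, and exploit the positive homogeneity of \(\operatorname M^g_\lambda\). Fix \(u\in L^{\Phi_0}(U+B_\lambda)\) with \(u\not\equiv 0\) on \(U+B_\lambda\), and set \(\mu:=\|u\|_{L^{\Phi_0}(U+B_\lambda)}>0\) and \(v:=u/\mu\). By definition of the Luxemburg norm and lower semicontinuity (monotone convergence in the infimum), we have \(\int_{U+B_\lambda}\Phi_0(|v|)\,\dd x\le 1\). Since \(\operatorname M^g_\lambda(v)=\mu^{-1}\operatorname M^g_\lambda u\), it suffices to bound \(\|\operatorname M^g_\lambda v\|_{L^1(U)}\) and multiply by \(\mu\).

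The first step is to apply \eqref{wiener} to \(v\), with \(\Phi=\widetilde\Phi\) (this is the function appearing in the proof of \Cref{constructiong}). This yields
\[
\|\operatorname M^g_\lambda v\|_{L^1(U)}\le C\bigl(1+\|v\|_{L^1(U+B_\lambda)}\bigr)+C_d\int_{U+B_\lambda}\widetilde\Phi(|v|)\,\dd x .
\]
Next we use \eqref{domination1} to bound the last integral by \(C_0\bigl(\mathcal L^d(U+B_\lambda)+1\bigr)\); here we assume \(\lambda<\infty\), or else we argue more carefully as described below. This gives
\[
\|\operatorname M^g_\lambda u\|_{L^1(U)}\le C\bigl(\mu+\|u\|_{L^1(U+B_\lambda)}\bigr)+C\mu ,
\]
which is the claim. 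The additive constant \(1\) in \eqref{wiener} is harmless precisely because it gets multiplied by \(\mu\).

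The main obstacle is the additive constant \(C_0\) in \eqref{domination1} when \(U+B_\lambda\) has infinite measure, which happens for \(\lambda=\infty\): the term \(\int C_0\,\dd x\) diverges. To fix this we refine the domination step. Since \(\widetilde\Phi(z)=0\) for \(z\le R_0^{-d}\), the bound \eqref{domination1} is only needed on the set where \(|v|>R_0^{-d}\). On \(\{z\ge R_0^{-d}\}\), and using the superlinearity of \(\Phi_0\) together with \(\Phi_0(0)=0\) and convexity, we have \(1\le C\,\Phi_0(z)+C z\). Hence \(\widetilde\Phi(z)\le C\bigl(\Phi_0(z)+z\bigr)\) for all \(z\ge0\), with constants depending on \(\Phi_0\) and \(R_0\). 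Integrating then gives
\[
\int_{U+B_\lambda}\widetilde\Phi(|v|)\,\dd x\le C\bigl(1+\|v\|_{L^1(U+B_\lambda)}\bigr),
\]
which removes any dependence on \(\mathcal L^d(U+B_\lambda)\).

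The remaining difficulty is the constant term \(C_{d,\gamma,R_0,\mathcal L^d(U)}\) in \eqref{wiener}. After rescaling it becomes \(C\mu\), so it is acceptable, and it depends only on \(\mathcal L^d(U)\), which is finite because \(U\) is bounded. Collecting the estimates yields the stated inequality with a constant \(C_{d,\Phi_0,U}\), which also depends on \(g\) through \(\gamma\) and \(R_0\).
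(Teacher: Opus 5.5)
Your proof is correct and follows the paper's route: normalize by $\mu=\|u\|_{L^{\Phi_0}(U+B_\lambda)}$, apply \eqref{wiener} to $u/\mu$ using the homogeneity of $\operatorname{M}^g_\lambda$, and close with \eqref{domination1} and $\int_{U+B_\lambda}\Phi_0(|u|/\mu)\,\dd x\le 1$. Your extra step is a genuine improvement: you use $\widetilde\Phi=0$ on $[0,R_0^{-d}]$ to obtain $\widetilde\Phi(z)\le C\bigl(z+\Phi_0(z)\bigr)$. The paper instead silently integrates the constant $C_0$ over $U+B_\lambda$, which produces a factor $\mathcal L^d(U+B_\lambda)$ that is infinite in the case $\lambda=\infty$ used in \Cref{SecReg}.
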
 
\begin{proof} Let \( \mu\coloneqq\|u\|_{L^{\Phi_0}(U+B_\lambda)}, \) which we may assume is positive. Applying \eqref{wiener} to \(\mu^{-1}u\), and using the homogeneity of \(\operatorname{M}^g_\lambda\), we obtain 
\[ \int_U |\operatorname{M}^g_\lambda u(x)|\,\dd x \leq C_{d,\gamma,R_0,\mathcal L^d(U)}(\mu+ \|u\|_{L^1(U+B_\lambda)}) + C_d\,\mu \int_{U+B_\lambda}\widetilde \Phi\!\left(\mu^{-1}|u(x)|\right)\,\dd x. \] By \eqref{domination1}, \[ \widetilde \Phi\!\left(\mu^{-1}|u(x)|\right) \leq C_0\Bigl(1+\Phi_0\!\left(\mu^{-1}|u(x)|\right)\Bigr). \] Since \(\mu\) is the Luxemburg norm of \(u\) in \(L^{\Phi_0}(U+B_\lambda)\), we have \( \int_{U+B_\lambda}\Phi_0\!\left(\mu^{-1}|u(x)|\right)\,\dd x\leq 1. \) Therefore, \[ \int_U |\operatorname{M}^g_\lambda u(x)|\,\dd x \leq C_{d,\gamma,R_0,\mathcal L^d(U)}\|u\|_{L^1(U+B_\lambda)} + C_{d,\Phi_0,U}\,\mu, \] which is precisely the desired estimate. \end{proof}

\section{Proof of \texorpdfstring{\Cref{main}}{}}\label{section2}

Before proving our main result, we introduce some basic notation and definitions. We denote by $\pazocal{B}(E,F)$ the space of bounded functions from $E$ to $F$, by $L^0(\mathbb R^d)$ the space of measurable functions endowed with the topology of convergence in measure, and by $\log L(\mathbb R^d)$ the space of functions $u$ such that
\[
\pseudo u _{\log L(\mathbb R^d)} \coloneqq  \int_{\mathbb R^d}\log(1+|u(x)|)\,\dd x<\infty.
\]
Each of these spaces admits a natural local counterpart, and we shall use the corresponding notation without further comment.

\begin{definition}[Regular Lagrangian flow]\label{rlf}
\textnormal{We say that $\boldsymbol X : \{(s,t,x) \in [0,T]\times [0,T] \times \mathbb R^d; s\leq t\} \to \mathbb R^d$ is a regular Lagrangian flow in the renormalized sense associated with \eqref{flow} if the following conditions hold:
\begin{enumerate}
    \item For every $s\in[0,T)$, one has
    \[
    \boldsymbol X(\,\cdot\,,s,\,\cdot\,)\in C([s,T];L^0_{\loc}(\mathbb R^d))\cap \pazocal{B}([s,T];\log L_{\loc}(\mathbb R^d)),
    \]
    and
    \[
    \boldsymbol X(s,s,x)=x
    \]
    for almost every $x\in\mathbb R^d$.
    \item For every $s \in [0,T)$ and every $\beta\in C^1(\mathbb R^d;\mathbb R)$ satisfying
    \[
    |\beta(z)|\leq C\bigl(1+\log(1+|z|)\bigr)
	\quad\text{and}\quad
	|\nabla\beta(z)|\leq C(1+|z|)^{-1}
    \qquad \forall z\in\mathbb R^d,
	\]
	for some constant $C>0$, one has
	\[
    \partial_t\bigl(\beta(\boldsymbol X(t,s,x))\bigr)
	=
	\nabla\beta(\boldsymbol X(t,s,x))\cdot \boldsymbol b(t,\boldsymbol X(t,s,x))
	\]
	in the weak sense on $(s,T)\times\mathbb R^d$.
    \item There exists a constant $L>0$, called the compressibility constant, such that for every $0\le s\le t\le T$,
	\[
    \boldsymbol X(t,s,\cdot)_\#\mathcal L^d\leq L\mathcal L^d,
	\]
    that is,
	\[
	\int_{\mathbb R^d}\varphi(\boldsymbol X(t,s,x))\,\dd x
	\leq
	L\int_{\mathbb R^d}\varphi(x)\,\dd x
	\]
	for every measurable nonnegative function $\varphi$.
    \item The (forward) semigroup property holds: for every $0\le s\le \tau\le t\le T$,
    \[
    \boldsymbol X(t,\tau,\boldsymbol X(\tau,s,x))=\boldsymbol X(t,s,x)
    \]
    for almost every $x\in\mathbb R^d$.
\end{enumerate}}
\end{definition}

Endowed with the notion of the $g$-maximal function, we are in a position to prove \Cref{main}. Let \(\epsilon>0\) be fixed, and assume throughout this section that the vector field \(\boldsymbol{b}\) satisfies \eqref{regularity}, \eqref{growth}, and \eqref{compressibility}.

A fundamental ingredient is the following sublevel estimate of Crippa--De Lellis \cite[Proposition 3.2]{crippadelellis}, which asserts, roughly speaking, that every integral curve of \eqref{flow}, apart from a set of small measure, remains bounded. This allows us to remove the auxiliary assumption \eqref{linfty}, which was imposed in the introduction for expository purposes. The point is that, after excluding a set of initial data of arbitrarily small measure, one may work with a truncated operator \(\operatorname{M}_\lambda^g\), which depends only on the local integrability of \(D\boldsymbol{b}\). For a detailed proof of the result below, we refer to \cite[Lemma 5.5]{bouchutcrippa}.

\begin{lemma}\label{sublevel} Let \(R>0\), and define
    \[
    V_{\lambda,R}=\bigl\{x\in B_R:\ |\boldsymbol{X}(t,0,x)|<\lambda \text{ for all } t\in[0,T]\bigr\}.
    \]
    Then, for every \(\epsilon>0\), there exists \(\lambda=\lambda(R,\epsilon)>0\) such that
    \begin{equation}
        \mathcal{L}^d\bigl(B_R\setminus V_{\lambda,R}\bigr)<\epsilon/2.
        \label{ineqsublevel}
    \end{equation}
\end{lemma}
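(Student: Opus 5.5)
The plan is the Crippa--De Lellis argument: control the logarithm of the distance from the origin along trajectories, then apply Chebyshev. Fix \(R>0\) and consider the functional
\[
Q(t)\coloneqq\int_{B_R}\log\bigl(1+|\boldsymbol X(t,0,x)|\bigr)\,\dd x ,
\]
or, to stay inside the renormalized framework of \Cref{rlf}, the quantity \(\int_{B_R}\sup_{t\in[0,T]}\log(1+|\boldsymbol X(t,0,x)|)\,\dd x\). If we can show this is bounded by a constant \(C=C(R,T,\boldsymbol b,L)\), then Chebyshev's inequality gives
\[
\mathcal L^d\bigl(\{x\in B_R:\sup_t|\boldsymbol X(t,0,x)|\ge\lambda\}\bigr)\le \frac{C}{\log(1+\lambda)},
\]
and it suffices to choose \(\lambda\) large enough that the right-hand side is below \(\epsilon/2\). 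Measurability of the supremum follows from the continuity in time of trajectories: either use a continuous representative, or take the supremum over rational times and use continuity of \(t\mapsto\boldsymbol X(t,0,x)\) for a.e. \(x\).

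For the bound, apply property (2) of \Cref{rlf} with \(\beta(z)=\log(1+|z|)\), after a standard \(C^1\) smoothing at the origin that satisfies the stated growth conditions. This gives, for a.e. \(x\),
\[
\log\bigl(1+|\boldsymbol X(t,0,x)|\bigr)\le \log(1+|x|)+\int_0^T\frac{|\boldsymbol b(\tau,\boldsymbol X(\tau,0,x))|}{1+|\boldsymbol X(\tau,0,x)|}\,\dd\tau .
\]
The right-hand side does not depend on \(t\), so it also bounds the supremum. Using the decomposition \eqref{growth}, the integrand is at most \(|\boldsymbol b^1(\tau,\boldsymbol X)|+|\boldsymbol b^2(\tau,\boldsymbol X)|\). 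Integrating over \(B_R\) and using the compressibility bound (3) of \Cref{rlf}, the \(\boldsymbol b^1\) term contributes at most \(L\|\boldsymbol b^1\|_{L^1((0,T)\times\mathbb R^d)}\). The \(\boldsymbol b^2\) term contributes at most \(\mathcal L^d(B_R)\|\boldsymbol b^2\|_{L^1L^\infty}\). The first term contributes \(\mathcal L^d(B_R)\log(1+R)\). Together these give the claimed constant.

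The main obstacle is justifying the pointwise integral inequality from the merely weak, renormalized formulation, that is, passing from a distributional identity in \((t,x)\) to an a.e.\ statement along trajectories. The approach is to note that the weak identity, tested against functions \(\varphi(x)\psi(t)\), says that \(t\mapsto\beta(\boldsymbol X(t,0,x))\) is, for a.e.\ \(x\), absolutely continuous with derivative \(\nabla\beta\cdot\boldsymbol b\). That derivative is integrable in \((t,x)\) over \((0,T)\times B_R\) by the very estimate above, so Fubini applies. Since this is exactly \cite[Lemma 5.5]{bouchutcrippa}, one may also simply cite that result.
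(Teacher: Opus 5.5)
Your proposal is correct and is the argument behind the results the paper invokes here. The paper gives no proof of its own, only citing \cite[Proposition 3.2]{crippadelellis} and \cite[Lemma 5.5]{bouchutcrippa}, and those proofs proceed as you do: renormalize with a smoothed $\log(1+|z|)$, bound $\int_{B_R}\sup_t\log(1+|\boldsymbol X(t,0,x)|)\,\dd x$ via the splitting \eqref{growth} and the compressibility bound (which also gives the integrability of $\nabla\beta(\boldsymbol X)\cdot\boldsymbol b(t,\boldsymbol X)$ without circularity), and conclude by Chebyshev.
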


Henceforth, we fix \(\lambda>0\) as in \eqref{ineqsublevel} and set
\[
U_{\epsilon,R}=V_{\lambda,R}.
\]

Let us also note that, if \(\boldsymbol{b}\in L^1((0,T);L^\infty(\mathbb R^d))\), then one may simply take
\[
\lambda=\int_0^T\|\boldsymbol{b}(t,\cdot)\|_{L^\infty(\mathbb R^d)}\,\dd t.
\]
On the other hand, if \(\boldsymbol{b}\in L^1((0,T);W^{1,1}(\mathbb R^d))\), no truncation of \(\operatorname{M}_\lambda^g\) is needed, and one may therefore take \(\lambda=\infty\). In both cases, the set \(U_{\epsilon,R}\) may be chosen to be \(B_R\).

Accordingly, in what follows, we shall mainly restrict attention to the genuinely local case
\[
\boldsymbol{b}\in L^1((0,T);W^{1,1}_{\loc}(\mathbb R^d)),
\]
since the two special cases appearing in the statement of \Cref{main} are covered by the preceding remarks.

\subsection{The quantitative lemma}

We begin by deriving a quantitative version of \Cref{main}. In order to keep the resulting estimates explicit, we shall impose a number of additional assumptions. In the next subsection, however, we will show that these hypotheses may always be arranged, although the corresponding functions must in general be constructed in an ad hoc fashion.

\begin{assumption}[Regularity of \(\Phi\)]\label{regularityforPhi} 
We assume that 
\[ D\boldsymbol b \in L^1\bigl((0,T);L^{\Phi_0}(U_{\epsilon,R}+B_\lambda)\bigr), \] 
where \(\Phi_0\) is a finite superlinear Young function. We further assume that there exists a function \(\Phi:[0,\infty)\to[0,\infty)\) satisfying \eqref{normalization}, \eqref{superlinearity}, the estimate
\begin{equation}
   \Phi(s)\leq C_0\bigl(1+\Phi_0(s)\bigr) \qquad \text{for all } s\geq 0,   \label{domination}
\end{equation}
for some constant \(C_0>0\), and the following properties: 

\begin{enumerate} 

\item \(\Psi(s)\coloneqq s^{-1}\Phi(s)\in C^1([0,\infty))\). 
\item The function \(s\mapsto s\Psi'(s)\) is positive on \((0,\infty)\), nonincreasing for large \(s\), and normalized slowly varying at \(\infty\). 
\item If \(\lambda=\infty\), we additionally assume that \(s\mapsto s\Psi'(s)\) is nondecreasing for small \(s>0\). \end{enumerate} \end{assumption} 

Although the normalization assumption in item~(2) will not be needed in the present section, it will play an important role in the later parts of the paper.

Under \Cref{regularityforPhi}, we define the weight \(g\) by inverting the relation in \eqref{widetildePhi}. Proceeding in this way, we obtain a quantitative version of \Cref{main}.

\begin{lemma}\label{quantitative}
Assume the hypotheses of \Cref{main}, and suppose in addition that there exist functions \(\Phi_0\) and $\Phi$ satisfying \Cref{regularityforPhi}.

Then, defining
\begin{equation}
    g\left(t^{-\frac{1}{d}}\right)=\frac{1}{t\Psi'(t)},
    \label{definitiong}
\end{equation}
\begin{equation}
   k(\tau,x)=C_{d,g}(\operatorname{M}_{2\lambda}^g D\boldsymbol{b})(\tau,\boldsymbol{X}(\tau,0,x)),
   \label{definitionk}
\end{equation}
and, for some \(a>0\),
\begin{equation}
  G(z)\coloneqq \int_{a}^z \frac{1}{u g(u)}\,\dd u
  =-\frac{1}{d}z^d\Phi\bigg(\frac{1}{z^d}\bigg)+C_{d,a},
  \label{definitionG}
\end{equation}
the conclusions of \Cref{main} hold. In particular, $g(r)$ is bounded away from $0$ and a normalized slowly varying function at $0$. Furthermore,
$$\int_0^T\int_{U_{\epsilon, R}} k(t, x)\,\dd x \,\dd \tau \leq C_{d, \Phi_0, U_{\epsilon, R}} \bigg(\int_0^T \Vert D\boldsymbol{b} (t,\cdot)\Vert_{L^1(U_{\epsilon,R} + B_\lambda)}\,\dd\tau + \int_0^T \Vert D\boldsymbol{b}(t,\cdot) \Vert_{L^{\Phi_0}(U_{\epsilon,R} + B_\lambda)}\,\dd\tau \bigg).$$
\end{lemma}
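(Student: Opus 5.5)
The plan is to verify, in order, the three ingredients singled out in the introduction for the weight $g$ defined by \eqref{definitiong}, and then integrate the resulting Osgood differential inequality along pairs of trajectories.

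\emph{Step 1: properties of $g$ and $G$.} Set $\ell(s)=s\Psi'(s)$. By \Cref{regularityforPhi}(2), $\ell$ is positive, eventually nonincreasing and normalized slowly varying at $\infty$. Hence $g(r)=1/\ell(r^{-d})$ is continuous and positive. It is also normalized slowly varying at $0$ by \Cref{equivalence}, since $r\mapsto r^{-d}$ only rescales the Karamata exponent, and it is nondecreasing as $r\to0$. In particular $g$ is nonincreasing on some $(0,R_0)$, as \Cref{constructiong} requires. To see that $\gamma=\inf_{(0,2\lambda)}g>0$, note that $\ell$ is bounded on compacts by continuity, and near $\infty$ it is eventually nonincreasing and hence bounded. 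When $\lambda=\infty$, item (3) gives boundedness of $\ell$ near $0$, so $g$ is bounded below on all of $\mathbb R_+$ and is nondecreasing for large $r$. Thus \Cref{slowvaryingg} applies and yields \eqref{hypothesisg}, or \eqref{hypothesisg2} if $\lambda=\infty$, with a constant $C_{2\lambda,g}$.

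Next I will compute $G$ by the substitution $u=t^{-1/d}$. This gives $\frac{\dd u}{u g(u)}=-\frac{1}{d}\,t\Psi'(t)\frac{\dd t}{t}=-\frac1d\Psi'(t)\,\dd t$, so $G(z)=-\frac1d\Psi(z^{-d})+C_{d,a}$, which is the formula in \eqref{definitionG} since $\Psi(s)=s^{-1}\Phi(s)$. Superlinearity of $\Phi$ means $\Psi(s)\to\infty$, so $G(0+)=-\infty$. Moreover $G$ is continuous and increasing because $\Psi'>0$. The same substitution shows that the $\widetilde\Phi$ of \eqref{widetildePhi} built from this $g$ equals $z(\Psi(z)-\Psi(R_0^{-d}))_+\le\Phi(z)+Cz$. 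By \eqref{domination} and superlinearity of $\Phi_0$, this gives \eqref{domination1}, since $z\le C(1+\Phi_0(z))$.

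\emph{Step 2: integrability of $k$.} Apply \Cref{constructiong2} with $U=B_\lambda$ (containing the trajectories) and radius $2\lambda$ to $D\boldsymbol b(\tau,\cdot)$ for a.e. $\tau$. Then integrate in time, and use the compressibility bound in \Cref{rlf}(3) to transfer $\int_{U_{\epsilon,R}}\operatorname M^g_{2\lambda}D\boldsymbol b(\tau,\boldsymbol X(\tau,0,x))\,\dd x\le L\int_{B_\lambda}\operatorname M^g_{2\lambda}D\boldsymbol b(\tau,y)\,\dd y$. This yields the stated bound on $\int\!\!\int k$, up to bookkeeping of which enlarged ball the norms of $D\boldsymbol b$ live on. I will use $B_\lambda+B_{2\lambda}$, consistent with the statement's $B_{R+2\lambda}$.

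\emph{Step 3: the Osgood inequality.} For $x,y\in U_{\epsilon,R}$ both trajectories stay in $B_\lambda$, so their distance $\delta(t)=|\boldsymbol X(t,0,x)-\boldsymbol X(t,0,y)|$ is less than $2\lambda$. By \Cref{meanvalue} applied to $\boldsymbol b(t,\cdot)$ with radius $2\lambda$, for a.e. $t$ we get $\delta'(t)\le (k(t,x)+k(t,y))\,\delta g(\delta)$. Where $\delta>0$, the chain rule gives $\frac{\dd}{\dd t}G(\delta)=\delta'/(\delta g(\delta))\le k(t,x)+k(t,y)$, and integrating from $s$ to $t$ gives \eqref{inequalitycrude}. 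If $\delta$ vanishes at some time, then $G(\delta)=-\infty$ there and uniqueness keeps it zero afterwards, so the inequality is trivial.

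\emph{Main obstacle.} The delicate point is making Step 3 rigorous, because \Cref{meanvalue} holds only for a.e. pair of points. The trajectories are absolutely continuous only for a.e. $x$, and the set of bad pairs must not be charged along the flow. I would handle this as in Bru\`e--Nguyen. First, replace $U_{\epsilon,R}$ by a full-measure subset on which $t\mapsto\boldsymbol X(t,0,x)$ is absolutely continuous and $k(\cdot,x)\in L^1$. Then use the compressibility bound together with Fubini to show that, for a.e. $(x,y)$, the pair $(\boldsymbol X(t,0,x),\boldsymbol X(t,0,y))$ avoids the Lebesgue-null exceptional set of \Cref{meanvalue} for a.e. $t$. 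Finally, extend the inequality to all $x,y$ by redefining the flow on a null set or by continuity, since both sides are continuous in $s,t$.
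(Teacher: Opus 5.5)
Your proposal is correct and is essentially the paper's argument. You verify that the weight $g$ of \eqref{definitiong} meets the hypotheses of \Cref{meanvalue}, \Cref{slowvaryingg} and \Cref{constructiong}; you obtain $G$ and the Osgood property from the substitution $u=t^{-1/d}$ and the superlinearity of $\Phi$; you integrate the resulting Osgood differential inequality along the two trajectories; and you control $k$ through \Cref{constructiong2}.

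You are more careful than the paper in two places. You check the domination \eqref{domination1} for $\widetilde\Phi$, and you add the compressibility step that moves $\int k$ off the flow; this indeed puts the norms of $D\boldsymbol b$ on $B_{3\lambda}$ rather than on $U_{\epsilon,R}+B_\lambda$. The one loose end is the final passage to all pairs $x,y$. Continuity in $s,t$ does not remove spatial null sets, and a pairwise Fubini argument only gives almost every pair. The clean fix is to note that the weighted Lusin inequality holds at every pair of Lebesgue points of $\boldsymbol b(t,\cdot)$. The exceptional set therefore has product form, and a one-variable Fubini argument yields a full-measure subset of $U_{\epsilon,R}$ on which the estimate holds for all pairs.
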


\begin{proof}
By \Cref{regularityforPhi}, the function \(g\) defined in \eqref{definitiong} satisfies the hypotheses of \Cref{meanvalue,slowvaryingg,constructiong}. Moreover, the Osgood condition \eqref{osgoodcond} follows immediately from the next computation: for every \(\epsilon>0\),
\[
\int_0^\epsilon\frac{1}{u g(u)}\,\dd u
=
\int_0^\epsilon\frac{\Psi'(u^{-d})}{u^{d+1}}\,\dd u
=
\frac{1}{d}\int_{\epsilon^{-d}}^\infty \Psi'(v)\,\dd v
=
\frac{1}{d}\int_{\epsilon^{-d}}^\infty \frac{\dd}{\dd v}\left(\frac{\Phi(v)}{v}\right)\,\dd v
=
\infty,
\]
since \(\Phi\) is superlinear by assumption.

We may therefore proceed exactly as in the heuristic discussion from the introduction. Let \(x,y\in U_{\epsilon,R}\). By \Cref{sublevel}, we have
\[
|\boldsymbol{X}(t,0,x)|\leq \lambda
\qquad\text{and}\qquad
|\boldsymbol{X}(t,0,y)|\leq \lambda
\]
for all \(0\leq t\leq T\). As a consequence, \Cref{meanvalue} and \Cref{slowvaryingg} yield
\[
\begin{split}
\frac{\dd}{\dd t}\log|\boldsymbol{X}(t,0,x)-\boldsymbol{X}(t,0,y)|
&\leq C_d\, g(|\boldsymbol{X}(t,0,x)-\boldsymbol{X}(t,0,y)|) \\
&\quad\times \bigl(\operatorname{M}_{2\lambda}^g D\boldsymbol{b}(t,\boldsymbol{X}(t,0,x))
+\operatorname{M}_{2\lambda}^g D\boldsymbol{b}(t,\boldsymbol{X}(t,0,y))\bigr).
\end{split}
\]
The reason for differentiating \(\log |\boldsymbol{X}(t,0,x)-\boldsymbol{X}(t,0,y)|\), rather than \(|\boldsymbol{X}(t,0,x)-\boldsymbol{X}(t,0,y)|\) itself, is the renormalization property in \Cref{rlf}. This, however, does not alter the argument in any essential way.

Invoking the comparison principle for ordinary differential equations (see \cite{hartman}), we obtain
\[
\int_{\log |x-y|}^{\log |\Delta \boldsymbol{X}(t,0,x,y)|}\frac{1}{g(e^u)}\,\dd u
\leq
C_d \int_s^t\bigl(\operatorname{M}_{2\lambda}^g D\boldsymbol{b}(\tau,\boldsymbol{X}(\tau,0,x))
+\operatorname{M}_{2\lambda}^g D\boldsymbol{b}(\tau,\boldsymbol{X}(\tau,0,y))\bigr)\,\dd \tau,
\]
where
\[
|\Delta \boldsymbol{X}(t,0,x,y)|\coloneqq|\boldsymbol{X}(t,0,x)-\boldsymbol{X}(t,0,y)|.
\]
Equivalently,
\[
\int_{|x-y|}^{|\Delta \boldsymbol{X}(t,0,x,y)|}\frac{1}{u g(u)}\,\dd u
\leq
C_d \int_s^t\left(\operatorname{M}_{2\lambda}^g D\boldsymbol{b}(\tau,\boldsymbol{X}(\tau,0,x))
+\operatorname{M}_{2\lambda}^g D\boldsymbol{b}(\tau,\boldsymbol{X}(\tau,0,y))\right)\,\dd \tau.
\]
Therefore, defining \(G\) by \eqref{definitionG} and \(k\) by \eqref{definitionk}, we obtain the desired estimate in \Cref{main}, noticing that $k \in L^1((0,T); L^1(U_{\epsilon, R}))$ by \eqref{constructiong2}.  This concludes the proof.
\end{proof}

Before proceeding, however, it is worth briefly discussing the meaning of \Cref{regularityforPhi}. We do admit that \Cref{regularityforPhi} may appear somewhat technical. Nevertheless, as the next subsection---and in particular \Cref{slowdown1}---will show, it is automatically satisfied whenever \(\boldsymbol b\) fulfills the general Sobolev assumption \eqref{regularity}. 

In most applications in which we will be interested, we shall mostly consider finite superlinear Young functions \(\Phi_0\) of the form \[ \Phi_0(s)=s\Psi_0(s), \] where, for all sufficiently large \(s\), the function \(s\mapsto s\Psi_0'(s)\) is positive, nonincreasing, and normalized slowly varying at \(\infty\). Under these assumptions, one can construct the \(g\)-maximal operator when \(\lambda\) is sufficiently small; see \Cref{quantitative} below.  However, when \(\lambda\) is large or $\infty$---which is precisely the situation in \Cref{SecReg}---it becomes necessary to modify \(\Phi_0\) near the origin so as to ensure that \(s\mapsto s\Psi'(s)\) is nondecreasing for small \(s>0\). This modification is purely technical and somewhat artificial, since the interesting behavior of \(g(r)\) arises near \(r=0\), but it is nonetheless needed for the theory developed here. We also point out that, by modifying $\Phi_0$, it may no longer be convex.

Let us now show how such a modification can be performed under the above hypotheses.

\begin{proposition} \label{onPhiandPhi0}
    Let $\Phi_0 : [0,\infty) \to [0,\infty)$ be a finite superlinear Young function. Assume that for $s$ sufficiently large, $s^{-1} \Phi_0(s) =: \Psi_0$ is $C^1$, and \(s\mapsto s\Psi_0'(s)\) is positive, nonincreasing, and normalized slowly varying at \(\infty\).

    Then, there exists some increasing nonnegative $\Psi \in C^1([0,\infty))$ such that $\Psi(s) = \Psi_0(s)$ for all sufficiently large $s$, and conditions (1)--(3) of \Cref{regularityforPhi} are valid. In particular, $\Phi(z) \coloneqq  z\,\Psi(z)$ satisfies the domination estimate \eqref{domination}

    In this case, for $g(z)$ and $G(z)$ given by \eqref{definitionG} and $z$ sufficiently small,
    $$\begin{dcases}
        g(z^{-1/d}) = \frac{1}{z\Psi_0'(z)} \text{ and} \\
        G(z) = -\frac{1}{d}\Psi_0(z^{-d}) + \text{(constant)}.
    \end{dcases}$$
\end{proposition}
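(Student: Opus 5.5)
The natural strategy is to build \(\Psi\) by prescribing its derivative. Set \(h_0(s)\coloneqq s\Psi_0'(s)\), which by hypothesis is positive, nonincreasing and normalized slowly varying for \(s\geq s_0\). We look for a positive continuous \(h\) on \((0,\infty)\) with three properties:
\[
h(s)=h_0(s)\ \text{for } s\geq s_1\geq s_0,\qquad h \text{ nondecreasing on } (0,s_1/2], \qquad h(s)\leq Cs \text{ near } 0 .
\]
We then put \(\Psi(s)\coloneqq c+\int_{s_1}^s h(v)v^{-1}\,\dd v\). The constant \(c\) is chosen so that \(\Psi(s)=\Psi_0(s)\) for \(s\geq s_1\). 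The linear bound near \(0\) makes \(h(v)/v\) integrable at \(0\), so \(\Psi\) extends to a \(C^1\) function on \([0,\infty)\) with \(\Psi'(s)=h(s)/s\geq0\).

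A concrete choice for \(h\) near the origin is \(h(s)=\min\{s,\,h_0(s_1)\}\), joined to \(h_0\) on \([s_1/2,s_1]\). The junction should be made by a smooth positive interpolation; a \(C^0\) junction already suffices for \(\Psi\in C^1\). This gives \(h\) positive everywhere, nondecreasing for small \(s\), and equal to \(h_0\) at infinity. Positivity of \(h\) makes \(\Psi\) strictly increasing. Since \(\Psi(s)\) has a finite limit as \(s\to0\), adding a constant (taking \(c\) large, and then possibly modifying \(\Psi_0\) itself by a constant) ensures \(\Psi\geq0\).

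Adding a constant to \(\Psi_0\) only changes \(\Phi_0\) by a linear term, which preserves superlinearity and the relevant Orlicz spaces. Alternatively, one keeps \(\Psi=\Psi_0\) at infinity exactly and simply notes that \(\Psi\) is bounded below, which suffices for the domination estimate.

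We now verify conditions (1)–(3) of \Cref{regularityforPhi}.
\begin{enumerate}
\item This is \(\Psi\in C^1\), established above.
\item The function \(s\Psi'(s)=h(s)\) is positive, and it coincides with \(h_0\) for large \(s\). Hence it is nonincreasing and normalized slowly varying at \(\infty\); both properties depend only on behavior at infinity (\Cref{normalized0}).
\item \(h\) is nondecreasing near \(0\) by construction.
\end{enumerate}

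We also need \(\Phi(z)=z\Psi(z)\) to satisfy \eqref{normalization} and \eqref{superlinearity}. Normalization holds because \(\Psi\) is bounded near \(0\). Superlinearity follows because \(\Psi=\Psi_0\) at infinity and \(\Phi_0\) is superlinear.

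The domination \eqref{domination} splits into two regimes. For \(z\geq s_1\) we have \(\Phi=\Phi_0\). For \(z\in[0,s_1]\), \(\Phi\) is bounded by \(s_1\sup_{[0,s_1]}\Psi\). Together these give \(\Phi\leq C_0(1+\Phi_0)\).

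The formulas for \(g\) and \(G\) follow directly from the definitions. By \eqref{definitiong}, \(g(t^{-1/d})=1/(t\Psi'(t))=1/(t\Psi_0'(t))\) for \(t\) large. In the statement this is written with \(z\) in place of \(t\); the relevant regime is \(t\) large, equivalently small radius \(r=t^{-1/d}\). For \(G\), substitute \(v=u^{-d}\) in \(\int \dd u/(ug(u))\), as in the proof of \Cref{quantitative}. This gives
\[
G(z)=\frac1d\int_{z^{-d}}^{a^{-d}}\Psi'(v)\,\dd v=-\frac1d\Psi_0(z^{-d})+\text{const}
\]
once \(z^{-d}\geq s_1\).

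The main obstacle is the bookkeeping of the gluing. The interpolation must keep \(h\) positive and continuous; this is easy since both pieces are positive, so a convex combination works. One must also not require monotonicity of \(h\) on the intermediate interval, since the assumption only asks for monotonicity near \(0\) and near \(\infty\). Everything else is routine.
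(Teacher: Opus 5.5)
Your overall route matches the paper's: keep \(\Psi_0\) above a large threshold, replace it below by a piece on which \(s\Psi'(s)\) is nondecreasing, and glue in a \(C^1\) way. In fact the paper's quadratic \(\Psi(s)=\Psi_0(s_0)-\tfrac12 s_0\Psi_0'(s_0)+\tfrac{\Psi_0'(s_0)}{2s_0}s^2\) on \([0,s_0]\) is exactly your scheme with \(h(s)=h_0(s_0)(s/s_0)^2\) there. You are also right that the formula for \(g(z^{-1/d})\) concerns large, not small, \(z\).

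The gap is nonnegativity of \(\Psi\). The statement demands it, and it is also needed for \(\Phi=z\Psi\) to take values in \([0,\infty)\) as required in \Cref{regularityforPhi}. With \(c=\Psi_0(s_1)\) one has \(\Psi(0)=\Psi_0(s_1)-\int_0^{s_1}h(v)v^{-1}\,\dd v\). Your choice \(h=\min\{s,h_0(s_1)\}\) makes this integral of size about \(h_0(s_1)\log s_1\), which is of the same order as \(\Psi_0(s_1)\). Concretely, for \(\Phi_0(s)=s\log s\) at infinity one has \(h_0\equiv1\) and your \(h\) is \(\min\{s,1\}\). A direct computation then gives \(\Psi(s)=s-1\) on \([0,1]\), so \(\Psi(0)=-1\) however large \(s_1\) is.

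None of your proposed remedies fixes this:
\begin{itemize}
\item enlarging \(c\) breaks \(\Psi=\Psi_0\) at infinity;
\item shifting \(\Psi_0\) by a constant proves the statement for a different \(\Psi_0\);
\item a \(\Psi\) that is merely bounded below is not nonnegative, and it makes \(\Phi<0\) near \(0\).
\end{itemize}

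What is missing is to keep the cost \(\int_0^{s_1}h(v)v^{-1}\,\dd v\) of the near-zero piece bounded while \(\Psi_0(s_1)\to\infty\). Take \(h(s)=h_0(s_1)(s/s_1)^2\) on \((0,s_1]\). It is continuous at \(s_1\), so no interpolation interval is needed. Its cost is \(h_0(s_1)/2\), i.e.\ \(\Psi(0)=\Psi_0(s_1)-\tfrac12 s_1\Psi_0'(s_1)\). Now \(s\Psi_0'(s)\) is eventually nonincreasing and positive, hence bounded, while \(\Psi_0=\Phi_0/s\to\infty\) by superlinearity. So the ratio \(s\Psi_0'(s)/\Psi_0(s)\) tends to \(0\), and choosing \(s_1\) with this ratio below \(2\) gives \(\Psi\geq0\). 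This is precisely the paper's argument. With it in place, your checks of conditions (1)--(3), normalization, superlinearity, the domination \eqref{domination}, and the formulas for \(g\) and \(G\) go through as written.
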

\begin{proof}
   We claim that \(\Psi_0\) itself is normalized slowly varying at \(\infty\). Indeed, since \(\Psi_0(s)\to\infty\) and \(s\Psi_0'(s)\) is eventually nonincreasing, one immediately has that \[ \lim_{s\to\infty}\frac{s\Psi_0'(s)}{\Psi_0(s)}=0; \] 
    see \Cref{normalized0}.
    
    Hence one may choose \(s_0>0\) so large that $\Psi'(s_0) > 0$ and
\[ \frac{s_0\Psi_0'(s_0)}{\Psi_0(s_0)}<2. \] 
Let us then define \[ \Psi(s)= \begin{dcases} \Psi_0(s_0)-\frac12\Psi_0'(s_0)s_0+\frac{\Psi_0'(s_0)}{2s_0}s^2 & \text{for } 0\leq s\leq s_0,\\[1ex] \Psi_0(s) & \text{for } s>s_0. \end{dcases} \]
It is then immediate that \(\Phi(s)\coloneqq s\Psi(s)\) satisfies \Cref{regularityforPhi}.
\end{proof}

\begin{remark}
    \textnormal{Conversely, we mention that  if $\Psi : [0,\infty) \to [0,\infty)$ is eventually $C^1$ with \(s \mapsto s\Psi'(s)\) eventually positive and normalized slowly varying at \(\infty\), then $\Phi(s) =  s\,\Psi(s)$ is eventually convex. As a consequence, provided that $\Psi(s) \to \infty$ as $s \to \infty$, one may modify linearly $\Phi(s)$ for sufficiently small $s$ and obtain a finite superlinear Young function $\Phi_0 : [0,\infty) \to [0,\infty)$. (For instance, for some suitable $s_0>0$, one can take $\Phi_0(s) = 1_{[0,s_0]}(s)\,s\,\Psi(s_0) + 1_{(s_0,\infty)}(s)\, s\,\Psi(s)$.)}

    \textnormal{The proof of the eventual convexity of $\Phi(s)$ also revolves around the normalized slowly variation of $s \Psi'(s)$, for it is equivalent to
    \[
    \operatornamewithlimits{ess \, lim}_{s\to\infty}\frac{s\Psi''(s)}{\Psi'(s)} = -1;    \]
    see \Cref{normalized0}. Accordingly,
        \[
    \frac{\dd^2}{\dd s^2}\bigl(s\Psi(s)\bigr)
    =
    2\Psi'(s)+s\Psi''(s)
    =
    \Psi'(s)\biggl(2+\frac{s\Psi''(s)}{\Psi'(s)}\biggr)
    \]
    is positive for almost all sufficiently large $s>0$.} 
\end{remark}

\subsection{The qualitative lemma} \label{qualitativesub}

We now show that \Cref{regularityforPhi} is not a restrictive hypothesis from the point of view of vector fields satisfying \eqref{regularity}, and that this suffices to complete the proof of \Cref{main}. 

The key point is that, as already noted, the de la Vallée Poussin theorem yields a finite superlinear increasing Young function \(\Phi_0:[0,\infty)\to[0,\infty)\) satisfying \eqref{normalization}, \eqref{superlinearity}, and \begin{equation} \int_0^T \int_{U_{\epsilon,R}+B_\lambda} \Phi_0(|D\boldsymbol{b}(t,x)|)\,\dd x\,\dd t < \infty. \label{delavallee} \end{equation} In other words, \eqref{delavallee} asserts precisely that \( D\boldsymbol b \in L^{\Phi_0}\bigl((0,T)\times (U_{\epsilon,R}+B_\lambda)\bigr). \) From this one readily deduces the Bochner-space integrability \begin{equation} D\boldsymbol b \in L^1\bigl((0,T);L^{\Phi_0}(U_{\epsilon,R}+B_\lambda)\bigr). \label{integratesu} \end{equation} Indeed, for almost every \(t\in(0,T)\), an elementary convexity argument yields that the Luxemburg norm satisfies the elementary estimate \[ \|D\boldsymbol b(t,\cdot)\|_{L^{\Phi_0}(U_{\epsilon,R}+B_\lambda)} \leq \operatorname{max} \bigg\{1,\int_{U_{\epsilon,R}+B_\lambda}\Phi_0(|D\boldsymbol b(t,x)|)\,\dd x \bigg\}. \] Thence, \eqref{integratesu} follows immediately from \eqref{delavallee}. 

We have thus proved \Cref{main} under the additional assumption \Cref{regularityforPhi}. At first sight, this may appear rather special, since an arbitrary Young function \(\Phi_0\) for which \( D\boldsymbol b\in L^1\bigl((0,T);L^{\Phi_0}(U_{\epsilon,R}+B_\lambda)\bigr) \) need not satisfy \Cref{regularityforPhi} even in the asymptotic sense, as discussed in \Cref{onPhiandPhi0}. For instance, if \( D\boldsymbol b\in L^1((0,T);L_{\loc}^p(\mathbb R^d))\) for some \(p>1\), then the natural choice is \(\Phi(s)=s^p\), which clearly fails to satisfy \Cref{regularityforPhi}. This apparent difficulty is, however, misleading. One should keep in mind that one is always free to replace \(\Phi\) by a slower superlinear function. Thus, in the previous example, one may replace \(\Phi(s)=s^p\) by \(\Phi(s)=s\log_+ s\) for $s$ large, which does satisfy \Cref{regularityforPhi} (see \Cref{onPhiandPhi0}). One may in fact go even further and work with substantially slower functions, such as \( \Phi(s)=s\log\log(e+s). \) 

Accordingly, \Cref{main} will follow once we show that, given any finite superlinear Young function \(\Phi_0\) satisfying \eqref{integratesu}, one can construct a much slower function which still satisfies \eqref{normalization}, \eqref{superlinearity}, and \Cref{regularityforPhi}. It is convenient to reformulate the problem in terms of \[ \Psi_0(s)=\frac{\Phi_0(s)}{s}. \] Since \(\Phi_0\) is convex, the function \(\Psi_0\) is nondecreasing. Therefore, if one can find a function \(\Xi:\mathbb R_+\to\mathbb R_+\) satisfying the regular variation and eventual monotonicity requirements appearing in \Cref{regularityforPhi}, and such that \[ \Xi(s)\leq \Psi_0(s) \qquad \text{for all sufficiently large } s, \] then \Cref{main} follows. As it turns out, such a ``slowing down'' procedure is always possible. For the reader's convenience, we now restate the properties that will be required.

\begin{lemma}\label{slowdown1}
    Let \(\Psi_0:\mathbb{R}_+\to\mathbb{R}_+\) be continuous, nondecreasing, and unbounded. Then there exists an increasing unbounded function \(\Xi  : [0,\infty) \to [0,\infty)\) satisfying the domination estimate
    $$\text{\(\Xi(s)\leq \Psi_0(s)\) for all sufficiently large \(s\)}$$
    and the following properties:
    \begin{enumerate}[(i)]
        \item The function \(\Xi \in C^1([0,\infty))\) is increasing.
        \item The function \(s\mapsto s\Xi'(s)\) is positive everywhere and eventually nonincreasing, with \(s\Xi'(s)\to 0\) as \(s\to \infty\). 
        \item The function \(s \mapsto s \Xi'(s)\) is a normalized slowly varying function at \(\infty\).
        \item The function \(s \mapsto s \Xi'(s)\) is nondecreasing for all sufficiently small \(s\).
    \end{enumerate}
\end{lemma}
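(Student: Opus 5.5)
We would construct \(\Xi\) in the logarithmic variable \(x=\log s\). Writing \(F(x)\coloneqq\Psi_0(e^x)\) and \(H(x)\coloneqq\Xi(e^x)\), the function \(h(s)\coloneqq s\Xi'(s)\) corresponds to \(\eta(x)\coloneqq H'(x)=h(e^x)\). The requirements on \(h\) at infinity then become the following.
\begin{itemize}
\item \(\eta>0\) is eventually nonincreasing and \(\eta\to0\).
\item \(\int^\infty\eta=\infty\), which is needed for \(\Xi\) to be unbounded.
\item \(\eta'(x)/\eta(x)\to0\). By \Cref{normalized0}, since \(s h'(s)/h(s)=\eta'(x)/\eta(x)\), this is exactly normalized slow variation of \(h\).
\item \(H\le F\) for large \(x\).
\end{itemize}
So the task reduces to building a concave, increasing, unbounded \(H\) below an arbitrary nondecreasing unbounded \(F\), whose derivative decays to zero at a logarithmic rate that is vanishing relative to \(\eta\) itself.

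We would write \(\eta=e^{-\theta}\) and prescribe \(\theta'=\varepsilon\), with \(\varepsilon\ge0\) continuous and \(\varepsilon\to0\). This gives \(\eta'/\eta=-\varepsilon\to0\) and monotonicity of \(\eta\) automatically. We must then choose \(\varepsilon\) so that \(\theta\to\infty\) (hence \(\eta\to0\)), \(\int\eta=\infty\), and \(H\le F\). The construction is inductive in blocks \(I_k=[a_k,a_{k+1}]\), designed so that \(H(a_k)\le k\) and \(H\le k+1\) on \(I_k\).

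First choose \(b_k\) increasing with \(F\ge k+2\) on \([b_k,\infty)\), which is possible since \(F\) is unbounded and nondecreasing. On \(I_k\) we let \(\theta\) increase with slope at most \(1/k\), using a continuous ramp between the slopes of consecutive blocks. The block consists of two phases.
\begin{itemize}
\item In a \emph{waiting phase}, \(\theta\) increases with slope \(1/k\) for a length long enough to reach beyond \(b_{k+1}\). The contribution of this phase to \(\int\eta\) is at most \(k\,\eta(a_k)\). To keep this below \(1/2\), we impose \(\eta(a_k)\le 1/(2k)\); this is itself arranged by the previous block's decay, since decaying \(\eta\) by any prescribed factor at rate \(1/k\) takes only finite length.
\item In a \emph{climbing phase}, we set \(\varepsilon\) very small (e.g.\ \(\theta'\le 1/k\) but tending to \(0\) across blocks) and let \(\eta\) stay essentially frozen until \(H\) has gained another \(1/2\).
\end{itemize}

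Summing over blocks shows that \(H\) increases by about one unit per block, so \(\int\eta=\infty\). The waiting phases ensure that \(H\) only reaches level \(k+1\) after \(F\ge k+2\), which gives the domination. Moreover \(\theta\to\infty\), since each block lowers \(\eta\) to at most \(1/(2k)\). We expect the main obstacle to be this bookkeeping: the requirement \(\eta(a_{k+1})\le 1/(2(k+1))\), the length of the climbing phase, and the slope bound \(\varepsilon\le 1/k\) must be made mutually compatible. This works because with \(\eta\approx1/(2k)\) the climbing phase has finite length \(\approx k\), and any further decrease of \(\eta\) costs only finite length as well.

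Finally we would define \(\Xi(s)=H(\log s)\) for \(s\ge s_0\), which is \(C^1\) with \(s\Xi'(s)=\eta(\log s)>0\), giving (ii) and (iii) for large \(s\). On \([0,s_0]\) we would patch in a quadratic polynomial as in the proof of \Cref{onPhiandPhi0}: \(\Xi(s)=\alpha+\beta s^2\), with \(2\beta s_0^2=\eta(\log s_0)\) and \(\alpha\) chosen for \(C^1\) matching. Shifting \(s_0\) and \(\alpha\) if needed so that \(\alpha\ge0\), this gives \(s\Xi'(s)=2\beta s^2\), which is nondecreasing near \(0\) and positive on \((0,s_0]\). Together with (i) this establishes \(\Xi\) increasing and \(C^1\) and property (iv). Positivity of \(s\Xi'(s)\) is understood on \((0,\infty)\).
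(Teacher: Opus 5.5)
Your proposal is correct and follows essentially the same route as the paper. Both pass to $\Lambda(x)=\Xi(e^x)$, build $\log\Lambda'$ block by block as a nonincreasing piecewise affine function whose slopes tend to $0$, let $\Lambda$ gain a bounded but nondegenerate amount on each block while stretching the blocks so that $\Psi_0(e^x)$ stays ahead, and patch with a quadratic near $s=0$.

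The paper's bookkeeping is simpler because it takes the decay rate on the $j$-th block equal to $2\Lambda'(X_j)$, so every block adds between $\tfrac12(1-e^{-2})$ and $\tfrac12$ to $\Lambda$ no matter how long it is. This merges your waiting and climbing phases into one. It also avoids the one loose estimate in your sketch: the climbing rate must be taken below $2\eta$ at the end of the waiting phase, a value that may be far smaller than $1/(2k)$, so the climbing phase need not have length $\approx k$ (it is still finite, which is all you need).
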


\begin{proof}
    Before turning to the proof, let us first explain the main ideas, which become quite natural once they are viewed from the right perspective.

    We shall work extensively in logarithmic scale:
    \[
    x\coloneqq \log s,
    \]
    or equivalently \(s=e^x\). The reason is simple: if
    \[
    \Lambda(x)\coloneqq \Xi(e^x),
    \]
    then
    \[
    s\Xi'(s)\big|_{s=e^x}=e^x\Xi'(e^x)=\Lambda'(x).
    \]
    Thus the analysis of \(s\Xi'(s)\) becomes considerably simpler, since condition (ii) in \Cref{slowdown1} reduces to the statement that \(\Lambda'(x)\) is positive, eventually nonincreasing, and satisfies
    \[
    \lim_{x\to\infty}\Lambda'(x)=0.
    \]

    Moreover, by Karamata's representation theorem, condition (iii) in \Cref{slowdown1} amounts to requiring that \(s\Xi'(s)=\Lambda'(\log s)\) have the form
    \[
    \Lambda'(\log s)=\exp\bigg\{ \eta_0 + \int_T^s \frac{\epsilon(u)}{u}\,\dd u \bigg\}
    \qquad \text{for } s\geq T,
    \]
    where \(T>0\), \(\eta_0 \in \mathbb R\), and \(\epsilon\in L^\infty((T,\infty))\) with \(\epsilon(s)\to0\) as \(s\to\infty\). Returning to logarithmic scale, this becomes
    \[
    \Lambda'(x)=\exp\bigg\{\eta_0+\int_{\log T}^{x}\epsilon(e^\tau)\,\dd \tau\bigg\}.
    \]

    Consequently, it is natural to try to construct, for some large \(X>0\),
    \[
    \begin{dcases}
        \Lambda(x)=\Lambda(X)+\int_X^x\Lambda'(\tau)\,\dd \tau,\\
        \Lambda'(x)=Ce^{\phi(x)},
    \end{dcases}
    \]
    where \(C>0\) is a constant and \(\phi:[X,\infty)\to\mathbb R\) is Lipschitz with
    \[
    \operatornamewithlimits{ess\,lim}_{\tau\to\infty}\phi'(\tau)=0.
    \]
    The function \(\phi\) must satisfy a delicate balance: if \(\widetilde\Psi(x)\coloneqq \Psi_0(e^x)\), then
    \begin{itemize}
        \item \(\phi\) must be sufficiently negative so that \(\Lambda(x)\leq \widetilde\Psi(x)\) for \(x\geq X\) and \(\phi(x)\to-\infty\), in order that \(\Lambda'(x)\to0\); but
        \item \(\phi\) cannot be too negative, since we also need \(\Lambda(x)\to\infty\).
    \end{itemize}
    Reconciling these two requirements is the most delicate point of the argument. We shall achieve it by constructing \(\phi\) block by block in logarithmic scale. More precisely, for some sufficiently large \(X\), we decompose
    \begin{equation}
       [X,\infty)=\bigcup_{j=1}^\infty [X_j,X_{j+1}),
       \label{3:decomp}
    \end{equation}
    with \(X=X_1\), and on each inductively defined block \(B_j\coloneqq [X_j,X_{j+1})\) we define an affine function \(\phi\) so that all the required properties hold. Once \(\Xi(s)=\Lambda(\log s)\) has been defined for \(s\geq e^{X_1}\), we then extend it in a \(C^1\) fashion to the whole half-line \([0,\infty)\).

    \textit{Step 1: the inductive definition of the blocks and of \(\Lambda\).}
    Let us now carry out the program outlined above. Keep in mind that
    \begin{equation}
        \widetilde \Psi(x) \nearrow \infty \text{ as } x \nearrow \infty
        \label{3:limwtA}
    \end{equation}
    by assumption, since \(\widetilde \Psi(x)=\Psi_0(e^x)\).

    To begin, choose \(X_1\in\mathbb R\) so large that \(\widetilde \Psi(X_1)\geq 2\), and set \(\Lambda'_1=2\). Next choose \(X_2>X_1\) so that, writing \(\Delta_1=X_2-X_1\),
    \begin{equation}
    \begin{dcases}
        \widetilde \Psi(X_2)\geq \widetilde \Psi(X_1) + 1, \text{ and}\\
        \Lambda'_1\Delta_1\geq 1.
    \end{dcases}
    \label{3:cond1}
    \end{equation}
    On the first block \([X_1,X_2]\), define
    \begin{equation}
        \Lambda'(x)\coloneqq \Lambda'_1 \exp \{-2 \Lambda'_1 (x-X_1)\}
        \qquad \text{for } x\in [X_1,X_2].
        \label{3:K'1}
    \end{equation}

    In general, suppose that for some \(j\geq 2\), the points \(X_1,\dots,X_j\) have already been chosen and that \(\Lambda'(x)\) has been defined as a positive continuous function on \([X_1,X_j]\). Let \(\Lambda'_j\coloneqq \Lambda'(X_j)\), and choose \(X_{j+1}>X_j\) so that, for \(\Delta_j\coloneqq X_{j+1}-X_j\),
    \begin{equation}
         \begin{dcases}
        \widetilde \Psi(X_{j+1}) \geq \widetilde\Psi(X_j)+1, \text{ and}\\
        \Lambda'_j \Delta_j \geq 1.
    \end{dcases}
    \label{3:condB_j}
    \end{equation}
    Then define
    \begin{equation}
    \Lambda'(x)=\Lambda'_j \exp\{-2\Lambda'_j(x-X_j)\}
    \qquad \text{for } x\in [X_j,X_{j+1}].
    \label{3:K'j}
    \end{equation}

    These choices are possible by virtue of \eqref{3:limwtA}. Moreover, it is clear that $\Lambda_j'$ is decreasing, so $\Delta_j \geq \frac{1}{2}$ and \(X_j\nearrow \infty\). Thus, \([X_1,\infty)\) is indeed exhausted by the blocks, and \eqref{3:decomp} holds. Notice also that \(\Lambda'(x)\) is continuous on \([X_1,\infty)\).

    Finally, define \(\Lambda(X_1)=1\), and set
    \begin{equation}
        \Lambda(x)=1+\int_{X_1}^x \Lambda'(\tau)\,\dd \tau
        \qquad \text{for } x\geq X_1.
        \label{3:defK}
    \end{equation}
    We now verify that \(\Lambda\) enjoys all the required properties.

    \textit{Step 2: monotonicity and limit of \(\Lambda'\).}
    It is clear that \(\Lambda'(x)\) is positive and decreasing for \(x\geq X_1\). Furthermore, by induction, \eqref{3:K'1} and \eqref{3:K'j} yield
    \[
    \Lambda'_j=\Lambda'(X_j)
    =\Lambda'_1 \exp \bigg\{-2\sum_{n=1}^{j-1} \Lambda'_n\Delta_n\bigg\}.
    \]
    Since \(\Lambda'_n\Delta_n\geq 1\) by \eqref{3:cond1} and \eqref{3:condB_j},
    \begin{equation}
        \Lambda'_j=\Lambda'(X_j)\leq 2 \exp \bigg\{-2\sum_{n=1}^{j-1}1\bigg\}\to 0
        \qquad \text{as } j\to\infty.
        \label{3:boundK'}
    \end{equation}
    Accordingly,
    \[
    \Lambda'(x)\searrow 0
    \qquad \text{as } x\nearrow \infty.
    \]

    \textit{Step 3: divergence of \(\Lambda\).}
    Since \(\Lambda'(x)>0\), the function \(\Lambda(x)\) is increasing. Moreover, \eqref{3:K'1} and \eqref{3:K'j} show that, for every \(j\geq 1\),
    \begin{align}
    \Lambda(X_{j+1})-\Lambda(X_j)
    &= \int_{X_j}^{X_{j+1}} \Lambda'(\tau)\,\dd \tau \nonumber\\
    &= \int_{0}^{\Delta_j} \Lambda'_j e^{-2\Lambda'_j\tau}\,\dd \tau \nonumber\\
    &= \frac{1}{2}(1 - e^{-2 \Lambda'_j  \Delta_j}).
    \label{3:Kincrease}
    \end{align}
    In particular, since \(\Lambda(X_1)=1\) by \eqref{3:defK} and \(\Lambda'_j\Delta_j\geq 1\),
    \[
    \Lambda(X_j)
    = 1 + \frac{1}{2} \sum_{n=1}^{j-1} \bigl(1 - e^{-2\Lambda'_n \Delta_n}\bigr)
    \geq 1 + \frac{1}{2} \sum_{n=1}^{j-1}\bigl(1-e^{-2}\bigr)
    \geq cj
    \qquad \text{as } j\to\infty,
    \]
    for some constant \(c>0\). Therefore,
    \[
    \Lambda(x)\nearrow\infty
    \qquad \text{as } x\nearrow\infty.
    \]

    \textit{Step 4: the dominance \(\Lambda\leq \widetilde \Psi\).}
    In order to show that \(\Lambda(x)\leq \widetilde \Psi(x)\) for \(x\geq X_1\), we claim that
    \begin{equation}
        \Lambda(X_{j+1}) \leq \widetilde \Psi(X_j)
        \qquad \text{for every } j\geq 1.
        \label{3:ineqKA}
    \end{equation}

    The verification of \eqref{3:ineqKA} uses the crude bound, obtained from \eqref{3:Kincrease},
    \begin{equation}
        \Lambda(X_{j+1}) - \Lambda(X_j) =  \frac{1}{2}(1 - e^{-2 \Lambda'_j  \Delta_j}) \leq \frac{1}{2}.
        \label{3:Kincrease2}
    \end{equation}
    Thus \(\Lambda(X_2)\leq \Lambda(X_1)+\frac{1}{2}=\frac{3}{2}\), while \(\widetilde \Psi(X_1)\geq 2\) by construction. In general, assuming the claim holds up to index \(j\), we obtain
    \[
    \Lambda(X_{j+1})
    \leq \Lambda(X_j)+\frac{1}{2}
    \leq \widetilde \Psi(X_{j-1})+\frac{1}{2}
    \leq \widetilde \Psi(X_j)-\frac{1}{2},
    \]
    by \eqref{3:condB_j}. This proves \eqref{3:ineqKA}.

    Consequently, for any \(x\in [X_j,X_{j+1}]\),
    \[
    \Lambda(x)\leq \Lambda(X_{j+1})\leq \widetilde \Psi(X_j)\leq \widetilde \Psi(x),
    \]
    and therefore
    \begin{equation}
        \Lambda(x)\leq \widetilde \Psi(x)
        \qquad \text{for all } x\geq X_1.
        \label{3:dominance}
    \end{equation}

    \textit{Step 5: definition of \(\Xi\) and regular variation of \(\Xi'\).}
    Set \(T_1=e^{X_1}\), and define \(\Xi(s)\coloneqq \Lambda(\log s)\) for \(s\geq T_1\). Recall that
    \[
    s\Xi'(s)=\Lambda'(\log s).
    \]
    For \(0\leq s\leq T_1\), define instead
    \[
    \Xi(s)=\bigg(\frac{s}{T_1}\bigg)^2.
    \]
    Observe that
    \[
    \begin{dcases}
        \displaystyle{\lim_{s\rightarrow T_1^-}}\Xi'(s) = \frac{2}{T_1},\\
        \displaystyle{\lim_{s\rightarrow T_1^+}}\Xi'(s) = \frac{1}{T_1}\Lambda'(\log T_1) = \frac{1}{T_1}\Lambda'_1 = \frac{2}{T_1}.
    \end{dcases}
    \]
    Since also
    \[
    \lim_{s\rightarrow T_1^-}\Xi(s)=1
    \qquad\text{and}\qquad
    \lim_{s\rightarrow T_1^+}\Xi(s)=\Lambda(X_1)=1,
    \]
    it follows that \(\Xi\in C^1([0,\infty))\). Moreover, for \(0<s<T_1\), one has \(s\Xi'(s)=2\Xi(s)\), so that this quantity is increasing near the origin as well.

    Hence, in view of the preceding steps, properties (i), (ii), and (iv) in the statement are already verified. It remains to prove (iii), namely that \(s\Xi'(s) \in \operatorname {RV}_{0}(\infty)\), for, by construction, $s\Xi'(s)$ will be normalized. Returning to logarithmic scale \(s\Xi'(s)=\Lambda'(\log s)\), the proof of the slow variation of $s \Xi'(s)$  amounts to establishing that
    \begin{equation}
       \lim_{x \to \infty} \frac{\Lambda' (x+c)}{\Lambda' (x)} = 1
       \label{3:finalgoal}
    \end{equation}
    for every \(c \in \mathbb R\). Fix such a \(c\).

    Set \(\phi(x)\coloneqq \log \Lambda'(x)\). Then \eqref{3:K'1} and \eqref{3:K'j} imply that
    \[
    \phi(x)=\log \Lambda'_j - 2\Lambda'_j (x- X_j)
    \qquad \text{for } x \in [X_j, X_{j+1}].
    \]
    Since \(\phi\) is continuous, it follows that, in the sense of distributions on \(\mathcal D'([X_1,\infty))\), \(\phi'\) is the step function given by
    \[
    \phi'(x) = -2\Lambda'_j
    \qquad \text{for } x \in [X_j, X_{j+1}].
    \]
    As a consequence of \eqref{3:boundK'},
    \[
    \operatornamewithlimits{ess\, lim}_{x \to \infty} \phi'(x) = 0.
    \]

    We may therefore write
    \[
    \log\frac{\Lambda'(x+c)}{\Lambda'(x)} = \phi(x+c)-\phi(x)=\int_x^{x+c}\phi'(s)\,\dd s,
    \]
    whence
    \[
    \bigg| \log\frac{\Lambda'(x+c)}{\Lambda'(x)}\bigg|
    \leq
    |c|\, \operatornamewithlimits{ess\,sup}_{s \geq x - |c|} |\phi'(s)|
    =o(1)
    \qquad \text{as } x\to\infty.
    \]
    This proves \eqref{3:finalgoal}, and hence establishes (iii).

    The proof is complete.
\end{proof}

Combining \Cref{quantitative} and \Cref{slowdown1}, we obtain \Cref{main}.

\begin{proof}[Proof of \Cref{lusin}]
    We heavily utilize the results of \Cref{main}: for every $\epsilon>0$ there exists a measurable set $U_{\epsilon,R}\subset B_R$ such that $\mathcal{L}^d(B_R\setminus U_{\epsilon,R})<\epsilon/2$, a function $k\in L^1((0,T)\times U_{\epsilon,R})$, and for every $x,y \in U_{\epsilon,R}$ and all $0\leq s\leq t \leq T$, it holds the inequality
    \[|\boldsymbol{X}(t,0,x)-\boldsymbol{X}(t,0,y)|
    \leq
    G^{-1}\left(G\bigl(|\boldsymbol{X}(s,0,x)-\boldsymbol{X}(s,0,y)|\bigr)
    +
    \int_s^t\bigl(k(\tau,x)+k(\tau,y)\bigr)\,\dd \tau\right).\]
    In the latter, we have used that $G$ has an inverse function for it is increasing.
    
By Chebyshev's inequality, the set
\[
W_{\epsilon,R}
:=
\left\{
x\in U_{\epsilon,R}:
\int_0^T k(\tau,x)\,\dd\tau
\leq
\frac{2}{\epsilon}
\|k\|_{L^1((0,T)\times U_{\epsilon,R})}
\right\}
\]
satisfies
\[
\mathcal L^d(U_{\epsilon,R}\setminus W_{\epsilon,R})
\leq
\frac{\epsilon}{2}.
\]
Therefore, setting
\(\Omega_{\epsilon,R}:=W_{\epsilon,R}\), we have
\[
\mathcal L^d(B_R\setminus\Omega_{\epsilon,R})
\leq
\mathcal L^d(B_R\setminus U_{\epsilon,R})
+
\mathcal L^d(U_{\epsilon,R}\setminus W_{\epsilon,R})
<
\epsilon.
\]
Defining
\begin{equation}
    \omega^\epsilon(r)
    :=
    G^{-1}\left(
        G(r)
        +
        \frac{4}{\epsilon}
        \|k\|_{L^1((0,T)\times U_{\epsilon,R})}
    \right),
    \label{definitionomega}
\end{equation}
we obtain the desired conclusion.
\end{proof}

\section{Examples and counterexamples}\label{section3}

\subsection{Examples of moduli of continuity in \texorpdfstring{\Cref{lusin}}{}}

We now prove \Cref{examples}.

\begin{proof}[Proof of \Cref{examples}]
The argument is entirely computational. In each assertion of \Cref{examples}, a superlinear function \(\Phi_{j}\) as in \Cref{regularityforPhi} is implicitly present. The task is to determine the corresponding weight \(g_j(r)\) via \eqref{definitiong}, the associated function \(G_j\) via \eqref{definitionG}, and finally the modulus
\[
\omega_j^\epsilon(z)=G_j^{-1}\bigl(G_j(z)+C_{\epsilon,\lambda,d}\bigr),
\]
where $C_{\epsilon, \lambda, d} \to \infty$ as $\epsilon \to 0_+.$

In general, the exact form of \(\omega_j^\epsilon\) is difficult to compute explicitly. We therefore derive asymptotic estimates as \(z\to0_+\). Notice that \(\omega_j^\epsilon(z)>z\) and \(\omega_j^\epsilon(z)\to0\) as \(z\to0_+\). Hence
\begin{equation}
    \int_{\log z}^{\log \omega_j^\epsilon(z)} \frac{\dd v}{g_j(e^v)}
    =
    \int_z^{\omega_j^\epsilon(z)} \frac{\dd u}{u g_j(u)}
    =
    G_j(\omega_j^\epsilon(z)) - G_j(z)
    =
    C_{\epsilon,\lambda,d}.
    \label{formulaomega0}
\end{equation}
By the mean value theorem, there exists \(\xi=\xi(z)\in(\log z,\log \omega_j^\epsilon(z))\) such that
\[
\log\bigg(\frac{\omega_j^\epsilon(z)}{z}\bigg)
=
C_{\epsilon,\lambda,d}\, g_j(e^\xi).
\]
Since \(g_j(u)\) is decreasing as \(u\to0_+\), we infer that
\[
\log\bigg(\frac{\omega_j^\epsilon(z)}{z}\bigg)
\leq
C_{\epsilon,\lambda,d}\, g_j(z)
\qquad \text{as } z\to0_+,
\]
or equivalently,
\begin{equation}
    \omega_j^\epsilon(z)\leq z\,\exp \bigl(C_{\epsilon,\lambda,d}\,g_j(z)\bigr)
    \qquad \text{as } z\to0_+.
    \label{formulaomega}
\end{equation}

Before analyzing the various cases appearing in statement \Cref{examples}, let us record two general observations. First, each function
\[
\Psi_{0,j}(z)=z^{-1}\Phi_{0,j}(z)
\]
in the cases below is such that $z\Psi_{0,j}'(z)$ is a positive, eventually decreasing, and normalized slowly varying function at \(\infty\). Consequently, according to \Cref{onPhiandPhi0}, if we modify $\Psi_{0,j}(z)$ near $z=0$, we derive a $\Phi_j(z)$ for which the conditions of \Cref{regularityforPhi} are all satisfied.  Second, in view of \eqref{definitiong} and \eqref{definitionG}, the interesting regime of \(\omega_j^\epsilon(z)\) depends only on the behavior of \(\Phi_j(z)\) for large \(z\). Therefore, in what follows, we shall therefore only be concerned with the behavior of the functions \(\Phi_{0,j}(z) = \Phi_j(z)\) for large \(z\).

\textit{Step 1: the \(L\log^b L\) case.}
Here
\[
\Phi_1(z)=z\log^b z
\qquad \text{for all sufficiently large } z.
\]
Hence, for \(t\) sufficiently large,
\[
g_1\left(t^{-1/d}\right)\sim \frac{1}{b}\log^{1-b}(t),
\]
so that
\[
g_1(z)\sim C_{b,d}\,\log^{1-b}(1/z)
\qquad \text{as } z\to0_+.
\]
Therefore,
\[
\omega_1^\epsilon(z)\leq z\,\exp\bigl(2C_{b,d,\epsilon}\log^{1-b}(1/z)\bigr)
\qquad \text{as } z\to0_+.
\]
Since \(\log^{1-b}(1/z)=o(\log(1/z))\) as \(z\to0_+\), it follows that for every \(0<\alpha<1\),
\[
\omega_1^\epsilon(z)\leq z^\alpha
\qquad \text{for all sufficiently small } z>0.
\]

\textit{Step 2: the \(L(\log/\log^{(n)})L\) case.}
Let us now consider
\[
\Phi_2(z)=\frac{z\log z}{\log^{(n)}z}
\qquad \text{for all sufficiently large } z.
\]

We begin with the case \(n=2\). In this case,
\[
g_2\left(t^{-1/d}\right)\sim \frac{(\log\log t)^2}{\log\log t-1}
\qquad \text{as } t\to\infty,
\]
and hence
\[
g_2(z)\sim \log\log(1/z)
\qquad \text{as } z\to0_+.
\]
Therefore,
\[
\omega_2^\epsilon(z)\leq z\,\log(1/z)^{2C_{\epsilon,\lambda,d}}
\qquad \text{as } z\to0_+.
\]

For \(n\geq3\), the computation is analogous. One finds
\[
g_2(t^{-1/d})\sim \log^{(n)} t
\qquad \text{as } t\to\infty,
\]
and hence
\[
g_2(z)\sim \log^{(n)}(1/z)
\qquad \text{as } z\to0_+.
\]
Consequently,
\[
\omega_2^\epsilon(z)\leq z\,(\log^{(n-1)}(1/z))^{2C_{\epsilon,\lambda,d}}
\qquad \text{as } z\to0_+.
\]

\textit{Step 3: the \(L\log\log L\) case.}
Here
\[
\Phi_3(z)=z\log\log z
\qquad \text{for all sufficiently large } z.
\]
Thus, for \(t\) sufficiently large,
\[
g_3\left(t^{-1/d}\right)\sim \log t,
\]
which means that
\[
g_3(z)\sim \log(1/z)
\qquad \text{as } z\to0_+.
\]
In this case, rather than using \eqref{formulaomega}, it is more convenient to use \eqref{formulaomega0}, which gives
\[
\log\log(1/z)-\log\log(1/\omega_3^\epsilon(z))
\sim C_{\epsilon,\lambda,d}
\qquad \text{as } z\to0_+.
\]
After rearranging, we obtain
\[
 \omega^\epsilon_3(z)\leq z^{\alpha_\epsilon}
\qquad \text{as } z\to0_+,
\]
where
\[
\alpha_\epsilon\coloneqq \exp(-2C_{\epsilon,\lambda,d}).
\]
Since \(C_{\epsilon,\lambda,d}\to\infty\) as \(\epsilon\to0\), the conclusion follows.

\textit{Step 4: the \(L\log^{(n)}L\) case.}
Finally, consider
\[
\Phi_4(z)=z\log^{(n)}z
\qquad \text{for all sufficiently large } z.
\]
The computation is again analogous and yields
\[
\log^{(n)}(1/z)-\log^{(n)}(1/\omega^\epsilon_4(z))
\leq 2C_{\epsilon,\lambda}
\qquad \text{as } z\to0_+.
\]
Using the identity \(\log^{(n)}z=\log(\log^{(n-1)}z)\), we obtain
\[
\log\left(\frac{\log^{(n-1)}(1/z)}{\log^{(n-1)}(1/\omega^\epsilon_4(z))}\right)
\leq 2C_{\epsilon,\lambda}
\qquad \text{as } z\to0_+.
\]
This gives the desired estimate, with
\(
\alpha_\epsilon=\exp(-2C_{\epsilon,\lambda}).
\)
\end{proof}

\begin{remark}\label{Phi5}
\textnormal{Some remarks are in order.}
\begin{itemize}
    \item \textnormal{(A lower bound for \(\omega^\epsilon(z)\)). Notice that \eqref{formulaomega0} also yields
\[
\log\bigg(\frac{\omega_j^\epsilon(z)}{z}\bigg)
\geq C_{\epsilon,\lambda,d}\, g_j(\omega_j^\epsilon(z))
\qquad \text{as } z\to0_+,
\]
which may be used to derive lower bounds for \(\omega_j^\epsilon\), if desired.}

\textnormal{For instance, in the \(L\log^b L\) case, this shows that
\[
\lim_{z\to0_+}\frac{\omega_1^\epsilon(z)}{z}=\infty,
\]
so that the resulting modulus of continuity does not belong to the Lipschitz regime.}

\textnormal{Furthermore, in the \(L(\log/\log^{(n)})L\) case, one obtains
\begin{equation}
    \frac{\omega_2^\epsilon(z)}
    {\log^{(n-1)}(1/\omega_2^\epsilon(z))^{C_{\epsilon,\lambda,d}/2}}
    \geq z
    \qquad \text{as } z\to0_+.
    \label{lower}
\end{equation}
Combined with the upper bound already obtained for \(\omega_2^\epsilon\), this implies that
\begin{equation}
    \omega_2^\epsilon(z)\geq c\, z\,\log^{(n-1)}(1/z)^{C_{\epsilon,\lambda,d}/2}
    \qquad \text{as } z\to0_+,
    \label{lower0}
\end{equation}
for some constant \(c>0\).}

\textnormal{Similar lower bounds can be obtained in $L \log \log L$ and $L \log^{(n)} L$ cases.}

\item \textnormal{(A modulus of continuity that does not satisfy the Dini condition).
Although all the explicit moduli in \Cref{examples} satisfy the Dini condition
\[
\int_0^1\frac{\omega(s)}{s}\,\dd s<\infty,
\]
this property is not guaranteed by our theory in general.}

\textnormal{To see this, let \(\slog\) denote the Kneser analytic superlogarithm, namely the inverse of the tetration \({}^{x}e = \sexp(x)\); see \cite{Kneser1950}. That being so, it satisfies the functional identity
\[\slog(e^z)=\slog(z)+1,\] which implies that $\slog$ is a normalized slowly varying function at \(\infty\). Indeed, given \(x>1\), choose \(m=m(x)\) so that
\(u=\log^{(m)}x\in[1,e)\). Iterating the functional identity gives
\(\slog(x)=m+\slog(u)\), and hence \(\slog(x)\to\infty\) as
\(x\to\infty\). Differentiating the same identity yields
\[
\slog'(x)
=
\frac{\slog'(u)}
{x\log x\,\log^{(2)}x\cdots\log^{(m-1)}x}.
\]
Since \(u\in[1,e)\), it follows that \(x\slog'(x)\) is bounded and therefore
\[
\frac{x\slog'(x)}{\slog(x)}\to0
\qquad\text{as }x\to\infty.
\]
This shows that \(\slog\) is normalized slowly varying at infinity. The same functional relation, together with a direct differentiation, shows that the function introduced below satisfies the regularity requirements of \Cref{regularityforPhi}.}

\textnormal{Consider, for sufficiently large \(s\), the Young function
\[
\Phi_{5,0}(s)
=
s\,\slog_+\bigl(\log_+^{(2)}s\bigr),
\]
modified near the origin if necessary, and let
\(\Psi_5(s)=s^{-1}\Phi_{5,0}(s)\). Denote by
\(C_{\epsilon,\lambda,d}\) the constant appearing in the definition of
\(\omega_5^\epsilon\). From \eqref{definitionG} and
\eqref{definitionomega}, one has
\[
\Psi_5\bigl((\omega_5^\epsilon(z))^{-d}\bigr)
=
\Psi_5(z^{-d})-dC_{\epsilon,\lambda,d}.
\]
Since \(C_{\epsilon,\lambda,d}\to\infty\) as \(\epsilon\to0_+\), we may choose \(\epsilon>0\) so small that
\(dC_{\epsilon,\lambda,d}\geq2\). By monotonicity, \(\omega_5^\epsilon\) is then bounded from below by the modulus corresponding to a shift equal to \(2\).}

\textnormal{To compute \(\omega_5^\epsilon\), let us recall that
\(\sexp(a-n)=\log^{(n)}(\sexp(a))\). As a result,
\[
\omega_5^\epsilon(z)
\geq
\left[
\exp^{(2)}
\sexp\bigl(
\slog(\log^{(2)}(z^{-d}))-2
\bigr)
\right]^{-1/d}
=
\bigl[\log d+\log^{(2)}(z^{-1})\bigr]^{-1/d}.
\]
Consequently, for every \(\delta>0\),
\[
\int_0^\delta\frac{\omega_5^\epsilon(z)}{z}\,\dd z
\geq
\int_{|\log\delta|}^\infty
\frac{\dd u}{(\log d+\log u)^{1/d}}
=
\infty.
\]
Thus, the modulus of continuity furnished by \Cref{lusin} need not satisfy the Dini condition.}

\textnormal{More generally, let \(\Phi\) be another admissible function, set
\(\Psi(s)=s^{-1}\Phi(s)\), and assume that
\[
\Psi'(s)=o\bigl(\Psi_5'(s)\bigr)
\qquad\text{as }s\to\infty.
\]
Then \Cref{monotone} below shows that the modulus generated by \(\Phi\) is asymptotically larger than \(\omega_5^\epsilon\), and therefore also fails to satisfy the Dini condition. Finally, the sharpness example in \Cref{exampleprop} shows that, in general, $\boldsymbol{X}(t,0,\cdot)$ is not Dini continuous in the ``good regions'' $\Omega_{\epsilon, R}$.}

\end{itemize}
\end{remark}

\begin{proposition}[Comparison of the induced moduli of continuity]\label{monotone} 
Let \(\Phi_1,\Phi_2:[0,\infty)\to[0,\infty)\) be \(C^1\) functions satisfying \eqref{normalization} and \eqref{superlinearity}. Assume that \begin{equation} \left(\frac{\Phi_2(s)}{s}\right)' = o\!\left(\left(\frac{\Phi_1(s)}{s}\right)'\right) \qquad \text{as } s\to\infty. \label{condPhi} \end{equation}

For \(j=1,2\), set \( \Psi_j(s)\coloneqq s^{-1}\Phi_j(s), \) and assume that the function \(s\mapsto s\Psi_j'(s)\) is positive on \((0,\infty)\) and bounded on $[1,\infty)$. Let \(g_j\), \(G_j\), and \(\omega_j^{\epsilon_j}\) be defined, respectively, by \eqref{definitiong}, \eqref{definitionG}, and \eqref{definitionomega}, where \(\epsilon_j>0\). 

Then, for every \(\epsilon_1,\epsilon_2>0\), \[ \frac{\omega_1^{\epsilon_1}(r)}{\omega_2^{\epsilon_2}(r)} \to 0 \qquad \text{as } r\to 0_+. \] 
\end{proposition}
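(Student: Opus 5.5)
The plan is to work with the explicit formula for $G_j$ from \Cref{quantitative} and pass to logarithmic scale, as in the proof of \Cref{slowdown1}. Since $G_j(z)=-\frac1d\Psi_j(z^{-d})+\text{const}$, the definition \eqref{definitionomega} $\omega_j^{\epsilon_j}(r)=G_j^{-1}(G_j(r)+C_j)$, with $C_j=\frac{4}{\epsilon_j}\|k_j\|_{L^1}>0$, can be rewritten as
\[
\Psi_j\bigl(\omega_j^{\epsilon_j}(r)^{-d}\bigr)=\Psi_j(r^{-d})-dC_j .
\]
This makes sense because $\Psi_j'>0$, so $\Psi_j$ is strictly increasing. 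It also uses $\Psi_j(s)\to\infty$, which follows from superlinearity \eqref{superlinearity}. Put $x=\log(r^{-d})$, $x_j=\log(\omega_j^{\epsilon_j}(r)^{-d})$ and $\Lambda_j(y)=\Psi_j(e^y)$. Then $\Lambda_j'(y)=s\Psi_j'(s)|_{s=e^y}$, and the identity above becomes
\[
\int_{x_j}^{x}\Lambda_j'(y)\,\dd y=dC_j .
\]
The claim $\omega_1^{\epsilon_1}(r)/\omega_2^{\epsilon_2}(r)\to0$ is equivalent to $x_1-x_2\to\infty$ as $x\to\infty$.

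\textbf{Step 1 ($x_j\to\infty$).} We have $\Lambda_j(x_j)=\Lambda_j(x)-dC_j\to\infty$ as $x\to\infty$. Since $\Lambda_j$ is continuous and hence bounded on bounded sets, this forces $x_j\to\infty$.

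\textbf{Step 2 (translate \eqref{condPhi}).} Hypothesis \eqref{condPhi} says $\Psi_2'(s)=o(\Psi_1'(s))$, which is the same as $\Lambda_2'=o(\Lambda_1')$ at $+\infty$. So for every $\delta>0$ there is $Y_\delta$ with $\Lambda_2'\le\delta\Lambda_1'$ on $[Y_\delta,\infty)$. Let $M:=\sup_{s\ge1}s\Psi_1'(s)<\infty$, so that $\Lambda_1'\le M$ on $[0,\infty)$.

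\textbf{Step 3 (comparison).} Fix $\delta>0$ with $\delta C_1<C_2$, and take $x$ so large that $x_1,x_2\ge\max(Y_\delta,0)$. Suppose $x_2\ge x_1$. Then
\[
dC_2=\int_{x_2}^x\Lambda_2'\le\delta\int_{x_2}^x\Lambda_1'\le\delta\int_{x_1}^x\Lambda_1'=\delta dC_1,
\]
a contradiction. Hence $x_2<x_1$, and
\[
\int_{x_2}^{x_1}\Lambda_1'=\int_{x_2}^{x}\Lambda_1'-dC_1\ge\frac{dC_2}{\delta}-dC_1 .
\]
Using $\Lambda_1'\le M$, this gives $x_1-x_2\ge\bigl(dC_2/\delta-dC_1\bigr)/M$. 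Since $\delta$ is arbitrary, $\liminf_{x\to\infty}(x_1-x_2)=\infty$. Exponentiating yields $\omega_1^{\epsilon_1}(r)/\omega_2^{\epsilon_2}(r)=e^{-(x_1-x_2)/d}\to0$.

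The only delicate point is Step 3. There one must rule out the ordering $x_2\ge x_1$ before the $o(\cdot)$ comparison can be integrated over $[x_2,x_1]$. The boundedness of $s\Psi_1'(s)$ is then exactly what converts a large $\Lambda_1'$-integral into a large length $x_1-x_2$.
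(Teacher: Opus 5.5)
Your proof is correct and is the paper's argument in the logarithmic variable $y=-d\log u$. This substitution turns $\int_r^{\omega_j^{\epsilon_j}(r)}\frac{\dd u}{u g_j(u)}$ into $\frac1d\int_{x_j}^{x}\Lambda_j'(y)\,\dd y$; your $\delta$ is the paper's $\kappa$, your bound $\Lambda_1'\le M$ is the paper's $\gamma=\inf_{(0,1)}g_1=1/M>0$, and the two steps coincide (first force $\omega_1^{\epsilon_1}<\omega_2^{\epsilon_2}$, then turn the $\Lambda_1'$-integral gap into a logarithmic gap). One small fix: in Step 1 the right justification is that $\Lambda_j$ is increasing, not that it is continuous, since $x_j$ is not a priori bounded below.
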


In other words, the moduli of continuity generated by the theory from \(\Phi_1\) are ``much better'' than those generated from \(\Phi_2\).

\begin{proof}
    Since the modulus of continuity provided by \Cref{lusin} is of the form \[\omega_j^{\epsilon_j}(r) = G_j^{-1}(G_j(r) + \delta_j)\] for some $\delta_j>0$, the desired conclusion will be derived once we show that, for any \(\delta_1,\delta_2>0\),
    \[
    \lim_{r \to 0_+}\frac{G_1^{-1}(\delta_1 + G_1(r))}{G_2^{-1}(\delta_2 + G_2(r))}=0.
    \]
    To this end, for \(j=1,2\), define
    \[
    \ell_j^{\delta_j}(r)\coloneqq G_j^{-1}(\delta_j+G_j(r)).
    \]
    Then \(\ell_j^{\delta_j}(r)>r\), and
    \[
    \int_r^{\ell_j^{\delta_j}(r)} \frac{\dd u}{u g_j(u)}
    =
    G_j(\ell_j^{\delta_j}(r))-G_j(r)
    =
    \delta_j.
    \]

    Since the Osgood condition \eqref{osgoodcond} holds, we have \(\ell_j^{\delta_j}(r)\to0\) as \(r\to0_+\). On the other hand, by the definition of \(g_j\) through \eqref{definitiong}, the assumption \eqref{condPhi}
    implies that \(g_1=o(g_2)\) as \(r\to0_+\). Equivalently,
    \[
    \frac{1}{u g_1(u)} \gg \frac{1}{u g_2(u)}
    \qquad \text{as } u\to0_+.
    \]
    More precisely, given any \(\kappa>0\), there exists \(0<r_0<1\) such that
    \[
    \frac{1}{u g_1(u)}>\frac{1}{\kappa}\frac{1}{u g_2(u)}
    \]
    whenever \(0<r<r_0\) and \(r<u<\ell_2^{\delta_2}(r)\).

    Therefore, for such \(r\),
    \[
    \int_r^{\ell_1^{\delta_1}(r)} \frac{\dd u}{u g_1(u)}=\delta_1
    \qquad\text{and}\qquad
    \int_r^{\ell_2^{\delta_2}(r)} \frac{\dd u}{u g_1(u)}
    >
    \frac{\delta_2}{\kappa}.
    \]
    In particular, if \(\kappa<\delta_2/\delta_1\), then for all sufficiently small \(r\) one has
    \(
    \ell_2^{\delta_2}(r)>\ell_1^{\delta_1}(r).
    \)
    Moreover,
    \[
    \int_{\ell_1^{\delta_1}(r)}^{\ell_2^{\delta_2}(r)} \frac{\dd u}{u g_1(u)}
    \geq
    \frac{\delta_2}{\kappa}-\delta_1.
    \]

    Now let
    \(
    \gamma=\inf_{(0,1)} g_1 >0.
    \)
    Then,
    \[
    \int_{\ell_1^{\delta_1}(r)}^{\ell_2^{\delta_2}(r)} \frac{\dd u}{u g_1(u)}
    \leq
    \frac{1}{\gamma}\int_{\ell_1^{\delta_1}(r)}^{\ell_2^{\delta_2}(r)}\frac{\dd u}{u}
    =
    \frac{1}{\gamma}\log\!\left(\frac{\ell_2^{\delta_2}(r)}{\ell_1^{\delta_1}(r)}\right).
    \]
    Hence
    \[
    \log\!\left(\frac{\ell_2^{\delta_2}(r)}{\ell_1^{\delta_1}(r)}\right)
    \geq
    \gamma\left(\frac{\delta_2}{\kappa}-\delta_1\right)
    \qquad \text{for } 0<r<r_0.
    \]
    It follows that
    \[
    \liminf_{r\to0_+}\log\!\left(\frac{\ell_2^{\delta_2}(r)}{\ell_1^{\delta_1}(r)}\right)
    \geq
    \gamma\left(\frac{\delta_2}{\kappa}-\delta_1\right).
    \]
    Since \(\kappa>0\) is arbitrary, we conclude that
    \[
    \lim_{r\to0_+}\log\!\left(\frac{\ell_2^{\delta_2}(r)}{\ell_1^{\delta_1}(r)}\right)=\infty,
    \]
    and therefore
    \[
    \frac{\ell_1^{\delta_1}(r)}{\ell_2^{\delta_2}(r)}\to0
    \qquad\text{as } r\to0_+,
    \]
    as we wanted to show.
\end{proof}

\subsection{An example of sharpness of \texorpdfstring{\Cref{lusin}}{}}

We now prove the optimality of the moduli of continuity on the class of regularly varying functions described in \Cref{examples}.

\begin{proof}[Proof of \Cref{examples}]
Before turning to the details, let us briefly explain the strategy. Consider any continuous increasing function \(\Phi:[0,\infty)\to[0,\infty)\) satisfying \eqref{normalization}, \eqref{superlinearity}, and \Cref{regularityforPhi}. 
Let \(g\) and \(G\) be defined, respectively, by \eqref{definitiong} and \eqref{definitionG}. We then consider the ordinary differential equation
\begin{equation}
  \frac{\dd \boldsymbol{X}}{\dd t}(t,0,x) = \boldsymbol{b}(\boldsymbol{X}(t,0,x)),
  \label{4.ode}
\end{equation}
where the autonomous vector field \(\boldsymbol{b} : \mathbb R^d \to \mathbb R^d\) is given by
\begin{equation}
    \boldsymbol{b}(x)=
    \begin{dcases}
        g(|x|)\eta(|x|)\,x
        =\dfrac{\eta(|x|)}{|x|^{-d-1}\Psi'(|x|^{-d})}\,\dfrac{x}{|x|}
        & \text{if } x\neq0,\\[1ex]
        0 & \text{if } x=0,
    \end{dcases}
\end{equation}
and \(\eta\) is a smooth cut-off function supported near the origin. Clearly,
\[
\boldsymbol{b}\in C^1(\mathbb R^d\setminus\{0\})\cap C_c(\mathbb R^d),
\]
so that \eqref{4.ode} has a unique solution for every nonzero initial datum. Moreover, \(\boldsymbol{b}\) is radial, and therefore the corresponding flow \(\boldsymbol{X}(t,0,x)\) is radial as well. Since \(g(r)\) satisfies the Osgood condition \eqref{osgoodcond}, the solution of \eqref{4.ode} exists and is unique globally for any initial datum.

Accordingly, \(\boldsymbol{X}(t,0,x)\) has the form
\[
\boldsymbol X(t,0,x) =
\begin{dcases}
    \rho(t,|x|)\dfrac{x}{|x|} & \text{if } x\neq0,\\[1ex]
    0 & \text{if } x=0,
\end{dcases}
\]
where
\(
\rho\in C^1([0,\infty)\times(0,\infty))\cap C([0,\infty)^2)
\)
solves the one-dimensional equation
\[
\frac{\dd \rho}{\dd t}(t,r)=\boldsymbol{c}(\rho(t,r)),
\]
with
\[
\boldsymbol{c}(\rho)=
\begin{dcases}
    g(\rho)\eta(\rho)\rho & \text{if } \rho>0,\\
    0 & \text{if } \rho=0.
\end{dcases}
\]
Thus, for \(r\) and \(t\) sufficiently small,
\[
\int_r^{\rho(t,r)} \frac{\dd u}{u g(u)} = t.
\]
In other words,
\[
G(|\boldsymbol{X}(t,0,x)-\boldsymbol{X}(t,0,0)|)=G(|x - 0|)+t
\]
for \(|x|\) sufficiently small and \(t>0\) sufficiently small (both depending on $\delta$).

Therefore, in order to establish the desired optimality statement, it remains to check that \(\boldsymbol{b}\) satisfies the DiPerna--Lions assumptions \eqref{regularity}, \eqref{growth}, and \eqref{compressibility}, that \(\Phi(D\boldsymbol{b})\in L^1_{\loc}(\mathbb R^d)\), and that
\[
\operatorname{M}^g_\infty(D\boldsymbol{b})\in L^\infty(\mathbb R^d).
\]
Once this is done, the conclusion follows from the definition of the sets \(\Omega_{\epsilon,R}\) and the Chebyschev inequality used in \Cref{lusin}.

To fix notation, we shall assume that
\[
\eta(\rho)=\phi\bigg(\frac{\rho}{\delta}\bigg),
\]
where \(\delta>0\), \(\phi\in C_c^\infty(\mathbb R)\) satisfies \(\phi(s)=1\) for \(s<1/4\), \(\phi(s)=0\) for \(s>1\), and
\[
|s\phi'(s)|\leq \frac32
\qquad\text{for all } s.
\]
The choice of \(\delta>0\) is not very material, but it will be discussed throughout the proof. Furthermore, for \(x\neq0\), let us write
\[
r=|x|,
\qquad
e_r=\frac{x}{|x|},
\]
and \(\partial_r\) for the radial derivative.

\textit{Step 1: verification of \eqref{regularity}.}
Since \(\boldsymbol{b}\in C_c(B_\delta)\), it is immediate that
\(
\boldsymbol{b}\in L^1(\mathbb R^d)\cap L^\infty(\mathbb R^d).
\)
Moreover, for \(x\neq0\),
\begin{equation}
    D \boldsymbol{b}(x) = \boldsymbol{c}'(r)\, e_r \otimes e_r + \frac{\boldsymbol{c}(r)}{r}\bigl(\mathbb I_d - e_r\otimes e_r\bigr),
    \label{Dc}
\end{equation}
where \(\mathbb I_d\) denotes the identity on \(\mathbb R^d\). Hence \(\boldsymbol{b}\in W^{1,1}(\mathbb R^d)\) as soon as one shows that
\[
\int_0^\delta |\boldsymbol{c}'(r)|\,r^{d-1}\,\dd r<\infty
\qquad\text{and}\qquad
\int_0^\delta \frac{|\boldsymbol{c}(r)|}{r}\,r^{d-1}\,\dd r<\infty.
\]

The second integral is simpler:
\[
\int_0^\delta \frac{|\boldsymbol{c}(r)|}{r}\,r^{d-1}\,\dd r
=
\int_0^\delta g(r)\eta(r)\,r^{d-1}\,\dd r,
\]
and this is finite because \(g\) is slowly varying at the origin and therefore, by Potter's bounds,
\[
g(r)=o(r^{-\epsilon})
\qquad\text{as } r\to0_+
\]
for every \(\epsilon\in(0,1)\).

For the first integral, write
\[
\boldsymbol{c}(r)=g(r)\eta(r)\,r.
\]
By Leibniz' rule, the only term requiring separate attention is
\[
\int_0^\delta |g'(r)|\,r\,\eta(r)\,r^{d-1}\,\dd r.
\]
Here we use in an essential way that \(g\) is normalized slowly varying at the origin: by \Cref{normalized0} and \Cref{equivalence},
\begin{equation}
    \lim_{r\to0_+}\frac{r g'(r)}{g(r)}=0.
    \label{normalizedlimit}
\end{equation}
Therefore,
\[
\int_0^\delta |g'(r)|\,r\,\eta(r)\,r^{d-1}\,\dd r
\leq
C_{g,\delta}\int_0^\delta g(r)\eta(r)\,r^{d-1}\,\dd r,
\]
and the right-hand side is finite by the previous argument. This proves that
\(
D\boldsymbol{b}\in L^1(\mathbb R^d).
\)

A closer inspection of the preceding estimates shows that
\begin{equation}
    |D\boldsymbol{b}(x)|=O(g(|x|))
    \qquad\text{as } x\to0.
    \label{Og}
\end{equation}

\textit{Step 2: verification of \eqref{growth}.}
The growth assumption \eqref{growth} is immediate, since \(\boldsymbol{b}\in L^\infty(\mathbb R^d)\).

\textit{Step 3: verification of \eqref{compressibility}.}
Recall that \(d\geq2\). Since \(\boldsymbol{b}(x)=\boldsymbol{c}(r)e_r\), one has
\begin{align}
    \Div \boldsymbol{b}(x) &=
\frac{1}{r^{d-1}}\frac{\partial}{\partial r}\bigl(r^{d-1}\boldsymbol{c}(r)\bigr) \nonumber\\ &= g(r)\bigg(\frac{r g'(r)}{g(r)}+1\bigg)\eta(r)
+
g(r)\big((d-1)\eta(r)+r\eta'(r)\big). \label{divergence}
\end{align}
Using again \eqref{normalizedlimit}, one sees that
\[
\Div \boldsymbol{b}_-\in C_c(\mathbb R^d)\subset L^\infty(\mathbb R^d).
\]

If one moreover wishes to ensure that \(\Div \boldsymbol{b}\geq0\) in \(\mathcal D'\), this can be achieved by choosing \(\delta>0\) sufficiently small (in spite of us eventually taking $\delta>0$ large). Indeed, let
$$
\begin{dcases}
    \operatorname{(I)} = g(r)\bigg(\frac{r g'(r)}{g(r)}+1\bigg)\eta(r) \text{ and} \\
    \operatorname{(II)} = g(r)\big((d-1)\eta(r)+r\eta'(r)\big),
\end{dcases}$$
so that \eqref{divergence} asserts that $\Div \boldsymbol{b}(x) = \operatorname{(I)} + \operatorname{(II)}.$ If \(\delta\) is sufficiently small, then \(\operatorname{(I)}\geq0\) by \eqref{normalizedlimit}. On the other hand, since \(|r\eta'(r)|\leq \frac32\eta(r)\), it follows that \(\operatorname{(II)}\geq0\) as well.

\textit{Step 4: verification that \(\Phi(D\boldsymbol{b})\in L^1(\mathbb R^d)\).}
Notice that \(\Psi\) is increasing and slowly varying at \(\infty\), as follows from L'Hôpital's rule. In view of \eqref{Dc}, the assertion \(\Phi(D\boldsymbol{b})\in L^1(\mathbb R^d)\) reduces to checking that
\begin{align}
    \operatorname{(I)} &= \int_0^\delta |\boldsymbol{c}'(r)|\,\Psi(|\boldsymbol{c}'(r)|)\,r^{d-1}\,\dd r < \infty,
    \label{I}\\
    \operatorname{(II)} &= \int_0^\delta \frac{|\boldsymbol{c}(r)|}{r}\,\Psi\bigg(\frac{|\boldsymbol{c}(r)|}{r}\bigg)\,r^{d-1}\,\dd r < \infty.
    \label{II}
\end{align}

We begin with \eqref{II}. The key observation is that if \(\ell_1\in \operatorname{RV}_0(\infty)\) and \(\ell_2\in \operatorname{RV}_0(0)\) are positive and \(\ell_2(0+)\) exists, then \(\ell_1\circ \ell_2\in \operatorname{RV}_0(0)\). This is immediate from the definition when \(\ell_1(0+)<\infty\), and otherwise follows from the uniform convergence theorem; see \cite[Theorem 1.5.2]{BGT}. Consequently,
\[
\frac{\boldsymbol{c}(r)}{r}\,\Psi\bigg(\frac{\boldsymbol{c}(r)}{r}\bigg)
\]
is slowly varying at \(0\), and the finiteness of \eqref{II} follows from Potter's bound.

By the monotonicity and slow variation of \(\Psi\), the same strategy applies to \eqref{I}. Indeed, using Leibniz' rule, one obtains
\[
\operatorname{(I)}
\leq
C_{\eta,\Psi}\bigg(
1
+
\int_0^\delta r|g'(r)|\,\Psi(r|g'(r)|)\,r^{d-1}\,\dd r
+
\int_0^\delta g(r)\Psi(g(r))\,r^{d-1}\,\dd r
\bigg).
\]
The last integral is finite by the previous argument, since \(g(r)\Psi(g(r))\) is slowly varying at \(0\). On the other hand, \eqref{normalizedlimit} gives
\[
\int_0^\delta r|g'(r)|\,\Psi(r|g'(r)|)\,r^{d-1}\,\dd r
\leq
C\int_0^\delta g(r)\Psi(Cg(r))\,r^{d-1}\,\dd r,
\]
which is of the same type. This proves \eqref{I}.

\textit{Step 5: analysis of \(M^\infty_g\).}
We now show that
\[
\operatorname{M}^g_\infty(D\boldsymbol{b})\in L^\infty(\mathbb R^d).
\]
Since \(\inf_{r>0} g(r)>0\) and \(D\boldsymbol{b}\in L^1(\mathbb R^d)\), it suffices to prove that
\begin{equation}
    \operatorname M^g_{R_0}(D\boldsymbol{b})\in L^\infty(\mathbb R^d),
    \label{goalM}
\end{equation}
where \(R_0>0\) is chosen so that
\begin{itemize}
    \item \(g\) is nonincreasing on \((0,3R_0)\); and
    \item \(|D\boldsymbol{b}(x)|\leq C_g g(|x|)\) for \(|x|<3R_0\), in view of \eqref{Og}.
\end{itemize}

We first prove that
\begin{equation}
    \operatorname M^g_{3R_0}(D\boldsymbol{b})(0)<\infty.
    \label{goal2}
\end{equation}
Indeed, for \(0<r<R_0\),
\begin{align*}
    \frac{1}{\mathcal L^d(B_r) g(r)} \int_{B_r} |D \boldsymbol{b}(x)|\,\dd x
    &\leq
    \frac{1}{r^d g(r)} \int_0^r |g'(s)| s^{d}\,\dd s
    +
    \frac{d}{r^d g(r)} \int_0^r g(s) s^{d-1}\,\dd s \\
    &= \operatorname{(I)}+\operatorname{(II)}.
\end{align*}
By \eqref{normalizedlimit}, the estimate of \(\operatorname{(I)}\) reduces to that of \(\operatorname{(II)}\). For \(\operatorname{(II)}\), Karamata's theorem (see, e.g., \cite[Proposition 1.5.10]{BGT}) gives
\[
\lim_{r\to0}\dfrac{\int_0^r s^{d-1}g(s)\,\dd s}{r^d g(r)}=\frac{1}{d}.
\]
This proves \eqref{goal2}.

Next we claim that \eqref{goal2} implies
\begin{equation}
    \operatorname M^g_{R_0}(D\boldsymbol{b})(x)\ \text{is bounded for } |x|<2R_0.
    \label{goal3}
\end{equation}
Indeed, fix \(|x|<2R_0\). If \(|x|/2\leq r<R_0\), then
\begin{align}
    \frac{1}{\mathcal L^d(B_r(x)) g(r)}&\int_{B_r(x)} |D\boldsymbol{b}(y)|\,\dd y \nonumber\\
    &\leq
    \frac{\mathcal L^d(B_{r+|x|})g(r+|x|)}{\mathcal L^d(B_r(x)) g(r)}
    \frac{1}{\mathcal L^d(B_{r+|x|})|g(r+|x|)}
    \int_{B_{r+|x|}(0)} |D\boldsymbol{b}(y)|\,\dd y \nonumber\\
    &\leq
    \frac{\mathcal L^d(B_{r+|x|})g(r+|x|)}{\mathcal L^d(B_r(x)) g(r)}\, M^{3R_0}_g(D\boldsymbol{b})(0) \nonumber\\
    &\leq
    C\, M^{3R_0}_g(D\boldsymbol{b})(0).
    \label{goal31}
\end{align}
On the other hand, if \(r<|x|/2<R_0\), then
\begin{align}
    \frac{1}{\mathcal L^d(B_r(x)) g(r)}\int_{B_r(x)} |D\boldsymbol{b}(y)|\,\dd y
    &\leq
    \frac{C_g}{\mathcal L^d(B_r(x)) g(r)} \int_{B_r(x)} g(|y|)\,\dd y \nonumber\\
    &\leq
    \frac{C_g}{\mathcal L^d(B_r(x)) g(r)} \int_{B_r(x)} g(|x|-r)\,\dd y \nonumber\\
    &\leq
    C_g \frac{g(|x|/2)}{g(r)} \leq C_g.
    \label{goal32}
\end{align}
Together, \eqref{goal31} and \eqref{goal32} yield \eqref{goal3}.

Finally, \(\operatorname M^g_{R_0}(D\boldsymbol{b})(x)\) is bounded on the annulus \(\{x\in\mathbb R^d:\ |x|>2R_0\}\), since \(D\boldsymbol{b}\) is bounded on \(\{x\in\mathbb R^d:\ |x|>R_0\}\), which is precisely the region probed by \(\operatorname M^g_{R_0}(D\boldsymbol{b})(x)\) when \(|x|>2R_0\).

We conclude that \eqref{goalM} holds, and consequently that \(M_\infty^g(D\boldsymbol{b})\) is bounded everywhere.

\textit{Step 6: conclusion.}
We may now conclude. We have shown that \(\boldsymbol{b}\) is a DiPerna--Lions vector field for which there exists a superlinear function \(\Phi\) satisfying \Cref{regularityforPhi}. Moreover, for \(\epsilon=\epsilon(R,\delta)\) sufficiently small, the good sets \(\Omega_{\epsilon,R}\) coincide with \(B_R\). Since \(\boldsymbol{b}\) is autonomous, radial, and sufficiently regular, the corresponding flow \(\boldsymbol{X}(t,0,x)\) may be computed explicitly, and for suitable \(x,y\) and \(t>0\) one has
\[
G(|\boldsymbol{X}(t,0,x)-\boldsymbol{X}(t,0,y)|)=G(|x-y|)+t.
\]
By choosing \(\delta\) sufficiently large, one may moreover arrange that \(t>0\) be arbitrarily large. The proof is complete. \end{proof}

\subsection{An example of a regular Lagrangian flow that does not obey \texorpdfstring{\eqref{estimate}}{}} 

We will now show that the classical Lipschitz-type estimate \eqref{estimate}, comparing \(\boldsymbol X(t,\cdot)\) with \(\boldsymbol X(s,\cdot)\), may fail in the general case where \(\boldsymbol b\) merely satisfies \eqref{regularity}, \eqref{growth}, and \eqref{compressibility}.

\begin{proposition} \label{counter} There exists an autonomous vector field \(\boldsymbol b \in W^{1,1}_{\loc}(\mathbb R)\) such that \begin{itemize} \item \(D\boldsymbol b \in L^1(\mathbb R)\), \item \(D\boldsymbol b \geq 0\) almost everywhere, and \item \(\boldsymbol b \in L^\infty(\mathbb R)\), \end{itemize} with the following property: for every \(T>0\) and every bounded open interval \(I\subset \mathbb R\) containing \(0\), there does not exist a function \( k \in L^1((0,T)\times I) \) such that \eqref{estimate} holds for all \(x,y\in I\) and all \(0\leq s<t\leq T\), where \(\boldsymbol X\) denotes the regular Lagrangian flow associated with \eqref{flow}.\end{proposition}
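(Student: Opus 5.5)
We will build a one-dimensional counterexample in which the trajectory starting at the origin crosses, in finite time, a singularity of $D\boldsymbol b$ that is integrable but not in $L\log L$. The choice is an increasing bounded field with $\boldsymbol b(x)\geq c>0$ near the origin, so that every trajectory starting in $I$ moves to the right at speed bounded below. Concretely, we take
\[
\boldsymbol b(x)=1+\int_{-\infty}^x h(z)\,\dd z,
\]
cut off suitably far away so that $\boldsymbol b$ stays bounded, where $h\geq0$, $h\in L^1(\mathbb R)$, and $h\notin L\log L$ near $0$. A natural choice is $h(z)=\frac{1}{|z|\log^2|z|}$ for small $|z|$ (or a sum of such bumps at points $x_n\to0$ if we need the failure at every scale). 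Then $D\boldsymbol b=h\geq0$ is integrable, and $\boldsymbol b$ is continuous, increasing and bounded, so \eqref{regularity}, \eqref{growth} and \eqref{compressibility} hold. Since $\boldsymbol b\geq1$, the flow is a classical $C^1$ diffeomorphism in $x$ for each fixed $t$ (the field is continuous and positive, so time-reparametrization gives uniqueness), and it coincides with the regular Lagrangian flow.

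The key observation is the explicit one-dimensional formula: for an autonomous positive field, $\partial_x\boldsymbol X(t,0,x)=\boldsymbol b(\boldsymbol X(t,0,x))/\boldsymbol b(x)$. Hence
\[
|\boldsymbol X(t,0,x)-\boldsymbol X(t,0,y)|\approx \frac{\boldsymbol b(\boldsymbol X(t,0,x))}{\boldsymbol b(x)}|x-y|.
\]
This alone is bounded, since $\boldsymbol b$ is bounded above and below, so the failure of \eqref{estimate} must come from the integrability of $k$ rather than from its finiteness. Taking $y\to x$ and using \eqref{estimate} with $s<t$, for a.e. $x$ we get
\[
\log\frac{\boldsymbol b(\boldsymbol X(t,0,x))}{\boldsymbol b(\boldsymbol X(s,0,x))}\leq 2\int_s^t k(\tau,x)\,\dd\tau .
\]
Dividing by $t-s\to0$ yields $h(\boldsymbol X(\tau,0,x))\leq 2k(\tau,x)$ for a.e. $(\tau,x)$, because $\frac{\dd}{\dd\tau}\log\boldsymbol b(\boldsymbol X)=h(\boldsymbol X)$. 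Care is needed at this step: \eqref{estimate} is stated for a.e. pair, so we would apply it for pairs $(x,y)$ with $y$ in a full-measure set and use the continuity of $\boldsymbol X$ to pass to the limit $y\to x$ together with Lebesgue differentiation in $\tau$.

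It remains to show that $\int_0^T\int_I h(\boldsymbol X(\tau,0,x))\,\dd x\,\dd\tau=\infty$. This is where the choice of $h$ enters, and it is the main obstacle, because the naive change of variables seems to give integrability. Indeed, $x\mapsto\boldsymbol X(\tau,0,x)$ has Jacobian comparable to $1$, so $\int_I h(\boldsymbol X(\tau,0,x))\,\dd x\lesssim\|h\|_{L^1}$, which is finite. So the construction must be modified. The first thing I would try is to make $k$ bounded below by a \emph{maximal-type} quantity rather than pointwise $h$: use the finite-difference form
\[
\log\frac{|\boldsymbol X(t,0,x)-\boldsymbol X(t,0,y)|}{|\boldsymbol X(s,0,x)-\boldsymbol X(s,0,y)|}=\log\frac{\frac{1}{\Delta_t}\int_{\Delta_t}\boldsymbol b\,\cdot}{\cdots},
\]
with $y$ far from $x$ (but still in $I$), so that the left side involves averages of $h$ over intervals of length comparable to $|x-y|$. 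Then, choosing $y$ adversarially for each $x$ (one fixed $y$ on the far side of the singularity, with $s$ just before $x$'s trajectory crosses the singular point $0$), the inequality forces $k(\cdot,x)+k(\cdot,y)$ to dominate the local Hardy--Littlewood maximal function of $h$ along the trajectory. By the Stein converse to Wiener's theorem, $\operatorname M h\notin L^1_{\loc}$ since $h\notin L\log L$, and a Fubini argument in $(\tau,x)$ would then contradict $k\in L^1((0,T)\times I)$. I expect the bookkeeping of this maximal lower bound, ensuring that the contribution attributed to $k(\cdot,y)$ cannot absorb the divergence (for instance by averaging over many $y$), to be the delicate part.
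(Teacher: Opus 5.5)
Your construction is essentially the paper's: the paper takes $h(x)=1/(x\log^2x)$ on $(0,e^{-2})$ only, and $\boldsymbol b=1+\int_0^x h$. Your second idea is also exactly the identity the paper exploits: for $x<y$, $\frac{\dd}{\dd t}\log\bigl(\boldsymbol X_t(y)-\boldsymbol X_t(x)\bigr)$ is the average of $h$ over $[\boldsymbol X_t(x),\boldsymbol X_t(y)]$.

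The gap is the final contradiction. You leave it open, and the route you suggest does not work as stated. After changing variables at a fixed time, all you have is a two-point bound
\[
\frac{1}{\eta-\zeta}\int_\zeta^\eta h\,\dd\xi\leq q(\zeta)+q(\eta),\qquad q\coloneqq k(t,\cdot)\circ\boldsymbol X_t^{-1}\in L^1 .
\]
This does not yield a pointwise bound $q\gtrsim\operatorname M h$, so Stein's converse to Wiener cannot be applied.
\begin{itemize}
\item With one fixed $y$ the argument fails outright: the slice $k(\cdot,y)$ is uncontrolled and can absorb everything.
\item Averaging over $\eta\in[\zeta+r,\zeta+2r]$ only gives $\sup_r\frac1r\int_\zeta^{\zeta+r}h\lesssim q(\zeta)+\operatorname M q(\zeta)$. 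Here $\operatorname M q$ is merely weak-$L^1$, so no contradiction follows.
\end{itemize}
Passing from such a two-point $L^1$ bound to $\operatorname M h\in L^1_{\loc}$ is a nontrivial Hardy-space-type fact, not a Fubini argument. Likewise, the divergence does not come from integrating in time along trajectories; it already appears on a single time slice.

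The missing idea is to use the explicit singularity directly. Fix one $t\in(0,T)$ with three properties: it is small enough that $0\in\operatorname{int}\boldsymbol X_t(I)$ (possible since $\boldsymbol b$ is bounded), $k(t,\cdot)\in L^1(I)$, and the differentiated inequality holds for a.e. pair. Since $\boldsymbol X_t$ is bi-Lipschitz, $q\in L^1$ near $0$.

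Now test the two-point bound only on pairs straddling the singularity: $\zeta\in A_n=(-e^{-n},-\frac12 e^{-n})$ and $\eta\in B_n=(\frac12 e^{-n},e^{-n})$. On all of $A_n\times B_n$ one has $\eta-\zeta\leq 2e^{-n}$ and $\int_\zeta^\eta h\geq\int_0^{e^{-n}/2}h=\frac{1}{n+\log 2}$ for large $n$. Hence $q(\zeta)+q(\eta)\gtrsim e^n/n$ there.

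Integrate this over the product set; doing so settles which endpoint absorbs the blow-up. You get $\int_{A_n}q+\int_{B_n}q\gtrsim 1/n$. Summing over the pairwise disjoint $A_n,B_n$ gives a divergent harmonic series, contradicting $q\in L^1$. This is the paper's argument, and it needs no maximal-function machinery.
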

\begin{proof}
The construction is quite simple. Define 
\[ f(x)\coloneqq  \begin{dcases} \dfrac{1}{x\log(x)^2} & \text{if } x\in(0,e^{-2}),\\[1ex] 0 & \text{if } x\notin(0,e^{-2}), \end{dcases} \] 
and set 
\begin{equation} 
\boldsymbol b(x)\coloneqq 1+\int_0^x f(\xi)\,\dd\xi. \label{defb} \end{equation}
Since \(f\in L^1(\mathbb R)\), \(f\geq 0\), and \( \int_{\mathbb R} f(\xi)\,\dd\xi = \frac12, \) it follows that \(\boldsymbol b\in W^{1,1}_{\loc}(\mathbb R)\), \(\boldsymbol b'=f\in L^1(\mathbb R)\), \(\boldsymbol b'\geq 0\) almost everywhere, and \begin{equation} 1\leq \boldsymbol b(x)\leq \frac32 \qquad \text{for all } x\in\mathbb R. \label{Linftybound} \end{equation}
    
In particular, the regular Lagrangian flow \(\boldsymbol X\) is globally well defined. In fact, by separation of variables, \begin{equation} \boldsymbol X(t,0,x)=H^{-1}(H(x)+t), \label{separation} \end{equation} where \[ H(x)=\int_0^x \frac{1}{\boldsymbol b(\xi)}\,\dd\xi. \] By \eqref{Linftybound}, \(H\) is well defined, strictly increasing, and satisfies \[ \frac23 \leq H'(x)\leq 1 \qquad \text{for all } x\in\mathbb R. \] 
Hence, \(H\) is invertible. Differentiating the identity \(H(\boldsymbol X(t,0,x))=H(x)+t \) with respect to \(x\), we obtain \[ \partial_x \boldsymbol X(t,0,x) = \frac{H'(x)}{H'(\boldsymbol X(t,0,x))} = \frac{\boldsymbol b(\boldsymbol X(t,0,x))}{\boldsymbol b(x)}. \] Therefore, \begin{equation} \frac23 \leq \partial_x \boldsymbol X(t,0,x) \leq \frac32 \qquad \text{for all } t\geq 0 \text{ and } x\in\mathbb R. \label{lipschitzX} \end{equation} In particular, for every \(t>0\), the map \[ x\mapsto \boldsymbol X_t(x)\coloneqq \boldsymbol X(t,0,x) \] is an increasing global diffeomorphism with its derivative bounded by below and above.
    
We claim that \eqref{estimate} cannot hold for any \(k\in L^1((0,T)\times I)\), where \(I\subset \mathbb R\) is any bounded open interval containing \(0\). Without loss of generality, we may assume \( I=(-\delta,\delta) \) for some \(\delta>0\). Suppose, by contradiction, that such a function \(k\) exists.

If \(x<y\) in \(I\), then \(\boldsymbol X_t(x)<\boldsymbol X_t(y)\) for all \(t>0\) by \eqref{lipschitzX}. Hence \eqref{estimate} implies \[ \log\bigl(\boldsymbol X_t(y)-\boldsymbol X_t(x)\bigr) - \log\bigl(\boldsymbol X_s(y)-\boldsymbol X_s(x)\bigr) \leq \int_s^t \bigl(k(\tau,x)+k(\tau,y)\bigr)\,\dd\tau \qquad (0\leq s<t\leq T). \]
By the Lebesgue differentiation theorem for vector-valued functions, \begin{equation} \frac{\int_{\boldsymbol{X}_t(x)}^{\boldsymbol{X}_t(y)} f(\xi)\,\dd \xi}{\boldsymbol{X}_t(y) -\boldsymbol{X}_t(x)} = \frac{\dd}{\dd t}\log\big(\boldsymbol{X}_t(y) -\boldsymbol{X}_t(x)\big) \leq k(t,x) + k(t,y) \quad \text{ in $\mathcal D'(I\times I)$  for }t \in W, \nonumber \end{equation} 
where $W \subset (0,T)$ has full measure.
    
Accordingly, there exist \(t\in(0,\delta/2)\) and $E_t \subset I$ such that $E_t$ has full measure, the ``trace'' \(k_t(x)\coloneqq  k(t,x)\) belongs to \(L^1(I)\), and it holds that
\begin{equation} \frac{1}{\boldsymbol{X}_t(y) -\boldsymbol{X}_t(x)}\int_{\boldsymbol{X}_t(x)}^{\boldsymbol{X}_t(y)} f(\xi)\,\dd \xi = \frac{\dd}{\dd t}\log\big(\boldsymbol{X}_t(y) -\boldsymbol{X}_t(x)\big) \leq k_t(x) + k_t(y) \quad \text{ in $\forall x<y \in E_t$.} \label{diff}
\end{equation}
Since \(\boldsymbol X_t^{-1}\) is Lipschitz by \eqref{lipschitzX}, the set \( \Omega\coloneqq \boldsymbol X_t^{-1}(E_t) \) has full measure in the interval \( J\coloneqq \boldsymbol X_t^{-1}(I), \) and the function \( \eta \mapsto q(\eta)\coloneqq k_t(\boldsymbol X_t^{-1}(\eta)) \) belongs to \(L^1(J)\). Moreover, due to the specific form of $\boldsymbol{b}$ \eqref{defb} and the fact that $0<t<\delta/2$, we have that \begin{equation} 0 \in \operatorname{int}(J). \label{0inJ} \end{equation}

Rewriting \eqref{diff} in terms of the variables \( \zeta=\boldsymbol X_t(x)\) and \( \eta=\boldsymbol X_t(y), \) we obtain 
\begin{equation} \frac{1}{\eta-\zeta}\int_\zeta^\eta f(\xi)\,\dd\xi \leq q(\zeta)+q(\eta) \label{diff2} \end{equation} 
for  every \(\zeta<\eta\) in \(\Omega\).

We now show that \eqref{diff2} is impossible. By \eqref{0inJ}, there exists \(N\geq 1\) such that the intervals \[ A_n\coloneqq \left(-e^{-n},-\frac12 e^{-n}\right), \qquad B_n\coloneqq \left(\frac12 e^{-n},e^{-n}\right) \] are contained in \(J\) for all \(n\geq N\). If \(\zeta\in A_n\cap \Omega\) and \(\eta\in B_n\cap \Omega\), then \begin{equation} \eta-\zeta\leq 2e^{-n}, \label{dist} \end{equation} while \begin{align} \int_\zeta^\eta f(\xi)\,\dd\xi &\geq \int_{0}^{\frac{1}{2}e^{-n}} \frac{1}{\xi\log(\xi)^2}\,\dd\xi = \frac{1}{n+\log 2}\label{integral} \end{align}
for all \(n\geq N\). Combining \eqref{dist}, \eqref{integral}, and \eqref{diff2}, we infer that \[ q(\zeta)+q(\eta)\geq \frac{c_1 e^n}{n} \qquad \text{for almost every } (\zeta,\eta)\in A_n\times B_n, \] for some constant \(c_1>0\). Integrating over \(A_n\times B_n\), we obtain \[ \mathcal L^1(A_n)\int_{A_n} q(\zeta)\,\dd\zeta + \mathcal L^1(B_n)\int_{B_n} q(\eta)\,\dd\eta \geq \frac{c_1 e^n}{n}\mathcal L^1(A_n)\mathcal L^1(B_n). \] Since \[ \mathcal L^1(A_n)=\mathcal L^1(B_n)=\frac12 e^{-n}, \] it follows that there exists \(c_2>0\) such that \[ \int_{A_n} q(\zeta)\,\dd\zeta+\int_{B_n} q(\eta)\,\dd\eta \geq \frac{c_2}{n} \qquad \text{for all } n\geq N. \] As the family \(\{A_n,B_n\}_{n\geq N}\) is pairwise disjoint, we conclude that \[ \int_J q(\xi)\,\dd\xi \geq \sum_{n=N}^\infty \left( \int_{A_n} q(\zeta)\,\dd\zeta + \int_{B_n} q(\eta)\,\dd\eta \right) \geq c_2\sum_{n=N}^\infty \frac{1}{n} = \infty, \] which contradicts the fact that \(q\in L^1(J)\).

Therefore, \eqref{estimate} cannot hold, and the proof is complete.\end{proof}

\subsection{An example of a regular Lagrangian flow for a \texorpdfstring{$\bv$}{} vector field that does not obey \texorpdfstring{\eqref{estimatenew}}{}} 

In our final example, we will provide a $\bv$ vector field whose regular Lagrangian flow cannot satisfy \eqref{estimatenew}. Thus, one can argue that \eqref{estimatenew} is particular and sharp to the DiPerna--Lions class $L^1((0,T); W_{\loc}^{1,1}(\mathbb R^d))$.

\begin{proposition}\label{counter2}
There exists an autonomous vector field \(\boldsymbol b\in \bv_{\loc}(\mathbb R)\) such that
\begin{itemize}
    \item \(D\boldsymbol b\in \mathcal M(\mathbb R)\),
    \item \(D\boldsymbol b\geq 0\) in \(\mathcal D'(\mathbb R)\), and
    \item \(\boldsymbol b\in L^\infty(\mathbb R)\),
\end{itemize}
with the following property. Let \(T>0\) and let \(I\subset\mathbb R\) be a bounded open interval containing \(0\). Then there do not exist
\begin{itemize}
    \item a continuous function \(g:\mathbb R_+\to\mathbb R_+\), nonincreasing near \(0\), and such that the Osgood condition
    \[
    \int_0^\epsilon \frac{\dd u}{u g(u)}=\infty
    \]
    is satisfied for some $\epsilon>0$, and
    \item a function \(k\in L^1((0,T)\times I)\),
\end{itemize}
for which the estimate \eqref{estimatenew}
holds for almost every \(x,y\in I\) and all \(0\leq s<t\leq T\), where
\[
G(r)=\int_a^r \frac{\dd u}{u g(u)}
\]
for some \(a>0\), and \(\boldsymbol X\) denotes the regular Lagrangian flow associated with \eqref{flow}.
\end{proposition}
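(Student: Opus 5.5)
Our plan is to take a jump vector field, which is the simplest possible BV vector field. Set
\[
\boldsymbol b(x)=1+\mathbbm 1_{(0,\infty)}(x),
\]
so that $1\le \boldsymbol b\le 2$ and $D\boldsymbol b=\delta_0\ge0$. Because $\boldsymbol b\ge1$, trajectories cross the origin in finite time and with positive speed. The regular Lagrangian flow can therefore be computed explicitly, as in the proof of \Cref{counter}: $\boldsymbol X(t,0,x)=H^{-1}(H(x)+t)$ with $H(x)=\int_0^x \dd\xi/\boldsymbol b(\xi)$. Concretely, for $x<0$ and $t$ small we have $\boldsymbol X_t(x)=x+t$ as long as $x+t\le0$, and $\boldsymbol X_t(x)=2(x+t)$ once $x+t>0$. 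For $y>0$ we have $\boldsymbol X_t(y)=y+2t$. The key quantity is the separation of a pair $x<0<y$ that straddle the point $-t$ (the preimage of the jump).

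Assume by contradiction that $g$, $G$ and $k$ exist as in the statement. We would derive a pointwise differential inequality, exactly as in the proof of \Cref{counter}. Take $\zeta<\eta$ at time $t$ with $\zeta<0<\eta$. Then
\[
\frac{\dd}{\dd t}(\eta-\zeta)=\boldsymbol b(\eta)-\boldsymbol b(\zeta)=1.
\]
Differentiating \eqref{estimatenew} in $t$, which is legitimate in the distributional sense for a.e.\ $t$, gives
\[
\frac{1}{(\eta-\zeta)\,g(\eta-\zeta)}\le q(\zeta)+q(\eta),
\]
where $q=k_t\circ\boldsymbol X_t^{-1}\in L^1(J)$ on an interval $J\ni0$. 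The map $\boldsymbol X_t^{-1}$ is bi-Lipschitz, since $\partial_x\boldsymbol X_t\in\{1/2,1,2\}$. This reduction uses the fact that $\boldsymbol X_t$ is a bi-Lipschitz homeomorphism with a.e.\ differentiability in $t$. We must also handle the fact that, for a fixed pair $(x,y)$, crossing the origin changes the derivative. This only happens on a null set of times, so we would fix a good time $t$ via Fubini and work on pairs $(\zeta,\eta)$ at that time. Dyadic pairs close to $0$ in the Eulerian variables come from such straddling pairs.

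Next we integrate over dyadic blocks, as in \Cref{counter}. Let $A_n=(-e^{-n},-\tfrac12e^{-n})$ and $B_n=(\tfrac12 e^{-n},e^{-n})$. For $\zeta\in A_n$ and $\eta\in B_n$ we have $\eta-\zeta\in(e^{-n},2e^{-n})$. Since $g$ is nonincreasing near $0$, this gives $q(\zeta)+q(\eta)\ge c\,e^{n}/g(e^{-n})$. Integrating over $A_n\times B_n$ and dividing by $|A_n|=|B_n|=\tfrac12 e^{-n}$ yields
\[
\int_{A_n\cup B_n}q\ \ge\ \frac{c}{g(e^{-n})}.
\]
Summing over the disjoint blocks, $\|q\|_{L^1(J)}\ge c\sum_{n\ge N}1/g(e^{-n})$. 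It remains to check that this series diverges. This is exactly the Osgood condition in logarithmic scale: substituting $u=e^{-v}$,
\[
\int_0^{\epsilon}\frac{\dd u}{u\,g(u)}=\int_{|\log\epsilon|}^\infty\frac{\dd v}{g(e^{-v})},
\]
and by monotonicity of $g$ near $0$ this integral is comparable to $\sum_n 1/g(e^{-n})$ (bound each unit interval $[n,n+1]$ by its endpoint value $1/g(e^{-n-1})$). Hence the series diverges, contradicting $q\in L^1(J)$.

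The main obstacle is the rigorous passage from \eqref{estimatenew}, which holds for almost every pair and all $s<t$, to the pointwise-in-time, a.e.-in-space inequality at a single good time $t$. The difficulty is that $G$ is only locally Lipschitz away from $0$, and the separation is piecewise affine in $t$. Our first attempt would be to divide \eqref{estimatenew} by $t-s$ and use Lebesgue points of $\tau\mapsto k(\tau,\cdot)$ in $L^1(I)$, tested against products $\varphi(x)\psi(y)$. Alternatively, we would avoid differentiation altogether. Integrating \eqref{estimatenew} directly over a short time window $[s,s+h]$ during which the straddling pairs do not cross $0$ gives $G(r+h)-G(r)\approx h/(r g(r))$, and we would then run the same block argument on $\int_s^{s+h}q$.
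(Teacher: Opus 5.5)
Your proposal is correct and is essentially the paper's proof. It uses the same jump field (the value of $\boldsymbol b$ at $0$ is immaterial) and the same reduction at a good time $t$ to $\frac{1}{(\eta-\zeta)g(\eta-\zeta)}\le q(\zeta)+q(\eta)$ for $\zeta<0<\eta$ with $q=k_t\circ\boldsymbol X_t^{-1}$. It then runs the same blocks $A_n,B_n$ and the same integral test against the Osgood condition.

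The step you flag as the main obstacle is settled by your first option alone: for a.e.\ $x$, a.e.\ $t$ is a Lebesgue point of $k(\cdot,x)$, and Fubini then yields a good $t$. This is what the paper's ``as before'' amounts to. One small correction in the integral test: the bound you actually need is $\int_n^{n+1}\dd v/g(e^{-v})\le 1/g(e^{-n})$, since $v\mapsto 1/g(e^{-v})$ is nonincreasing for large $v$. The endpoint $1/g(e^{-n-1})$ you cite gives a lower bound on that interval, not an upper bound.
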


\begin{proof}
The argument is a variation on the proof of \Cref{counter}, but the present example is even simpler. Consider
\begin{equation}
    \boldsymbol b(x)\coloneqq 1+1_{[0,\infty)}(x).
    \label{defb2}
\end{equation}
Then \(\boldsymbol b\in \bv_{\loc}(\mathbb R)\cap L^\infty(\mathbb R)\), and
\(
D\boldsymbol b=\delta_0,
\)
the Dirac mass at the origin. In particular, \(D\boldsymbol b\geq0\) in the sense of distributions.

The regular Lagrangian flow \(\boldsymbol X\) is globally well defined by the Ambrosio theory \cite{ambrosio}. In the present case, however, it can be written explicitly again. Indeed, since the velocity is equal to \(1\) on \((-\infty,0)\) and to \(2\) on \([0,\infty)\), one finds
\[
\boldsymbol X_t(x)\coloneqq \boldsymbol X(t,0,x)=
\begin{dcases}
x+t & \text{if } x<-t,\\[0.5ex]
2x+2t & \text{if } -t\leq x<0,\\[0.5ex]
x+2t & \text{if } x\geq 0.
\end{dcases}
\]
In particular, for every \(t>0\), the map \(\boldsymbol X_t\) is an increasing bi-Lipschitz homeomorphism of \(\mathbb R\).

We now show that \eqref{estimatenew} cannot hold. Let \(I\subset\mathbb R\) be a bounded open interval containing \(0\). Shrinking if necessary, we may assume
\(
I=(-\delta,\delta)
\)
for some \(\delta>0\). Suppose, by contradiction, that there exist \(g\) and \(k\) as in the statement such that \eqref{estimatenew} holds.

As before, there exist \(t\in(0,\delta/2)\) and $E_t \subset I$ such that $E_t$ has full measure, the ``trace'' \(k_t(x)\coloneqq k(t,x)\) belongs to \(L^1(I)\), and it holds that \begin{equation} \frac{\int_{\boldsymbol{X}_t(x)}^{\boldsymbol{X}_t(y)} \dd\boldsymbol{b}' (\xi)}{(\boldsymbol{X}_t(y) -\boldsymbol{X}_t(x))g(\boldsymbol{X}_t(y) -\boldsymbol{X}_t(x))} = \frac{\dd}{\dd t}G\big(\boldsymbol{X}_t(y) -\boldsymbol{X}_t(x)\big) \leq k_t(x) + k_t(y) \quad \text{ in $\forall x<y \in E_t$.} \nonumber \end{equation} We again define \( \Omega\coloneqq \boldsymbol X_t^{-1}(E_t) \), which has full measure in \( J\coloneqq \boldsymbol X_t^{-1}(I) \), and the function \( \eta \mapsto q(\eta)\coloneqq k_t(\boldsymbol X_t^{-1}(\eta)) \), which belongs to \(L^1(J)\). Moreover, due to the specific form of $\boldsymbol{b}$ \eqref{defb2} and the fact that $0<t<\delta/2$, we have that $0 \in \operatorname{int}(J).$

Thus, by a change of variables, deduce that \begin{equation} \frac{1}{(\eta-\zeta )g(\eta-\zeta)}\int_\zeta^\eta \boldsymbol{b}' (\dd\xi) \leq q(\zeta)+q(\eta) \nonumber \end{equation} for almost every \(\zeta<\eta\) in \(\Omega\). If we choose $\zeta<0<\eta$, we obtain \begin{equation} \frac{1}{(\eta-\zeta)g(\eta-\zeta)} \leq q(\zeta)+q(\eta) \text{ for $\zeta<0<\eta$ in $\Omega. $} \label{diff3}\end{equation}

We now derive a contradiction. Since \(0\in \operatorname{int}(J)\), there exists \(N\geq1\) such that
\[
A_n\coloneqq \left(-e^{-n},-\frac12 e^{-n}\right),
\qquad
B_n\coloneqq \left(\frac12 e^{-n},e^{-n}\right)
\]
are contained in \(J\) for all \(n\geq N\), and such that \(g\) is nonincreasing on \((0,\frac12 e^{-N})\). If \(\zeta\in A_n\cap\Omega\) and \(\eta\in B_n\cap\Omega\), then
\(
e^{-n}\leq \eta-\zeta\leq 2e^{-n},
\)
and therefore \eqref{diff3} gives
\[
q(\zeta)+q(\eta)\geq \frac{c_1e^n}{g(e^{-n})}
\qquad \text{for almost every } (\zeta,\eta)\in A_n\times B_n,
\]
for some constant \(c_1>0\). Integrating this over \(A_n\times B_n\), we obtain \[ \mathcal L^1(A_n)\int_{A_n} q(\zeta)\,\dd\zeta + \mathcal L^1(B_n)\int_{B_n} q(\eta)\,\dd\eta \geq \frac{c_1e^n}{g(e^{-n})}\mathcal L^1(A_n)\mathcal L^1(B_n). \] Since \( \mathcal L^1(A_n)=\mathcal L^1(B_n)=\frac12 e^{-n}, \) there exists some \(c_2>0\) such that \[ \int_{A_n} q(\zeta)\,\dd\zeta+\int_{B_n} q(\eta)\,\dd\eta \geq \frac{c_2}{g(e^{-n})} \qquad \text{for all } n\geq N. \]

Because the family \(\{A_n,B_n\}_{n\geq N}\) is pairwise disjoint, we conclude that
\[
\int_J q(\xi)\,\dd\xi
\geq
c_2\sum_{n=N}^\infty \frac{1}{g(e^{-n})}.
\]
Finally, since \(g\) is nonincreasing near \(0\), the integral test gives
\[
\sum_{n=N}^\infty \frac{1}{g(e^{-n})}
\sim
\int_N^\infty \frac{\dd t}{g(e^{-t})}
=
\int_0^{e^{-N}} \frac{\dd r}{r g(r)}
=
\infty,
\]
by the Osgood condition. This contradicts the fact that \(q\in L^1(J)\).

The contradiction proves that no such pair \((g,k)\) can exist. This completes the proof.
\end{proof}

\section{Regularity results for solutions to the transport equation}\label{SecReg} 

We now turn to the regularity of solutions \(u(t,x)\) to the transport equation \eqref{transport}. For simplicity, throughout this section we shall work under the stronger global assumption
\begin{equation} D\boldsymbol b \in L^1((0,T);L^1(\mathbb R^d)), \label{regularity2} \end{equation} 
in addition to \eqref{growth}, and the almost compressibility condition
\begin{equation} \Div \boldsymbol b \in L^1((0,T);L^\infty(\mathbb R^d)). \label{compressibility2} \end{equation} 
We also assume that the initial datum satisfies \begin{equation} 
u(0,x)=:u_0(x)\in (L^\infty\cap \bv)(\mathbb R^d). \label{cauchy} 
\end{equation} 

Under these hypotheses, the classical theory of DiPerna and Lions \cite{dipernalions} ensures the existence and uniqueness of a bounded solution \( u\in L^\infty((0,T)\times \mathbb R^d) \) to \eqref{transport} with initial datum \eqref{cauchy}. Moreover, one has the Lagrangian representation \[ u(t,\boldsymbol X(t,0,x))=u_0(x), \] where \(\boldsymbol X\) denotes the regular Lagrangian flow associated with \(\boldsymbol b\). Since \eqref{compressibility2} yields time reversibility of \eqref{flow}, one may also introduce the backward flow \(\boldsymbol Y(t,x) = \boldsymbol{X}(0,t,x)\). As a consequence, the solution admits the representation \begin{equation} u(t,x)=u_0(\boldsymbol Y(t,x)), \label{repr} \end{equation} 
whose regularity properties we shall now investigate. 

To motivate the result, let us first recall the picture under the hypothesis that 
\begin{equation} \boldsymbol b \text{ is divergence free and } \boldsymbol b \in L^1((0,T);W^{1,p}(\mathbb R^d)) \quad \text{for some } 1<p<\infty. \label{regularityLp} \end{equation}
Even under \eqref{regularityLp}, solutions to \eqref{transport} with initial datum \eqref{cauchy} may become highly irregular and may lose all fractional Sobolev regularity \(W^{s,q}\), even when \(0<q<1\); see \cite{jabin,mazzucato1,mazzucato2,bruenguyen}.  On the other hand, one may still preserve a logarithmic Sobolev regularity. More precisely, one is led to consider the seminorm 
\begin{equation} \|f\|_{\dot H^{\log}(\mathbb R^d)}^2 = \int_{\mathbb R^d}\int_{B_\delta} \frac{|f(x+h)-f(x)|^2}{|h|^d}\,\dd x\,\dd h, \label{hlog} \end{equation} for some \(\delta>0\). (The notation \(\dot H^{\log}\) can be explained by \Cref{mixingthm2} below.) An important result of Léger shows that, under \eqref{regularityLp}, logarithmic regularity of this type may still be propagated. Indeed, a corollary of the Léger theory is that
\[ \|u(t,\cdot)\|_{\dot H^{\log}(\mathbb R^d)}^2 \leq C_{p,d} \left( \|u_0\|_{(\dot H^{\log}\cap L^2)(\mathbb R^d)}^2 + \|u_0\|_{(L^1\cap L^\infty)(\mathbb R^d)}^2 \int_0^t \|D\boldsymbol b(\tau,\cdot)\|_{L^p(\mathbb R^d)}\,\dd\tau \right); \]
see also \cite{MS, huysmans0, bcpj, dpf} for recent developments under other approaches.

This phenomenon was later sharpened by Bruè and Nguyen \cite{bruenguyen}, who proved that under \eqref{regularityLp}, 
\begin{align} 
\int_{\mathbb R^d}\int_{B_{1/3}} &\frac{|u(t,x+h)-u(t,x)|^2}{|h|^d\log(1/|h|)^{1-p}}\,\dd x\,\dd h \nonumber\\ &\leq C_{p,d} \left( \left( \int_0^t \|D\boldsymbol b(\tau,\cdot)\|_{L^p(\mathbb R^d)}\,\dd\tau \right)^p + \|u_0\|_{\bv(\mathbb R^d)}^p + \|u_0\|_{L^1(\mathbb R^d)} \right). \label{BN} 
\end{align} 
They also used the mixing flow of Alberti, Crippa, and Mazzucato \cite{mazzucato0} to show the sharpness of this estimate: for every \(q\geq 1\), they constructed a compactly supported divergence-free vector field \( \boldsymbol b\in L^\infty((0,\infty);W^{1,q}(\mathbb R^d)) \) and a compactly supported datum \( u_0\in (L^\infty\cap W^{1,d})(\mathbb R^d) \) such that \[ \int_{\mathbb R^d}\int_{B_{1/3}} \frac{|u(t,x+h)-u(t,x)|^2}{|h|^d\log(1/|h|)^\gamma}\,\dd x\,\dd h = \infty \] for every \(t>0\) and every \(\gamma>1-q\). 

The weight appearing in \eqref{BN}, \( \log(1/r)^{1-p}, \) is slowly varying at the origin. Naturally, this weight degenerates when \(p=1\), and the endpoint case is not covered by \cite{bruenguyen}. However, this suggests replacing \(\log(1/r)^{1-p}\) by the \(g\)-weights introduced earlier, and asking whether one can still maintain a suitable ``logarithmic'' Sobolev regularity in the endpoint setting \eqref{regularity2} by means of the Osgood regularity. This is precisely the content of the next theorem. 

\begin{theorem}\label{bressan} 

Let \(\boldsymbol b\) satisfy \eqref{growth}, \eqref{regularity2}, and \eqref{compressibility2}. Let \( u\in L^\infty((0,T)\times\mathbb R^d) \) be the unique solution to the transport equation \eqref{transport} with initial datum \(u_0\) satisfying \eqref{cauchy}. Finally, consider functions $\Phi$ and $\Phi_0$ as in \Cref{regularityforPhi} with $\lambda = \infty$. 

Then, for every \(a>0\) and \(R>0\), there exists \(\delta_a>0\) sufficiently small such that \begin{align} \sup_{t\in(0,T)} \int_{B_{\delta_a}}&\int_{B_R} \frac{\min\{a^2,|u(t,x+h)-u(t,x)|^2\}}{|h|^dg(|h|)}\,\dd x\,\dd h \leq C_{d,R,\Phi,a} \exp\!\left( \|\Div \boldsymbol b\|_{L^1((0,T);L^\infty(\mathbb R^d))} \right) \nonumber \\ &\left( \int_0^T\|D\boldsymbol b(t,\cdot)\|_{L^1(\mathbb R^d)}\,\dd t + \int_0^T\|D\boldsymbol b(t,\cdot)\|_{L^{\Phi_0}(\mathbb R^d)}\,\dd t + \|u_0\|_{\bv(\mathbb R^d)} \right),  \label{gagliardo} \end{align} 
where \(g\) is defined by \eqref{definitiong} and \(G\) by \eqref{definitionG}. In fact, \(\delta_a\) may be chosen as any number in $(0,R)$ such that \(G(\delta_a)<-a. \) \end{theorem}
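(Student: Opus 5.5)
The plan is to adapt the Bru\`e--Nguyen argument, replacing the Lipschitz estimate \eqref{estimate} with the Osgood estimate \eqref{inequalitycrude}. Let \(\boldsymbol Y(t,\cdot)=\boldsymbol X(0,t,\cdot)\) be the backward flow. By \eqref{compressibility2}, \(\boldsymbol Y\) is the regular Lagrangian flow of \(-\boldsymbol b\) after reversing time, and its compressibility constant is \(L\le \exp(\|\Div\boldsymbol b\|_{L^1L^\infty})\). We work with \(\lambda=\infty\), which \eqref{regularity2} permits. Applying \Cref{quantitative} to this reversed field therefore gives, for a.e.\ \(x,z\) and all \(t\),
\[
G(|\boldsymbol Y(t,x)-\boldsymbol Y(t,z)|)\le G(|x-z|)+K(t,x)+K(t,z),
\]
where
\[
K(t,x)=\int_0^t k(\tau,\boldsymbol X(t\text{-backward flow}))\,\dd\tau,
\]
that is, \(k=C_{d,g}\operatorname M^g_\infty(D\boldsymbol b)\) is integrated along the backward trajectory through \(x\). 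Using the compressibility bound together with \Cref{constructiong2} (with \(U=B_R\), or globally since \(\lambda=\infty\)), we get
\[
\int_{B_{R+1}}K(t,x)\,\dd x\le L\int_0^T\|\operatorname M^g_\infty D\boldsymbol b(\tau)\|_{L^1}\,\dd\tau\lesssim L\bigl(\|D\boldsymbol b\|_{L^1L^1}+\|D\boldsymbol b\|_{L^1L^{\Phi_0}}\bigr).
\]

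Next we split the integrand at \(z=x+h\). Put \(u(t,x+h)-u(t,x)=u_0(\boldsymbol Y(t,x+h))-u_0(\boldsymbol Y(t,x))\), and set \(E_h=\{x\in B_R: K(t,x)+K(t,x+h)\ge -G(|h|)/2\}\). Since \(G(|h|)<-a\) for \(|h|<\delta_a\) and \(G(0+)=-\infty\), this threshold is large. On \(E_h\) we bound the integrand by \(a^2\). Chebyshev gives
\[
\mathcal L^d(E_h)\le \frac{4}{|G(|h|)|}\int_{B_{R+1}}K,
\]
so this part contributes at most
\[
a^2\int_{B_{\delta_a}}\frac{\dd h}{|h|^dg(|h|)\,|G(|h|)|}\int K.
\]
This is where the Osgood structure matters. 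In polar coordinates the \(h\)-integral becomes \(\int_0^{\delta_a}\frac{\dd r}{rg(r)|G(r)|}\), whose integrand is \(-\frac{\dd}{\dd r}\log|G(r)|\), and this diverges logarithmically. \emph{This is the main obstacle.} I would fix it by choosing a better threshold: use \(E_h=\{K(t,x)+K(t,x+h)\ge \theta(|h|)\}\) with \(\theta\) such that
\[
\int_0^{\delta}\frac{\dd r}{rg(r)\theta(r)}<\infty \quad\text{and}\quad G(r)+2\theta(r)\le G(r)/2 \text{ fails.}
\]
Since these are incompatible, I would instead avoid Chebyshev altogether. Write
\[
\min\{a^2,|\Delta u|^2\}\le a^2\,\mathbb 1_{\{|\boldsymbol Y(t,x+h)-\boldsymbol Y(t,x)|\ge \rho\}}+a\,\min\{a,|u_0(\boldsymbol Y(x+h))-u_0(\boldsymbol Y(x))|\}\mathbb 1_{\{|\Delta\boldsymbol Y|<\rho\}}.
\]
Then change variables \(s=-G(|h|)\), so that \(\dd h/(|h|^dg(|h|))\) becomes \(c_d\,\dd s\) on \((a,\infty)\). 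The first term is at most
\[
a^2c_d\int_a^\infty\mathcal L^d\{x: K(t,x)+K(t,x+h)\ge s+G(\rho)\}\,\dd s,
\]
which by the layer-cake formula is bounded by \(\int (K(x)+K(x+h))\) averaged over directions. This is linear in \(\|K\|_{L^1}\), as required by \eqref{gagliardo}. Choosing \(\rho\) with \(G(\rho)=0\), i.e.\ \(\rho=a_0\), makes the constant depend only on \(\Phi\) and \(a\).

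For the second term we use that \(|\boldsymbol Y(t,x+h)-\boldsymbol Y(t,x)|<\rho\) and \(u_0\in\bv\). Averaging over \(x\) and applying the change of variables \(w=\boldsymbol Y(t,x)\), whose Jacobian is controlled by \(L\), the standard estimate \(\int|u_0(w+v)-u_0(w)|\,\dd w\le |v|\,\|u_0\|_{\bv}\) gives a bound \(\rho\|u_0\|_{\bv}\) for each displacement. The \(h\)-integration is then harmless because it is again reduced via the layer-cake argument in \(s\): the measure of the set where the displacement exceeds \(G^{-1}(G(|h|)+m)\) decays in \(m\). This gives the \(\|u_0\|_{\bv}\) term. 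The delicate point throughout is making the \(\dd s\) integral of the superlevel measures converge linearly in \(\|K\|_1\). I expect this to be where the restriction to \(\min\{a^2,\cdot\}\) and the choice \(G(\delta_a)<-a\) are essential, and I would verify it first.
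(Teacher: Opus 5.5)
\paragraph{Verdict.} There is a genuine gap, in the BV (small-displacement) term.

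\paragraph{What is fine and where it fails.} Your flow estimate for $\boldsymbol Y$ is fine, and so is the $L^1$ bound on $K$. Your treatment of the large-displacement term also works: you use a layer cake in $\sigma=-G(|h|)$ instead of Chebyshev, and get a bound linear in $\|K(t,\cdot)\|_{L^1(B_{2R})}$.

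The problem is the second term. You cut at a fixed $\rho$ ($=a$). For each $h$, the BV input then gives
\[
\int_{B_R} a\min\{a,|u(t,x+h)-u(t,x)|\}\,\mathbbm{1}_{\{|\boldsymbol Y(t,x+h)-\boldsymbol Y(t,x)|<\rho\}}\,\dd x\lesssim a\rho L\,|Du_0|(\mathbb R^d),
\]
and this does not decay as $h\to0$. But
\[
\int_{B_{\delta_a}}\frac{\dd h}{|h|^dg(|h|)}=|\mathbb S^{d-1}|\int_0^{\delta_a}\frac{\dd r}{rg(r)}=\infty
\]
by the very Osgood condition. So your displayed decomposition diverges. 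The sentence about the $h$-integration being ``harmless'' by a layer cake in $m$ is exactly where the proof has to happen, and it is not supplied; you flag this yourself.

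The BV step is also not justified as written. After the substitution $w=\boldsymbol Y(t,x)$, the displacement $\boldsymbol Y(t,x+h)-\boldsymbol Y(t,x)$ depends on $w$. The translation estimate $\int|u_0(w+v)-u_0(w)|\,\dd w\le|v|\,|Du_0|(\mathbb R^d)$ therefore does not apply. What is true, and what you need, is the cube and relative-isoperimetric bound from the proof of \Cref{weakbound1}: on any set $A$ where $|\boldsymbol Y(t,x+h)-\boldsymbol Y(t,x)|\le\rho$,
\[
\int_A|u(t,x+h)-u(t,x)|\,\dd x\le C_dL\rho\,|Du_0|(\mathbb R^d).
\]

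\paragraph{The missing idea: the cut must move with $h$.} Your first split was the right one. The logarithmic divergence came from Chebyshev, not from the threshold.
\begin{enumerate}
\item Put $\sigma=-G(|h|)$, fix $\theta\in(0,1)$, and call $\{x\in B_R: K(t,x)+K(t,x+h)\ge\theta\sigma\}$ the bad set.
\item On the bad set, bound the integrand by $a^2$. Its measure is at most $2\mathcal L^d\{y\in B_{2R}:K(t,y)\ge\theta\sigma/2\}$. Integrating in $\sigma$ with your layer cake gives at most $4\theta^{-1}\|K(t,\cdot)\|_{L^1(B_{2R})}$.
\item On the complement, $|\boldsymbol Y(t,x+h)-\boldsymbol Y(t,x)|\le G^{-1}\bigl((1-\theta)G(|h|)\bigr)$. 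Since $\min\{a^2,s^2\}\le as$, the cube bound gives a contribution of at most $aC_dL\,G^{-1}\bigl((1-\theta)G(|h|)\bigr)\,|Du_0|(\mathbb R^d)$.
\item This is now integrable against the Osgood weight, because $g\ge\gamma>0$:
\[
\int_0^{\delta_a}G^{-1}\bigl((1-\theta)G(r)\bigr)\frac{\dd r}{rg(r)}\le\frac{1}{1-\theta}\int_0^\infty G^{-1}(-u)\,\dd u=\frac{1}{1-\theta}\int_0^a\frac{\dd s}{g(s)}\le\frac{a}{(1-\theta)\gamma}.
\]
\end{enumerate}
This closes the argument, with a bound linear in $\|K\|_{L^1}$ and in $L|Du_0|(\mathbb R^d)$, uniformly in $t$.

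\paragraph{Comparison with the paper.} The paper argues differently. It uses the pointwise Lusin inequality for $u_0$ to get
\[
|u(t,x+h)-u(t,x)|\le\bigl(v(x)+v(x+h)\bigr)\,G^{-1}\bigl(G(|h|)+w(x)+w(x+h)\bigr),\qquad v=\operatorname M(Du_0)\circ\boldsymbol Y\in\log L .
\]
It absorbs $v$ through $G(zy)-G(y)\le C\log z$. It then applies Fubini together with
\[
\int_0^{G^{-1}(-\lambda)}\partial_r\bigl([G^{-1}(G(r)+\lambda)]^2\bigr)\,\dd r\le a^2,
\]
as in \Cref{lemma217,keylemma}. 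The repaired version of your route avoids the $\log L$ control of $v$, at the cost of the cube argument.
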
 

\begin{remark} 
\textnormal{Some remarks are in order.}

\begin{itemize} 

    \item \textnormal{By \eqref{compressibility2} and \eqref{repr}, one has \[ \|u(t,\cdot)\|_{L^\infty(\mathbb R^d)}=\|u_0\|_{L^\infty(\mathbb R^d)}. \] Consequently, if one chooses \(a\geq \|u_0\|_{L^\infty(\mathbb R^d)}\) in \Cref{bressan}, then the truncation in \eqref{gagliardo} becomes irrelevant, and one obtains a regularity estimate for the full solution \(u(t,\cdot)\).} 

    \item \textnormal{We note that one could replace the assumptions \eqref{growth} and \eqref{regularity2} by \( \boldsymbol b\in L^1((0,T);L^\infty(\mathbb R^d)) \)  and \eqref{regularity}, respectively. In that case, the norms on the right-hand side of \eqref{gagliardo} should be taken over the region \(B_{2R+2\lambda}\), where \( \lambda=\int_0^T \|\boldsymbol b(t,\cdot)\|_{L^\infty(\mathbb R^d)}\,\dd t, \) rather than over the whole space \(\mathbb R^d\).}

    \item \textnormal{We stress that as explained in \Cref{qualitativesub}, \Cref{regularityforPhi} is always valid for some $\Phi$ and $\Phi_0$. Nevertheless, there is an interesting perspective on \Cref{bressan}, which is, instead of working with $\Phi$, to construct $g$-weights that lead to $g$-maximal operators with the Lusin mean value property, Wiener bound, and the Osgood condition, and inspecting which integrability properties of $D \boldsymbol{b}$ are necessary.}
\end{itemize}

\end{remark}

Our argument is deeply inspired by that of Bruè and Nguyen in \cite{bruenguyen}. Nonetheless, while they had the pointwise estimate \eqref{estimate} at their disposal, we must rely on the following weaker estimate based on \eqref{estimatenew}, which is significantly more complex to handle. We begin with the analogue of \cite[Corollary 2.11]{bruenguyen}. Henceforth, $L\geq 1$ will stand for the compressibility constant 
\begin{equation}
    L\coloneqq\exp\left(\int_0^T\|\Div \boldsymbol{b}(t,\cdot)\|_{L^\infty(\mathbb{R}^d)}\,\dd t\right). \label{defL}
\end{equation}


\begin{lemma}[The pointwise bound]\label{pointwisefortransport}
    Let $\boldsymbol{b}$ and $u_0$ be vector field and function as in \Cref{bressan}. 
    
    Then, there exist measurable functions $v(t,x),w(t,x):[0,T]\times \mathbb{R}^d\mapsto \mathbb{R}\cup \{\infty\}$ such that for any $u_0$ initial data of \eqref{transport}, the unique bounded solution $u$ of \eqref{transport} satisfies for almost every $x,\, h\in B_R$ and all $t\in (0,T)$
    \[|u(t,x+h)-u(t,x)|\leq \big(v(t,x)+v(t,x+h)\big)\,G^{-1}\bigg(G(|h|)+w(t,x)+w(t,x+h)\bigg)\]
    where for $L$ given by \eqref{defL}, it holds that
    \[\begin{split} \pseudo {v(t,\cdot)}_{\log L(B_{2R})}\,\dd t&\leq  C_d \left(\mathcal L^d(B_{2R})+L\|u_0\|_{\bv(\mathbb{R}^d)}\right)\quad\text{and}\\ \|w(t,\cdot)\|_{L^1(B_{2R})}\,\dd x&\leq C_{d,R,\Phi} L \left(\int_0^T\|D\boldsymbol{b}(\tau,\cdot)\|_{L^1(\mathbb R^d)}\,\dd \tau+ \int_0^T \Vert D\boldsymbol{b}(\tau,\cdot)\|_{L^{\Phi_0}(\mathbb R^d)}\,\dd \tau\right).\end{split}\]
\end{lemma}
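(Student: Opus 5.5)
The plan is to combine the representation \eqref{repr} with the Lusin-type inequality of \Cref{meanvalue} for the $\bv$ datum $u_0$ and with the Osgood flow estimate of \Cref{quantitative}, applied to the backward flow $\boldsymbol Y(t,\cdot)=\boldsymbol X(0,t,\cdot)$. By \eqref{compressibility2}, $\boldsymbol Y(t,\cdot)$ is the regular Lagrangian flow, from time $t$ down to time $0$, of the reversed field $-\boldsymbol b(t-\tau,\cdot)$. This reversed field satisfies the same hypotheses \eqref{growth}, \eqref{regularity2} and \eqref{compressibility2}, with the same $\|D\boldsymbol b\|$ norms and with compressibility constant $L$ from \eqref{defL}. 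Since $D\boldsymbol b\in L^1_tL^1_x$ globally, I would work with $\lambda=\infty$, as allowed by \Cref{regularityforPhi}(3) and \Cref{slowvaryingg}. In that case no truncation and no sublevel set are needed.

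\textbf{Step 1 (datum).} I apply the classical Lusin inequality to $u_0\in\bv$, which corresponds to $g\equiv 1$ in \Cref{meanvalue}; it extends to $\bv$ by approximation, with the Hardy--Littlewood maximal function of the measure $|Du_0|$. This gives
\[
|u_0(\xi)-u_0(\zeta)|\le C_d\bigl(\operatorname M|Du_0|(\xi)+\operatorname M|Du_0|(\zeta)\bigr)|\xi-\zeta|
\]
for a.e. $\xi,\zeta$. I evaluate it at $\xi=\boldsymbol Y(t,x+h)$ and $\zeta=\boldsymbol Y(t,x)$, and set $v(t,x):=C_d\operatorname M|Du_0|(\boldsymbol Y(t,x))$.

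Weak-$L^1$ boundedness of $\operatorname M$ gives $\mathcal L^d\{\operatorname M|Du_0|>s\}\le C_d\|u_0\|_{\bv}/s$. Hence $\int_{B_{2R}}\log(1+\operatorname M|Du_0|)\le C_d(\mathcal L^d(B_{2R})+\|u_0\|_{\bv})$, via the layer-cake formula split at $s=1$ with $\int_1^\infty \frac{\dd s}{(1+s)s}<\infty$. Composition with $\boldsymbol Y(t,\cdot)$ costs at most the factor $L$, since $\boldsymbol Y(t,\cdot)_\#\mathcal L^d\le L\mathcal L^d$. A small care point: the composition moves $B_{2R}$ outside itself, but the weak bound is global, so I bound the pullback of the full level set instead. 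This yields the $\log L$ bound on $v$.

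\textbf{Step 2 (flow).} I apply \Cref{quantitative} in its $\lambda=\infty$ form to the backward flow, running from time $t$ to time $0$. This gives
\[
G(|\boldsymbol Y(t,x+h)-\boldsymbol Y(t,x)|)\le G(|h|)+w(t,x)+w(t,x+h),
\]
with $w(t,x):=\int_0^t C_{d,g}(\operatorname M^g_\infty D\boldsymbol b)(\tau,\boldsymbol X(\tau,t,x))\,\dd\tau$. Since $G$ is increasing, this is $|\boldsymbol Y(t,x+h)-\boldsymbol Y(t,x)|\le G^{-1}(G(|h|)+w(t,x)+w(t,x+h))$. Inserting this into Step 1 gives the pointwise bound.

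\textbf{Step 3 ($L^1$ bound on $w$).} By Tonelli and the compressibility of $\boldsymbol X(\tau,t,\cdot)$, $\|w(t,\cdot)\|_{L^1(B_{2R})}\le L\int_0^t\|\operatorname M^g_\infty D\boldsymbol b(\tau,\cdot)\|_{L^1(U_\tau)}\,\dd\tau$. Here $U_\tau$ must be bounded for \Cref{constructiong2} to give constants depending only on $R$. Two routes are available. Under the remark's variant with $\boldsymbol b\in L^1_tL^\infty_x$, I take $U_\tau=B_{2R+\lambda}$. In general, \Cref{constructiong2} with $U=\mathbb R^d$ still works if its constant is tracked: the $\mathcal L^d(U)$ dependence only enters through $A$ in Step~2 of the proof of \Cref{constructiong}, and that term can be absorbed by the global $L^1$ norm of $D\boldsymbol b$. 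Either route gives $C_{d,R,\Phi}L(\|D\boldsymbol b\|_{L^1_tL^1_x}+\|D\boldsymbol b\|_{L^1_tL^{\Phi_0}_x})$.

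The main obstacle is Step 3: getting a global or uniformly localized Wiener bound for $\operatorname M^g_\infty$ without the sublevel truncation, together with rigorously justifying the Osgood differential inequality for the backward renormalized flow with $\lambda=\infty$. I would first attempt the global version of \Cref{constructiong2}, tracking constants, before resorting to localization.
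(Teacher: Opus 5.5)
Your plan is the paper's: the representation \eqref{repr}, the classical Lusin inequality for $u_0$ with $v=\operatorname M(Du_0)\circ\boldsymbol Y$, the Osgood estimate of \Cref{quantitative} for the backward flow with $w=\int_0^t k$, compressibility plus a weak-$L^1$ layer-cake argument for $v$, and \Cref{constructiong2} for $w$. Steps~1 and~2 are fine. In fact your composition $\boldsymbol X(\tau,t,x)$ inside $k$ is the correct one; the paper writes $\boldsymbol Y(\tau,x)$.

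The gap is Step~3, which you leave open, and neither of your routes closes it for the lemma as stated. Route (a) needs $\boldsymbol b\in L^1_tL^\infty_x$ to confine $\boldsymbol X(\tau,t,B_{2R})$ to a fixed ball. That is the hypothesis of the variant in the remark, not of this lemma. Route (b) fails. In the proof of \Cref{constructiong} the levels $t\le 2A$ are bounded trivially by $\mathcal L^d(U)$, which produces the term $\mathcal L^d(U)\,A\propto\mathcal L^d(U)\,\|u\|_{L^1}$. This is $+\infty$ for $U=\mathbb R^d$ and cannot be absorbed into $\|D\boldsymbol b\|_{L^1}$. The covering argument of that proof only gives $\mathcal L^d\{\operatorname M^g_\infty u>t\}\lesssim\|u\|_{L^1}/(\gamma t)$ at low levels, which is not integrable at $t=0$. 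The paper settles this step in one line by citing compressibility and \Cref{constructiong2}, so this localization is precisely what has to be supplied.

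The missing idea is the device you already use for $v$. You never need $\operatorname M^g_\infty D\boldsymbol b(\tau,\cdot)$ to be integrable on a large set: after composition with the flow it is integrated against a measure of total mass $\mathcal L^d(B_{2R})$ and density at most $L$. Let $f_\tau:=\operatorname M^g_\infty D\boldsymbol b(\tau,\cdot)$, and let $\nu_\tau$ be the push-forward of $\mathbbm 1_{B_{2R}}\mathcal L^d$ under $\boldsymbol X(\tau,t,\cdot)$. Then
\[
\int_{B_{2R}} f_\tau\bigl(\boldsymbol X(\tau,t,x)\bigr)\,\dd x=\int_0^\infty \nu_\tau(\{f_\tau>s\})\,\dd s\le\int_0^\infty \min\bigl\{\mathcal L^d(B_{2R}),\,L\,\mathcal L^d(\{f_\tau>s\})\bigr\}\,\dd s .
\]
Now run Step~2 of the proof of \Cref{constructiong} on the right-hand side:
\begin{itemize}
\item bound the levels $s\le 2A$ by $\mathcal L^d(B_{2R})$;
\item for $s>2A$, use \eqref{weakbound} for level sets in all of $\mathbb R^d$.
\end{itemize}
The second item is legitimate for $\lambda=\infty$. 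That estimate involves only $\|D\boldsymbol b(\tau,\cdot)\|_{L^1(\mathbb R^d)}$ and a Besicovitch covering by balls of radius $<R_0$, never the boundedness of $U$. If you prefer, exhaust by $U=B_N$; the bound is uniform in $N$.

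The rest of that proof and of \Cref{constructiong2} then goes through. The constant $1$ in \eqref{domination1} is only integrated over $\{|u|>\mu R_0^{-d}\}$, because $\widetilde\Phi$ vanishes on $[0,R_0^{-d}]$. This set has measure at most $R_0^d\|u\|_{L^1}/\mu$. Since $L\ge1$, you get
\[
C_{d,R,\Phi}\,L\bigl(\|D\boldsymbol b(\tau,\cdot)\|_{L^1}+\|D\boldsymbol b(\tau,\cdot)\|_{L^{\Phi_0}}\bigr),
\]
and integrating over $\tau\in(0,t)$ gives the stated bound on $w$.
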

\begin{proof}
    We will work with the Lagrangian representation \eqref{repr}. The main observation to have in mind here is that \Cref{quantitative} can be applied $\boldsymbol{Y}$, yielding the bound
    \begin{equation}
       |\boldsymbol{Y}(t,x+h)-\boldsymbol{Y}(t,x)|\leq G^{-1}\left(G(|h|)+ \int_0^t\left(k(\tau,x+h)+k(\tau,x)\right)\,\dd \tau\right) \label{estimateY}
    \end{equation}
    where the function $k$ can be taken explicitly
    \[k(x,\tau)\coloneqq C_d\operatorname{M}_{\infty}^g D\boldsymbol{b}(\tau,\boldsymbol{Y}(\tau,x)).\]
    
    Therefore, by \eqref{repr}, the classical Lusin--Lipschitz mean value inequality yields 
    \[\begin{split}
    &|u(t,x+h)-u(t,x)|\\&=|u_0(\boldsymbol{Y}(t,x+h))-u_0(\boldsymbol{Y}(t,x))|\\&\leq \big(\operatorname{M} D u_0(\boldsymbol{Y}(t,x+h))+\operatorname{M} D u_0(\boldsymbol{Y}(t,x))\big)|\boldsymbol{Y}(t,x+h)-\boldsymbol{Y}(t,x)|\\ &\leq 
    \big(\operatorname{M} D u_0(\boldsymbol{Y}(t,x+h))+\operatorname{M} D u_0(\boldsymbol{Y}(t,x))\big) G^{-1}\bigg(G(|h|)+ \int_0^t\big(k(\tau,x+h)+k(\tau,x)\big)\,\dd \tau\bigg).
    \end{split}\]
    Let us thus define
    \[v(t,x)\coloneqq \operatorname{M} (D u_0)(\boldsymbol{Y}(t,x))\quad \text{and}\quad w(t,x)\coloneqq \int_0^tk(\tau,x)\,\dd \tau.\]

    The $L^\infty((0,T); L^1(2B_R))$-bounds of $w\in L^{1}((0,T)\times \mathbb{R}^d)$ follow directly from the compressibility of $\boldsymbol{Y}$, Lemmas \ref{constructiong} and \ref{quantitative}, and Corollary \ref{constructiong2}. As  for the estimate for $v(t,x)$, notice that again by the compressibility of $\boldsymbol{Y}$ and the Hardy--Littlewood inequality $\operatorname{M} (D u_0)\in L^1_w(\mathbb{R}^d)$, we have that
    \[\begin{split}
    \pseudo{v(t,\cdot)}_{\log (B_R)}  =\int_{B_R}\log(1+|v(t,x)|)\,\dd x=&\int_{B_R}\int_0^{|v(t,x)|}\frac{1}{1+\tau}\,\dd \tau\,\dd x\\=&\int_0^\infty\int_{B_R\cap\{|v(t,\cdot)|>\tau\}}\frac{1}{1+\tau}\,\dd x\,\dd \tau\\ =&\int_0^\infty\frac{\mathcal L^d\{x\in B_R: |v(t,x)|>\tau\}}{1+\tau}\,\dd\tau\\\leq& \log(2)\mathcal L^d(B_R)+\pseudo{v(t,\cdot)}_{L^1_w(\mathbb{R}^d)}\\\leq & C_d\left(\mathcal L^d(B_R)+L|Du_0|(\mathbb{R}^d)\right),\end{split}\]
    by the Cavalieri principle and the definition of the weak $L^1$ pseudo-norm \[\pseudo{f}_{L^1_w} = \sup_{\lambda >0} \lambda \operatorname{meas} \{|f|>\lambda\}.\] The proof is complete.
\end{proof}

We next prove the analogue of \cite[Lemma 2.18]{bruenguyen}. Since the argument is of a purely functional-analytic nature, we state it in a slightly more general form.

\begin{lemma}\label{lemma217}
Assume that \(u\in L^1(\mathbb R^d)\) satisfies the pointwise estimate
\begin{equation}\label{pointwiseforu}
    |u(x+h)-u(x)|
    \leq
    \bigl(v(x+h)+v(x)\bigr)\,
    G^{-1}\bigl(G(|h|)+w(x+h)+w(x)\bigr),
\end{equation}
where \(v,w : \mathbb R^d \to [0,\infty]\) are measurable and such that \(\log_+ v,w\in L^1_{\loc}(\mathbb R^d)\), and
\(G:(0,a]\to(-\infty,0]\)
is an increasing \(C^1\) function with $G(0+) = -\infty$.

Then, for every \(R>0\) and every \(h\in B_R\) such that \(G(|h|)\leq -a\), one has
\[
\begin{aligned}
\int_{B_R}\min\big\{a^2,|u(x+h)-u(x)|^2\big\}\,\dd x
\leq C_d\Bigg[
&G^{-1}\bigl(G(|h|)+a\bigr)\|u\|_{L^1(\mathbb R^d)}
\\
&+
\int_a^{-G(|h|)}
\partial_\lambda\bigg(\Big(G^{-1}\bigl(G(|h|)+\lambda\bigr)\Big)^2\bigg)
\,\mu(h,\lambda)\,\dd\lambda
\Bigg],
\end{aligned}
\]
where
\begin{align}
    \mu(h,\lambda) 
&\coloneqq
\mathcal L^d
\bigg\{
x\in B_R: \nonumber \\ & \quad
\bigl(v(x+h)+v(x)\bigr)\,
G^{-1}\bigl(G(|h|)+w(x+h)+w(x)\bigr)
\geq
G^{-1}\bigl(G(|h|)+\lambda\bigr)
\bigg\}. \label{defmu}
\end{align}
\end{lemma}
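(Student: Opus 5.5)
Set $\ell(\lambda)\coloneqq G^{-1}(G(|h|)+\lambda)$ for $0\le\lambda\le -G(|h|)$. This is an increasing $C^1$ function with $\ell(0)=|h|$ and $\ell(-G(|h|))=G^{-1}(0)$. Let $F(x)\coloneqq (v(x+h)+v(x))\,G^{-1}\bigl(G(|h|)+w(x+h)+w(x)\bigr)$, with the convention $G^{-1}(s)=\infty$ (or the top value $a$) once $s\ge 0$. By \eqref{pointwiseforu} we have $|u(x+h)-u(x)|\le F(x)$, and by \eqref{defmu} we have $\mu(h,\lambda)=\mathcal L^d\{x\in B_R: F(x)\ge \ell(\lambda)\}$. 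The plan is to split $B_R$ according to the size of $F$ relative to the level $\ell(a)$, then run a layer-cake decomposition in the reparametrized variable $\lambda$.

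\emph{Step 1 (good set).} On $E_a\coloneqq\{x\in B_R: F(x)<\ell(a)\}$ we bound $\min\{a^2,|u(x+h)-u(x)|^2\}\le a\,|u(x+h)-u(x)|$. We then use $|u(x+h)-u(x)|\le |u(x+h)|+|u(x)|$ only where necessary; more efficiently, $|u(x+h)-u(x)|^2\le F(x)\,|u(x+h)-u(x)|\le \ell(a)\bigl(|u(x+h)|+|u(x)|\bigr)$. Integrating gives at most $2\ell(a)\|u\|_{L^1}$, which is the first term with $\ell(a)=G^{-1}(G(|h|)+a)$. The hypothesis $G(|h|)\le -a$ guarantees that $\ell(a)$ is well defined.

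\emph{Step 2 (bad set, layer cake).} On $B_R\setminus E_a$ we use $\min\{a^2,|\Delta_h u|^2\}\le \min\{a^2,F^2\}$ and Cavalieri's principle:
\[
\int_{B_R\setminus E_a}\min\{a^2,F^2\}\,\dd x=\int_0^{a^2}\mathcal L^d\{x\in B_R\setminus E_a:F(x)^2> s\}\,\dd s .
\]
For $s\le \ell(a)^2$ the integrand is at most $\mathcal L^d(B_R\setminus E_a)=\mu(h,a)$. For $s>\ell(a)^2$ we substitute $s=\ell(\lambda)^2$, so that $\dd s=\partial_\lambda(\ell(\lambda)^2)\,\dd\lambda$ and the integrand equals $\mu(h,\lambda)$. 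The upper limit $s=a^2$ corresponds to some $\lambda\le -G(|h|)$ once $G^{-1}(0)\ge a$, which holds after renormalising $G$ on $(0,a]$ as in the statement. This produces
\[
\ell(a)^2\mu(h,a)+\int_a^{-G(|h|)}\partial_\lambda(\ell(\lambda)^2)\,\mu(h,\lambda)\,\dd\lambda .
\]

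\emph{Step 3 (absorb the boundary term).} The term $\ell(a)^2\mu(h,a)$ must be absorbed. First option: note that $\ell(a)^2\mu(h,a)\le \ell(a)^2\mathcal L^d(B_R)$, and try to control it by the integral via monotonicity of $\mu$ in $\lambda$. Second option, which I expect to be cleaner: run the Cavalieri integral in Step 2 starting from $\ell(0)^2=|h|^2$ over the region $s\in(|h|^2,\ell(a)^2]$, and handle the pieces with $s\le\ell(a)^2$ by the Step 1 device, i.e.\ bound $\min\{a^2,|\Delta_hu|^2\}\le \ell(a)\,|\Delta_h u|$ wherever $|\Delta_h u|\le \ell(a)$. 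That is, split by the size of $|\Delta_h u|$ itself rather than by $F$. Then only the region $|\Delta_h u|>\ell(a)$ needs the layer cake, and there $\{|\Delta_hu|^2>s\}\subset\{F\ge \ell(\lambda)\}$ for $s=\ell(\lambda)^2$, with $\lambda\ge a$. The remaining constant part $\ell(a)^2\,\mathcal L^d\{|\Delta_hu|>\ell(a)\}$ is bounded by Chebyshev by $\ell(a)\|\Delta_h u\|_{L^1}\le 2\ell(a)\|u\|_{L^1}$. This delivers exactly the claimed form.

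The main obstacle is the bookkeeping near the upper endpoint: $G^{-1}$ is defined only on $(-\infty,0]$, and $w$ may be infinite. I would handle this by the convention that the event $\{F\ge\ell(\lambda)\}$ for $\lambda$ near $-G(|h|)$ contains all points where $w(x)+w(x+h)\ge -G(|h|)$. I would also use the truncation at $a^2$ to ensure that the layer cake never needs levels beyond $\ell(-G(|h|))$. Measurability of $F$ follows from the measurability of $v$ and $w$ together with the continuity of $G^{-1}$.
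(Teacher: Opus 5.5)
Your final argument (the ``second option'' in Step 3) is correct and is essentially the paper's proof. Splitting according to whether $|u(x+h)-u(x)|\le \ell(a)$ is the same as splitting the Cavalieri integral in $s$ at $\ell(a)=G^{-1}(G(|h|)+a)$: the low part is bounded by $\ell(a)\int_{B_R}|u(x+h)-u(x)|\,dx\le 2\ell(a)\|u\|_{L^1}$, and the high part becomes the $\mu$-integral via $s=G^{-1}(G(|h|)+\lambda)$ and $\{|u(x+h)-u(x)|>\ell(\lambda)\}\subset\{F\ge \ell(\lambda)\}$.

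Drop the $F$-based split of Steps 1--2: the boundary term $\ell(a)^2\mu(h,a)$ cannot be absorbed, because $\mu(h,\cdot)$ is nonincreasing, so your ``first option'' goes the wrong way. Also note that the upper limit $\lambda=-G(|h|)$ corresponds to $s=a$ only because $G(a)=0$, as for the paper's $G(r)=\int_a^r \frac{du}{u\,g(u)}$.
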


Before proving this result, let us note that in the special case \(G\equiv \log\) and $a=1$, \eqref{pointwiseforu} may be rewritten as
\[
|u(x+h)-u(x)|
\leq
|h|\,\exp\bigl\{\widetilde w(x+h)+\widetilde w(x)\bigr\},
\]
with
\(
\widetilde w(x):=2w(x)+2\log_+ v(x).
\)
Thus, \Cref{lemma217} yields
\begin{equation}\label{lemma217log}
\int_{B_R}\min\{1,|u(x+h)-u(x)|^2\}\,\dd x
\leq
C_d\left(
|h|\,\|u\|_{L^1(\mathbb R^d)}
+
|h|^2
\int_1^{-\log|h|}
e^{2\lambda}\nu(\lambda)\,\dd\lambda
\right),
\end{equation}
where
\(
\nu(\lambda)
\coloneqq
\mathcal L^d\bigl(\{x\in B_R:2\widetilde w(x)\geq \lambda\}\bigr).
\)
This is precisely \cite[Lemma 2.18]{bruenguyen} in the case of \(L^1((0,T);W^{1,p}(\mathbb R^d))\) vector fields. In particular, \Cref{lemma217} shows that \eqref{lemma217log} remains valid even in the limiting case of \(L^1((0,T);W^{1,L\log L}(\mathbb R^d))\) vector fields.

\begin{proof}
By Cavalieri's principle,
\[
\begin{aligned}
\int_{B_R}\min\big\{a^2,|u(x+h)-u(x)|^2\big\}\,\dd x
&=
2\int_0^{G^{-1}(G(|h|)+a)}
s\,\mathcal L^d\bigl(\{x\in B_R:|u(x+h)-u(x)|>s\}\bigr)\,\dd s
\\
&\quad
+
2\int_{G^{-1}(G(|h|)+a)}^a
s\,\mathcal L^d\bigl(\{x\in B_R:|u(x+h)-u(x)|>s\}\bigr)\,\dd s.
\end{aligned}
\]
For the first integral, we use the elementary bound
\[
\int_{B_R}|u(x+h)-u(x)|\,\dd x
\leq
2\|u\|_{L^1(\mathbb R^d)},
\]
and obtain
\[
2\int_0^{G^{-1}(G(|h|)+a)}
s\,\mathcal L^d\bigl(\{x\in B_R:|u(x+h)-u(x)|>s\}\bigr)\,\dd s
\leq
C_d\,G^{-1}\bigl(G(|h|)+a\bigr)\|u\|_{L^1(\mathbb R^d)}.
\]
It therefore remains to estimate the second integral.

To this end, perform the change of variables
\[
s=G^{-1}\bigl(G(|h|)+\lambda\bigr).
\]
Then
\[
s\,\dd s
=
\frac{G^{-1}\bigl(G(|h|)+\lambda\bigr)}
{G'\bigl(G^{-1}(G(|h|)+\lambda)\bigr)}
\,\dd\lambda
=
\frac12
\partial_\lambda
\bigg(
\Big(G^{-1}\bigl(G(|h|)+\lambda\bigr)\Big)^2
\bigg)\,\dd\lambda.
\]
Since \(G\) is increasing, the map
\(
\lambda\mapsto G^{-1}\bigl(G(|h|)+\lambda\bigr)
\)
is increasing as well, and therefore, for $G^{-1}$ is nonnegative,
\[
\partial_\lambda
\bigg(
\Big(G^{-1}\bigl(G(|h|)+\lambda\bigr)\Big)^2
\bigg)\geq 0.
\]
Using \eqref{pointwiseforu}, we conclude that
\[
\begin{aligned}
2\int_{G^{-1}(G(|h|)+a)}^a
s\,\mathcal L^d&\bigl(\{x\in B_R:|u(x+h)-u(x)|>s\}\bigr)\,\dd s \\
&\leq
\int_a^{-G(|h|)}
\partial_\lambda
\bigg(
\Big(G^{-1}\bigl(G(|h|)+\lambda\bigr)\Big)^2
\bigg)
\mu(h,\lambda)\,\dd\lambda.
\end{aligned}
\]
Combining the two estimates proves the lemma.
\end{proof}

Finally, we prove the analogue of the so-called ``key lemma'' \cite[Proposition 2.16]{bruenguyen} in the more general setting where one only has the weaker pointwise estimate \eqref{pointwiseforu}. As in \Cref{lemma217}, we state the result in a form that is purely functional-analytic. 

\begin{lemma}[Key lemma]\label{keylemma} 
Let \(u\in L^1(\mathbb R^d)\) satisfy \eqref{pointwiseforu}, where \( v,w:\mathbb R^d\to [0,\infty) \) are measurable and such that \(\log_+ v,w\in L^1_{\loc}(\mathbb R^d)\), and \[ G(r)=\int_a^r \frac{\dd s}{s\,g(s)}, \] where \(g:\mathbb R_+\to\mathbb R_+\) is bounded away from zero. Assume moreover that \( G(0+)=-\infty. \) 

Then, for every \(R>0\) and every \(\delta_a\in(0,R)\) such that \(G(\delta_a)<-a\), one has \[ \int_{B_{\delta_a}}\int_{B_R} \frac{\min\{a^2,|u(x+h)-u(x)|^2\}}{|h|^d\,g(|h|)}\,\dd x\,\dd h \leq C_{d,g,a} \left( \|u\|_{L^1(\mathbb R^d)} + \pseudo{v}_{\log L(B_{2R})} + \|w\|_{L^1(B_{2R})} \right). \] 
\end{lemma}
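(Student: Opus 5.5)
The plan is to integrate the estimate of \Cref{lemma217} against the measure \(\dd h/(|h|^d g(|h|))\) over \(B_{\delta_a}\). In polar coordinates \(h=r\sigma\) this measure becomes \(\dd\sigma\,\dd r/(r g(r))=\dd\sigma\,\dd G(r)\). The natural variable is therefore \(\rho=G(r)\in(-\infty,G(\delta_a))\), and all the work reduces to one-dimensional integrals in \(\rho\). Since \(\delta_a<R\) and \(G(\delta_a)<-a\), \Cref{lemma217} applies to every \(h\in B_{\delta_a}\). Note also that \(G(a)=0\), so \(G^{-1}(0)=a\), and that points \(x\in B_R\), \(x+h\) with \(|h|<\delta_a\) all lie in \(B_{2R}\).

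\emph{First term.} It contributes \(C_d\|u\|_{L^1}\int_{-\infty}^{G(\delta_a)}G^{-1}(\rho+a)\,\dd\rho\). I will substitute \(\sigma=\rho+a\) and then \(s=G^{-1}(\sigma)\), so that \(\dd\sigma=\dd s/(s g(s))\). The integral then becomes \(\int_0^{G^{-1}(G(\delta_a)+a)}\dd s/g(s)\), and this is finite because \(g\ge\gamma>0\). This gives the \(\|u\|_{L^1}\) term.

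\emph{Second term.} The first step is to bound \(\mu(h,\lambda)\) by distribution functions independent of \(h\). If \(x\) belongs to the set in \eqref{defmu}, then either \(w(x)+w(x+h)\ge\lambda/2\), or
\[
v(x)+v(x+h)\ \ge\ \frac{G^{-1}(\rho+\lambda)}{G^{-1}(\rho+\lambda/2)}.
\]
The key observation for the second alternative is that \(\dd\log s=g(s)\,\dd G(s)\ge\gamma\,\dd G(s)\). Hence \(\log\) of the ratio above is at least \(\gamma\lambda/2\), whatever \(\rho\) is. Splitting each sum into its two summands and translating by \(h\), I expect
\[
\mu(h,\lambda)\le 2\,\mathcal L^d\{B_{2R}: w\ge\lambda/4\}+2\,\mathcal L^d\{B_{2R}:\log_+ v\ge \gamma\lambda/2-\log 2\}=:N(\lambda),
\]
uniformly in \(h\).

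Then I apply Tonelli and integrate first in \(\rho\) for fixed \(\lambda\ge a\). The constraint \(\lambda\le-\rho\) together with \(\partial_\lambda f(\rho+\lambda)=\partial_\rho f(\rho+\lambda)\), where \(f=(G^{-1})^2\), gives
\[
\int_{-\infty}^{-\lambda}\partial_\lambda\bigl(G^{-1}(\rho+\lambda)\bigr)^2\,\dd\rho\le (G^{-1}(0))^2=a^2 .
\]
So the second term is at most \(C_d a^2\int_a^\infty N(\lambda)\,\dd\lambda\). By Cavalieri's principle this is bounded by \(C_{g,a}\bigl(\|w\|_{L^1(B_{2R})}+\int_{B_{2R}}\log_+v+\mathcal L^d(B_{2R})\bigr)\).

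Finally, \(\log_+ v\le\log(1+v)\) converts the \(v\)-term into \(\pseudo{v}_{\log L(B_{2R})}\). The additive volume term comes from the \(-\log 2\) shift; I would absorb it into \(C_{d,g,a}\) (which may depend on \(R\)), or avoid it by splitting the thresholds slightly differently.

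The main obstacle I expect is the \(h\)-uniform bound on \(\mu\), namely showing that the ratio \(G^{-1}(\rho+\lambda)/G^{-1}(\rho+\lambda/2)\) grows exponentially in \(\lambda\) independently of \(\rho\). This is exactly where the lower bound \(g\ge\gamma\) enters. After that, the justification of Tonelli and the identity for the \(\rho\)-integral are routine.
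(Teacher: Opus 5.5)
Your proposal is correct and follows the paper's proof essentially step by step. The shared steps are polar coordinates with \(\rho=G(r)\) (the paper writes this as \(\partial_\lambda(\cdot)\,G'(r)=\partial_r(\cdot)\)), the same substitution \(s=G^{-1}(\rho+a)\) for the \(\|u\|_{L^1}\) term, and an \(h\)-uniform splitting of \(\mu(h,\lambda)\) into a \(w\)-part and a \(v\)-part driven by \(g\ge\gamma\). Your ratio bound \(G^{-1}(\rho+\lambda)/G^{-1}(\rho+\lambda/2)\ge e^{\gamma\lambda/2}\) is the paper's \(G(zy)-G(y)\le\gamma^{-1}\log z\) read backwards, and the telescoping bound \(\le a^2\) followed by Cavalieri is also the same. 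The one point to fix is the \(-\log 2\) shift: an additive \(\mathcal L^d(B_{2R})\) term cannot be absorbed into the multiplicative constant \(C_{d,g,a}\), so you must take your second option. The paper does this via \(\log_+(s+t)\le\log(1+s)+\log(1+t)\), which gives \(\log(1+v)\ge\gamma\lambda/4\) at \(x\) or \(x+h\) and no volume term.
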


\begin{proof} 

Combining \Cref{lemma217} with polar coordinates, we obtain \begin{align}\int_{B_{\delta_a}}&\int_{B_R} \frac{\min\{a^2,|u(x+h)-u(x)|^2\}}{|h|^d\,g(|h|)}\,\dd x\,\dd h \nonumber \\&\leq C_d\|u\|_{L^1(\mathbb R^d)} \int_0^{\delta_a} G^{-1}\bigl(G(r)+a\bigr)\frac{\dd r}{r\,g(r)} \nonumber\\ &\quad + C_d \int_0^{\delta_a}\int_a^{-G(r)} \partial_\lambda\bigg( \Big(G^{-1}\bigl(G(r)+\lambda\bigr)\Big)^2 \bigg) \nu(r,\lambda)\frac{\dd\lambda\,\dd r}{r\,g(r)} \nonumber\\ &=:\operatorname{(I)}+\operatorname{(II)}, \label{keylemmainequality} 
\end{align} 
where, for \(\mu(h,\lambda)\) given by \eqref{defmu}, \[ \nu(r,\lambda):= \int_{\mathbb S^{d-1}}\mu(r\omega,\lambda)\,\mathcal H^{d-1}(\dd\omega). \] 

We now estimate \(\operatorname{(I)}\) and \(\operatorname{(II)}\) separately. 

\textit{Step 1: estimate of \(\operatorname{(I)}\).} Since \(G'(r)=\frac{1}{r\,g(r)}\), we can rewrite \(\operatorname{(I)}\) as \[ \operatorname{(I)} = C_d\|u\|_{L^1(\mathbb R^d)} \int_0^{\delta_a} G^{-1}\bigl(G(r)+a\bigr)\,G'(r)\,\dd r. \] 
Performing the change of variables \( s=G^{-1}\bigl(G(r)+a\bigr), \) for which \(G'(s)\,\dd s=G'(r)\,\dd r\) yields 
\[ \int_0^{\delta_a} G^{-1}\bigl(G(r)+a\bigr)\,G'(r)\,\dd r = \int_0^{s_a} s\,G'(s)\,\dd s = \int_0^{s_a}\frac{\dd s}{g(s)}, \] 
where \( s_a:=G^{-1}\bigl(G(\delta_a)+a\bigr). \) Since \(g\) is bounded from below, the latter integral is finite. Consequently, 
\begin{equation} 
\operatorname{(I)}\leq C_{d,g,a}\|u\|_{L^1(\mathbb R^d)}. \label{estimateI} 
\end{equation} 

\textit{Step 2: first decomposition of \(\operatorname{(II)}\).} Again using \(G'(r)=\frac{1}{r\,g(r)}\), we may write 
\[ \begin{aligned} \operatorname{(II)} &= \int_a^\infty \int_0^{\min\{G^{-1}(-\lambda),\delta_a\}} \partial_\lambda\big([G^{-1}(G(r)+\lambda)]^2\big)\,G'(r)\,\nu(r,\lambda)\,\dd r\,\dd\lambda \\ &= \int_a^\infty \int_0^{\min\{G^{-1}(-\lambda),\delta_a\}} \partial_r\big([G^{-1}(G(r)+\lambda)]^2\big)\,\nu(r,\lambda)\,\dd r\,\dd\lambda. \end{aligned} \] 
In order to estimate \(\nu(r,\lambda)\), let us introduce \[ y(x,h):= G^{-1}\bigl(G(|h|)+w(x+h)+w(x)\bigr). \] 
Then, \[ \begin{aligned} G&\Big( \big(v(x+h)+v(x)\big) G^{-1}\big(G(|h|)+w(x+h)+w(x)\big) \Big)-G(|h|) \nonumber \\ &= \Big[ G\big((v(x+h)+v(x))\,y(x,h)\big)-G(y(x,h)) \Big] + \big(w(x+h)+w(x)\big). \end{aligned} \] 
Therefore, by \eqref{defmu} and the monotonicity of \(G\), 
\begin{align} 
\mu(h,\lambda) &= \mathcal L^d\bigg\{ x\in B_R: G\Big( \big(v(x+h)+v(x)\big) G^{-1}\big(G(|h|)+w(x+h)+w(x)\big) \Big)-G(|h|) \geq \lambda \bigg\} \nonumber\\ &\leq \mathcal L^d\bigg\{ x\in B_R: G\big((v(x+h)+v(x))\,y(x,h)\big)-G(y(x,h)) \geq \frac{\lambda}{2} \bigg\} \nonumber\\ &\quad + \mathcal L^d\bigg\{ x\in B_R: w(x+h)+w(x)\geq \frac{\lambda}{2} \bigg\} \nonumber\\ &=:\operatorname{(A)}+\operatorname{(B)}. \label{defAB} \end{align} 

We now estimate \(\operatorname{(A)}\) and \(\operatorname{(B)}\). 

\textit{Step 3: estimate of \(\operatorname{(A)}\).} Define the auxiliary function 
\[ H(z):= \sup_{y>0}\bigl(G(zy)-G(y)\bigr), \qquad z\geq 1. \] 
Since \(g\) is bounded below on \((0,\infty)\), we have 
\[ H(z) = \sup_{y>0}\int_y^{zy}\frac{\dd s}{s\,g(s)} \leq C_{d,g}\log z. \] 
Consequently, for $G$ is increasing,
\begin{align*}
    G\big((v(x+h)+v(x))\,y(x,h)\big)-G(y(x,h)) &\leq H\big(\max\{v(x+h)+v(x),1\}\big) \\ &\leq C_{d,g}\log_+\big(v(x+h)+v(x)\big). 
\end{align*}
Therefore, \[ \operatorname{(A)} \leq \mathcal L^d\bigg\{ x\in B_R: \log_+\big(v(x+h)+v(x)\big)\geq \frac{\lambda}{2C_{d,g}} \bigg\}. \] 
Using the elementary inequality 
\[ \log_+(a+b)\leq \log(1+a)+\log(1+b), \qquad a,b\geq 0, \] 
we deduce that 
\begin{equation} \operatorname{(A)} \leq 2\, \mathcal L^d\bigg\{ x\in B_{2R}: \log(1+v(x))\geq \frac{\lambda}{4C_{d,g}} \bigg\}. \label{estA} \end{equation} 

\textit{Step 4: estimate of \(\operatorname{(B)}\).} Exactly in the same way, one obtains 
\begin{equation} \operatorname{(B)} \leq 2\, \mathcal L^d\bigg\{ x\in B_{2R}: w(x)\geq \frac{\lambda}{4} \bigg\}. \label{estB} \end{equation} 

\textit{Step 5: estimate of \(\operatorname{(II)}\).} Combining \eqref{defAB}, \eqref{estA}, and \eqref{estB}, we infer that 
\begin{align} \sup_{r\in(0,\delta_a)}&\nu(r,\lambda)\nonumber\\ &\leq C_{d,g}\bigg( \mathcal L^d\bigg\{ x\in B_{2R}: \log(1+v(x))\geq \frac{\lambda}{4C_{d,g}} \bigg\}+ \mathcal L^d\bigg\{ x\in B_{2R}: w(x)\geq \frac{\lambda}{4} \bigg\} \bigg). \label{boundmu} \end{align} 
Since \(G(0+)=-\infty\), \(G^{-1}(0)=a\), and \(\partial_r\big([G^{-1}(G(r)+\lambda)]^2\big)\geq 0, \) we obtain by Fubini--Tonelli
\begin{align} \operatorname{(II)} &= \int_a^\infty \int_0^{\min\{G^{-1}(-\lambda),\delta_a\}} \partial_r\big([G^{-1}(G(r)+\lambda)]^2\big)\, \nu(r,\lambda)\,\dd r\,\dd\lambda \nonumber\\ &\leq \int_a^\infty \sup_{r\in(0,\delta_a)}\nu(r,\lambda) \int_0^{\min\{G^{-1}(-\lambda),\delta_a\}} \partial_r\big([G^{-1}(G(r)+\lambda)]^2\big)\,\dd r\,\dd\lambda \nonumber\\ &\leq C_d a^2 \int_a^\infty \sup_{r\in(0,\delta_a)}\nu(r,\lambda)\,\dd\lambda. \label{estimateII0} \end{align} Finally, inserting \eqref{boundmu} into \eqref{estimateII0} and using Cavalieri's principle, we arrive at \begin{equation} \operatorname{(II)} \leq C_{d,g,a} \left( \pseudo{v}_{\log L(B_{2R})} + \|w\|_{L^1(B_{2R})} \right). \label{estimateII} \end{equation} 

\textit{Step 6: conclusion.} Combining \eqref{estimateI} and \eqref{estimateII} in \eqref{keylemmainequality}, we obtain the desired estimate. \end{proof} We have thus proved \Cref{bressan}. Indeed, by \Cref{pointwisefortransport}, the function \(u(t,\cdot)\) satisfies the hypotheses of \Cref{lemma217}; mingling that lemma with \Cref{keylemma} yields exactly the estimate asserted in the theorem. 

Before closing this section, we also derive a decay estimate for the first-order difference \[ \int_{B_R}|u(t,x+h)-u(t,x)|\,\dd x, \] thereby extending the result of Bru\`e--Nguyen \cite[Remark 2.19]{bruenguyen}. We note in particular that the following statement partially answers Open Question 3.13 in that paper. Its implications will be discussed in the next section.

\begin{corollary} \label{weakbound1}
Assume the same hypotheses and notation as in \Cref{bressan}.

Then, there exists some $r_0 \in (0,R)$ such that \(0<r<r_0\), and \(0<t<T\),
\begin{align}
    \sup_{0<|h|<r}\bigg\{&|G(|h|)|
    \int_{B_R}|u(t,x+h)-u(t,x)|\,\dd x\bigg\}
    \nonumber\\
    &\leq L\,K_{d,g}(r)\,|Du_0|(\mathbb R^d)
    \nonumber \\&\quad+C_{d,R,\Phi}\,L\,\|u_0\|_{L^\infty(\mathbb R^d)} \bigg(\int_0^T \Vert D\boldsymbol{b}(\tau,\cdot) \Vert_{L^1(\mathbb R^d)}\,\dd \tau + \int_0^T \Vert D\boldsymbol{b}(\tau,\cdot) \Vert_{L^{\Phi_0}(\mathbb R^d)}\,\dd \tau\bigg),
    \label{estpontual}
\end{align}
where $L$ is given in \eqref{defL}, and \(K_{d,g}(r)=o(1)\) as \(r\to0_+\).
\end{corollary}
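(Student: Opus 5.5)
We start from the Lagrangian representation \eqref{repr} and the pointwise bound of \Cref{pointwisefortransport}, but keep the two factors separate instead of feeding them into \Cref{lemma217}. Fix $0<|h|<r$ and $t$, and write
\[
\int_{B_R}|u(t,x+h)-u(t,x)|\,\dd x=\int_{B_R}|u_0(\boldsymbol Y(t,x+h))-u_0(\boldsymbol Y(t,x))|\,\dd x .
\]
Set $\Lambda:=|G(|h|)|/2$ and split $B_R$ into a good set $E_h=\{x\in B_R: w(t,x)+w(t,x+h)\le \Lambda\}$ and its complement.

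On the bad set $B_R\setminus E_h$ we use $|u(t,\cdot)|\le \|u_0\|_{L^\infty}$ and Chebyshev's inequality:
\[
\mathcal L^d(B_R\setminus E_h)\le \frac{4}{|G(|h|)|}\,\|w(t,\cdot)\|_{L^1(B_{2R})}.
\]
Multiplied by $|G(|h|)|$, this gives exactly the second term of \eqref{estpontual}, thanks to the $L^1$ bound on $w$ from \Cref{pointwisefortransport}.

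On the good set we do not pass through the maximal function of $Du_0$, since that would only give a weak-$L^1$ bound and a $\log$ loss. Instead we use the $\bv$ structure directly. By \eqref{estimateY}, for $x\in E_h$,
\[
|\boldsymbol Y(t,x+h)-\boldsymbol Y(t,x)|\le \rho(|h|):=G^{-1}\bigl(G(|h|)/2\bigr),
\]
and $\rho(|h|)\to0$ as $h\to0$ because $G(0+)=-\infty$. The aim is then a bound of the form
\[
\int_{E_h}|u_0(\boldsymbol Y(t,x+h))-u_0(\boldsymbol Y(t,x))|\,\dd x\le C_d L\,\rho(|h|)\,|Du_0|(\mathbb R^d).
\]
To get it, mollify $u_0$ (or use the standard segment representation of $\bv$ differences), write the difference as $\int_0^1 Du_0$ along the segment joining $\boldsymbol Y(t,x)$ and $\boldsymbol Y(t,x+h)$, and change variables through $\boldsymbol Y(t,\cdot)$ using the compressibility constant $L$. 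This is the step I expect to be the main obstacle. The segment joins two image points rather than being the image of a segment, so a direct change of variables is not available. The fallback is the Lusin estimate $|u_0(a)-u_0(b)|\le (\operatorname M_\rho Du_0(a)+\operatorname M_\rho Du_0(b))|a-b|$ with the maximal operator truncated at $\rho$. One then splits $\operatorname M_\rho Du_0$ at a level $\kappa$: on $\{\operatorname M_\rho Du_0\le\kappa\}$ the contribution is $\lesssim\kappa\rho|B_R|$, while the set where it exceeds $\kappa$ has measure $\lesssim L|Du_0|/\kappa$ and there one uses $\|u_0\|_\infty$ (note the $L^\infty$ norm does multiply the second term in \eqref{estpontual}). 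One finally optimizes $\kappa$ as a function of $h$.

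Multiplying the good-set bound by $|G(|h|)|$ gives $K_{d,g}(r)=\sup_{s<r}C_d|G(s)|\,G^{-1}(G(s)/2)$ (or its optimized variant from the fallback). It remains to check that $K_{d,g}(r)=o(1)$. Setting $\sigma=-G(s)\to\infty$, we need $\sigma\,G^{-1}(-\sigma/2)\to0$. Since $g\ge\gamma>0$, we have $G(y)-G(z)\le\gamma^{-1}\log(y/z)$, which forces $G^{-1}(-\sigma/2)\le a e^{-\gamma\sigma/2}$, and this beats the linear factor $\sigma$. Choosing $r_0$ with $G(r_0)<-2a$ ensures all the manipulations above stay in the range where $G^{-1}$ is defined, and taking the supremum over $0<|h|<r$ yields \eqref{estpontual}.
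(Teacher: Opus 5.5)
There is a genuine gap. Your bad-set estimate is correct. So is the good set: the paper uses the threshold $\theta|G(|h|)|$ with $\theta\in(0,1)$, and yours is $\theta=1/2$. Your proof that $|G(s)|\,G^{-1}(G(s)/2)\to0$ via $G^{-1}(-\sigma)\le a e^{-\gamma\sigma}$ is also correct.

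The gap is the good-set bound
\[
\int_{E_h}\bigl|u_0(\boldsymbol Y(t,x+h))-u_0(\boldsymbol Y(t,x))\bigr|\,\dd x\le C_dL\rho\,|Du_0|(\mathbb R^d),
\]
which is the real content of the corollary and which you do not establish. You rightly observe that the segment representation cannot be pushed through $\boldsymbol Y(t,\cdot)$, but the fallback does not rescue it. Splitting at level $\kappa$ gives $C_d\kappa\rho\,\mathcal L^d(B_R)+C_dL\|u_0\|_{L^\infty}|Du_0|(\mathbb R^d)/\kappa$, and optimizing gives only $C_d\bigl(\rho\,\mathcal L^d(B_R)\,L\,\|u_0\|_{L^\infty}|Du_0|(\mathbb R^d)\bigr)^{1/2}$. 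After multiplying by $|G(|h|)|$ this is still $o(1)$, since your exponential bound also gives $|G(s)|\,G^{-1}(G(s)/2)^{1/2}\to0$. That would suffice for the mixing application, but it is not \eqref{estpontual}. Written as $L\,K\,|Du_0|(\mathbb R^d)$, the coefficient $K$ would depend on $R$ and on $(\|u_0\|_{L^\infty}/|Du_0|(\mathbb R^d))^{1/2}$. That ratio is unbounded on $\bv\cap L^\infty$ when $d\ge2$; take $u_0=1_{B_\varepsilon}$ with $\varepsilon\to0$. Nor can the excess be moved into the second term of \eqref{estpontual}. There $\|u_0\|_{L^\infty}$ multiplies $\int_0^T(\|D\boldsymbol b\|_{L^1}+\|D\boldsymbol b\|_{L^{\Phi_0}})$, which may be arbitrarily small. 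Any route through $\operatorname M Du_0$ loses in this way, because that function is only weak-$L^1$.

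The missing idea is to localize at scale $\rho$ instead of using segments or maximal functions. Tile $\mathbb R^d$ by disjoint open cubes $Q$ of side $\rho$, and let $Q^\star$ be the concentric cube of side $3\rho$. If $x\in E_h$ and $\boldsymbol Y(t,x)\in Q$, then $\boldsymbol Y(t,x+h)\in Q^\star$ as well. Compare both values with the mean $c_Q$ of $u_0$ on $Q^\star$, and use the compressibility of $x\mapsto\boldsymbol Y(t,x)$ and $x\mapsto\boldsymbol Y(t,x+h)$. The $\bv$ Poincar\'e inequality on cubes and the bounded overlap of $\{Q^\star\}$ then give
\[
\int_{E_h}\bigl|u_0(\boldsymbol Y(t,x+h))-u_0(\boldsymbol Y(t,x))\bigr|\,\dd x\le 2L\sum_Q\int_{Q^\star}|u_0-c_Q|\,\dd y\le C_dL\rho\sum_Q|Du_0|(Q^\star)\le C_dL\rho\,|Du_0|(\mathbb R^d).
\]
The paper runs the same localization, first for indicators $u_0=1_F$ of sets of finite perimeter. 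It bounds the relevant products of indicators by $\min\{\mathcal L^d(F\cap Q^\star),\mathcal L^d(Q^\star\setminus F)\}\le C_d\rho\,P(F,Q^\star)$ using the relative isoperimetric inequality, and then reaches general $u_0$ through the coarea formula. The Poincar\'e version above is the same argument in one step. With this bound in place, the rest of your proposal goes through and yields \eqref{estpontual} with $K_{d,g}(r)=C_d\sup_{0<s<r}|G(s)|\,G^{-1}(G(s)/2)$.
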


\begin{proof}
The argument is a variation on the proof of \Cref{pointwisefortransport}, and we keep the same notation.

Choose \(r_0 \in (0,a)\), where $a$ is as in \eqref{definitionG}. In particular, for \(0<s<r_0\), one has
\[
|G(s)|=-G(s).
\]
Fix \(0<r<r_0\), \(0<t<T\), \(0<|h|<r\), and \(R>0\).

Let us rewrite \eqref{estimateY} in the form
\[
G(|\boldsymbol{Y}(t,x+h)-\boldsymbol{Y}(t,x)|)
\leq
G(|h|)+w(t,x+h)+w(t,x).
\]
Fix \(0<\theta<1\), and define the corresponding good set
\[
A=A_{t,h,\theta}
\coloneqq 
\bigl\{x\in B_R:\ w(t,x+h)+w(t,x)\leq \theta |G(|h|)|\bigr\}.
\]
Since \(G(|h|)<0\), for \(x\in A\) we obtain
\[
G(|\boldsymbol{Y}(t,x+h)-\boldsymbol{Y}(t,x)|)
\leq
(1-\theta)G(|h|).
\]
Hence
\[
|\boldsymbol{Y}(t,x+h)-\boldsymbol{Y}(t,x)|
\leq
G^{-1}\bigl((1-\theta)G(|h|)\bigr)
=:\rho.
\]

Accordingly, we decompose
\begin{equation}\begin{split}
    &\int_{B_R}|u(t,x+h)-u(t,x)|\,\dd x
    \\&=
    \int_A |u(t,x+h)-u(t,x)|\,\dd x
    +
    \int_{B_R\setminus A}|u(t,x+h)-u(t,x)|\,\dd x\\
    &\eqqcolon\textnormal{(I)}+\textnormal{(II)}.\end{split}
    \label{estIeII}
\end{equation}

We begin with \(\textnormal{(II)}\). By Chebyschev's inequality and the \(L^\infty\) bound for solutions,
\begin{align}
    \textnormal{(II)}
    &\leq
    2\|u_0\|_{L^\infty}\,\mathcal L^d(B_R\setminus A)
    \nonumber\\
    &\leq
    \frac{2\|u_0\|_{L^\infty}}{\theta |G(|h|)|}
    \int_{B_R}\bigl(w(t,x+h)+w(t,x)\bigr)\,\dd x
    \nonumber\\
    &\leq
    \frac{4\|u_0\|_{L^\infty}}{\theta |G(|h|)|}
    \int_{B_{R+r}} w(t,x)\,\dd x.
    \label{IIest}
\end{align}

We now turn to \(\textnormal{(I)}\). We claim that
\begin{equation}
    \textnormal{(I)}\leq C_dL\rho\,|Du_0|(\mathbb R^d).
    \label{estI}
\end{equation}
Assuming this for the moment, \eqref{estIeII} and \eqref{IIest} yield
\begin{align*}
    |G(|h|)|
    \int_{B_R}|u(t,x+h)-u(t,x)|\,\dd x
    &\leq
    C_dL\,|Du_0|(\mathbb R^d)\,
    |G(|h|)|\,G^{-1}\bigl((1-\theta)G(|h|)\bigr)
    \\
    &\qquad
    +\frac{4\|u_0\|_{L^\infty}}{\theta}
    \int_{B_{R+r}} w(t,x)\,\dd x.
\end{align*}
Now define
\[
K_{d,g}(r)\coloneqq 
\sup_{0<s<r}
\bigl\{
|G(s)|\,G^{-1}\bigl((1-\theta)G(s)\bigr)
\bigr\}.
\]
Then \(K_{d,g}(r)\to0\) as \(r\to0_+\). Indeed, by setting \(\tau=(1-\theta)G(s)\to-\infty\) and \(\sigma=G^{-1}(\tau)\to0_+\), one finds
\[
\lim_{s\to0_+}|G(s)|\,G^{-1}\bigl((1-\theta)G(s)\bigr)
=
\frac{1}{1-\theta}\lim_{\sigma\to0_+}\sigma |G(\sigma)|
=
0,
\]
the last identity following from L'Hôpital's rule and the fact that \(\sigma/g(\sigma)\to0\) as \(\sigma\to0_+\). This gives \eqref{estpontual}.

It therefore remains to prove \eqref{estI}. We begin with the special case
\begin{equation}
    u_0=1_E
    \qquad \text{for some set } E\subset\mathbb R^d \text{ of finite perimeter.}
    \label{perimeter}
\end{equation}

Let \(\{Q\}\) be a disjoint family of open cubes of side length \(\rho\) covering \(\mathcal L^d\)-almost all of \(\mathbb R^d\). Since \Cref{transport} is linear,
\[
u(t,x)=1_E(\boldsymbol{Y}(t,x))
=\sum_Q 1_{E\cap Q}(\boldsymbol{Y}(t,x)).
\]
Let \(Q^\star\) denote the cube concentric with \(Q\) and with side length \(3\rho\). Because \(x\in A\) implies
\[
|\boldsymbol{Y}(t,x+h)-\boldsymbol{Y}(t,x)|\leq \rho,
\]
it follows that if \(\boldsymbol{Y}(t,x)\in Q\), then \(\boldsymbol{Y}(t,x+h)\in Q^\star\).

Using this observation, we obtain for almost every \(x\in A\),
\begin{align*}
    |1_E(\boldsymbol{Y}(t,x+h))-1_E(\boldsymbol{Y}(t,x))|
    &\leq
    \sum_Q
    \Big(
    1_{E\cap Q^\star}(\boldsymbol{Y}(t,x))
    1_{Q^\star\setminus E}(\boldsymbol{Y}(t,x+h))
    \\
    &\hspace{3.8em}
    +
    1_{E\cap Q^\star}(\boldsymbol{Y}(t,x+h))
    1_{Q^\star\setminus E}(\boldsymbol{Y}(t,x))
    \Big).
\end{align*}
By the monotone convergence theorem,
\begin{align*}
    \int_A |1_E(\boldsymbol{Y}(t,x+h))-1_E(\boldsymbol{Y}(t,x))|\,\dd x
    &\leq
    \sum_Q \int_A
    \Big(
    1_{E\cap Q^\star}(\boldsymbol{Y}(t,x))
    1_{Q^\star\setminus E}(\boldsymbol{Y}(t,x+h))
    \\
    &\hspace{6em}
    +
    1_{E\cap Q^\star}(\boldsymbol{Y}(t,x+h))
    1_{Q^\star\setminus E}(\boldsymbol{Y}(t,x))
    \Big)\,\dd x.
\end{align*}
Using the compressibility constant \eqref{defL}, together with the elementary bound
\[
ab\leq \min\{a,b\}
\qquad \text{for } a,b\in\{0,1\},
\]
we infer that
\begin{equation}
    \int_A |1_E(\boldsymbol{Y}(t,x+h))-1_E(\boldsymbol{Y}(t,x))|\,\dd x
    \leq
    2L\sum_Q \min\{\mathcal L^d(E\cap Q^\star),\mathcal L^d(Q^\star\setminus E)\}.
    \label{Iboundpre}
\end{equation}

We now invoke the relative isoperimetric inequality on \(Q^\star\); see \cite[Section 3.4]{ambrosio2000functions}. It yields
\[
\min\{\mathcal L^d(E\cap Q^\star),\mathcal L^d(Q^\star\setminus E)\}^{(d-1)/d}
\leq
C_d P(E,Q^\star),
\]
where \(P(E,Q^\star)\) denotes the perimeter of \(E\) in \(Q^\star\). Since also
\[
\min\{\mathcal L^d(E\cap Q^\star),\mathcal L^d(Q^\star\setminus E)\}^{1/d}
\leq
\mathcal L^d(Q^\star)^{1/d}
\leq
C_d\rho,
\]
we conclude that
\[
\min\{\mathcal L^d(E\cap Q^\star),\mathcal L^d(Q^\star\setminus E)\}
\leq
C_d\, \rho\, P(E,Q^\star).
\]
Returning to \eqref{Iboundpre}, and using that the family \(\{Q^\star\}\) has bounded overlap, together with the measure-theoretic properties of \(Q^\star\mapsto P(E,Q^\star)\) (see \cite[Proposition 3.38]{ambrosio2000functions}), we obtain
\begin{equation}
    \int_A |1_E(\boldsymbol{Y}(t,x+h))-1_E(\boldsymbol{Y}(t,x))|\,\dd x
    \leq
    C_dL\rho\,P(E, \mathbb R^d).
    \label{Iboundpre2}
\end{equation}
Because \(P(E, \mathbb R^d)=|D1_E|(\mathbb R^d)\), this proves \eqref{estI} under the assumption \eqref{perimeter}.

Finally, we derive \eqref{estI} for a general \(u_0\in \bv(\mathbb R^d)\) by means of the coarea formula. Using the kinetic representation
\[
|u_0(\boldsymbol{Y}(t,x+h))-u_0(\boldsymbol{Y}(t,x))|
=
\int_{-\infty}^{\infty}
\big|
1_{\{u_0>\lambda\}}(\boldsymbol{Y}(t,x+h))
-
1_{\{u_0>\lambda\}}(\boldsymbol{Y}(t,x))
\big|
\,\dd\lambda,
\]
we obtain, by Fubini--Tonelli, \eqref{Iboundpre2}, and the coarea formula,
\begin{align*}
    \int_A |u_0(\boldsymbol{Y}(t,x+h))-u_0(\boldsymbol{Y}(t,x))|\,\dd x
    &=
    \int_{-\infty}^{\infty}
    \int_A
    \big|
    1_{\{u_0>\lambda\}}(\boldsymbol{Y}(t,x+h))
    -
    1_{\{u_0>\lambda\}}(\boldsymbol{Y}(t,x))
    \big|
    \,\dd x\,\dd\lambda
    \\
    &\leq
    C_dL\rho
    \int_{-\infty}^{\infty}
    P(\{x\in\mathbb R^d:\ u_0(x)>\lambda\}, \mathbb R^d)\,\dd\lambda
    \\
    &=
    C_dL\rho\,|Du_0|(\mathbb R^d).
\end{align*}
This is exactly \eqref{estI}, and the proof is complete.
\end{proof}

\section{An application to mixing bounds} \label{SecMix}

The Bressan mixing conjecture \cite{Bressan1,Bressan2} is a long-standing open problem that may be phrased informally as follows: how fast, or how efficiently, can an incompressible velocity field mix two regions?

Let us state a version of the conjecture that can be naturally analyzed using the techniques developed in this paper (Bressan's original conjecture is formulated for periodic velocity fields. Adapting our results to that setting would be straightforward). Let \(U\subset\mathbb R^d\) be a bounded open set with sufficiently smooth boundary, and let
\(
U=\Omega\cup (U\setminus\Omega)
\)
be a partition such that \(\Omega\) has finite perimeter and
\(
\mathcal L^d(\Omega)=\frac12\mathcal L^d(U)>0.
\)
Let also \(\boldsymbol b:[0,T]\times\mathbb R^d\to\mathbb R^d\) be a sufficiently smooth vector field with
\begin{equation}
\supp \boldsymbol b \subset [0,T]\times \overline U,
\label{support}
\end{equation}
and denote by \(\boldsymbol X(t,0,x)\) the flow associated with \eqref{flow}. One says that the flow \(\boldsymbol X(T,0,\cdot)\) mixes \(\Omega\) and \(U\setminus\Omega\) up to scale \(\epsilon>0\) if, for every \(x\in U\), at least one third of the points in \(B_\epsilon(x)\cap U\) come from \(\Omega\), and at least one third come from \(U\setminus\Omega\). Quantitatively, this means that
\begin{equation}
\frac13
\leq
\frac{\mathcal L^d\bigl(B_\epsilon(x)\cap \boldsymbol X(T,0,\Omega)\bigr)}
{\mathcal L^d(B_\epsilon(x)\cap U)}
\leq
\frac23
\qquad \text{for every } x\in U.
\label{mixing}
\end{equation}

In this formulation, Bressan's conjecture asserts that there exists a constant \(\beta=\beta(\Omega,U)>0\) such that, for every \(T>0\), every sufficiently small \(\epsilon>0\), and every sufficiently smooth divergence-free vector field \(\boldsymbol b:[0,T]\times\mathbb R^d\to\mathbb R^d\) satisfying \eqref{support}, if \(\boldsymbol X(T,0,\cdot)\) mixes \(\Omega\) and \(U\setminus\Omega\) up to scale \(\epsilon\), then
\[
\int_0^T\int_{\mathbb R^d}|D\boldsymbol b(t,x)|\,\dd x\,\dd t
\geq
\beta |\log \epsilon|.
\]

This problem has been studied intensively over the last two decades. Crippa and De Lellis \cite{crippadelellis} proved the corresponding lower bound in the supercritical Sobolev regime, namely
\[
\int_0^T \left(\int_{\mathbb R^d}|D\boldsymbol b(t,x)|^p\,\dd x\right)^{1/p}\dd t
\geq
\beta_p |\log \epsilon|,
\]
for every \(p>1\). Later, Had\v zi\'c, Seeger, Smart, and Street \cite{street} developed a Hardy-space approach to the problem. Several other approaches have also been proposed; see \cite{seis,kiselev,leger,bruenguyen,cooperman,huysmans0,huysmans,bruecolombo} and the references therein. On the constructive side, remarkable examples of efficient incompressible mixing flows have been obtained in \cite{mazzucato0,mazzucato1,elgindi1,elgindi2}.

Since Bressan's conjecture is formulated in terms of the \(L^1((0,T);W^{1,1}(\mathbb R^d))\)-norm of the velocity field, it is natural to revisit the problem from a genuinely endpoint \(L^1\) perspective. This point of view was recently pursued by Cooperman \cite{cooperman} for a special class of two-dimensional velocity fields, and by Huysmans and Said \cite{huysmans} for autonomous divergence-free \(\bv\) vector fields. All the remaining works mentioned above assume regularity slightly stronger than \eqref{regularity}, even if qualitatively, as in the recent proof by Bru\`e, Colombo, and Johansson \cite{bruecolombo} of an asymptotic version of Bressan's conjecture.

We now show that, assuming only the DiPerna--Lions regularity hypothesis \eqref{regularity2},
one can easily derive lower bounds on the admissible mixing scale \(\epsilon\) by means of the regularity results established in the previous section.

\begin{theorem}\label{mixingthm}
Let \(U\subset\mathbb R^d\) be a bounded open set of finite perimeter, and let
\(
U=\Omega\cup (U\setminus\Omega)
\)
be a partition such that \(\Omega\) has finite perimeter and
\(
\mathcal L^d(\Omega)=\frac12\mathcal L^d(U)>0.
\)
Let \(\boldsymbol b\) be divergence-free and satisfy \eqref{regularity2} and \eqref{support}. Let \(\Phi_0\) and $\Phi$ be as in \Cref{regularityforPhi}. As usual, set
\(
\Psi(s)=s^{-1}\Phi(s).
\)

Then there exist \(\epsilon_0=\epsilon_0(\Phi,U,\Omega,d)>0\) and \(C=C(\Phi,\Omega,d)>0\) with the following property: if the regular Lagrangian flow \(\boldsymbol X(T,0,\cdot)\) of \(\boldsymbol b\) mixes \(\Omega\) and \(U\setminus\Omega\) up to a scale \(\epsilon\in(0,\epsilon_0)\), then
\begin{equation}
    \Psi(\epsilon^{-d})
\leq
C\left(
\int_0^T\Vert D\boldsymbol{b} (t,\cdot)\Vert_{L^1(\mathbb R^d)}\,\dd\tau + \int_0^T \Vert D\boldsymbol{b} (t,\cdot)\Vert_{L^{\Phi_0}(\mathbb R^d)}\,\dd\tau
\right). \label{mixingineq}
\end{equation}
\end{theorem}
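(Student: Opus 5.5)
We will deduce \eqref{mixingineq} from the first-order decay estimate of \Cref{weakbound1}, applied to the transported scalar \(u(t,x)=1_\Omega(\boldsymbol Y(t,x))\), whose initial datum \(u_0=1_\Omega\in (L^\infty\cap\bv)(\mathbb R^d)\). Since \(\boldsymbol b\) is divergence free, \(L=1\) in \eqref{defL}, and \(\|u_0\|_{L^\infty}=1\), \(|Du_0|(\mathbb R^d)=P(\Omega,\mathbb R^d)\). The mixing condition \eqref{mixing} at time \(T\) says that \(u(T,\cdot)=1_{\boldsymbol X(T,0,\Omega)}\) has density between \(1/3\) and \(2/3\) in every ball \(B_\epsilon(x)\cap U\). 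The plan is to show that this forces a \emph{lower} bound on translation differences at scale \(\epsilon\), namely
\[
\frac{1}{\mathcal L^d(B_{\epsilon})}\int_{B_{\epsilon}}\int_{B_R}|u(T,x+h)-u(T,x)|\,\dd x\,\dd h\geq c(U,d)>0,
\]
with \(B_R\supset U+B_1\). This is elementary: for each \(x\in U\) with \(u(T,x)=1\), at least a third of \(h\in B_\epsilon\) with \(x+h\in U\) give \(u(T,x+h)=0\), and similarly when \(u(T,x)=0\). Since \(U\) is open and bounded with finite perimeter, the proportion \(\mathcal L^d(B_\epsilon(x)\cap U)/\mathcal L^d(B_\epsilon)\) is bounded below on a subset of \(U\) of measure at least \(\mathcal L^d(U)/2\), for \(\epsilon<\epsilon_0(U)\). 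Averaging then gives the claim with \(c\approx\mathcal L^d(U)/12\).

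Next we average \eqref{estpontual} over \(h\in B_\epsilon\), taking \(r=\epsilon\) and \(\epsilon<r_0\). Since \(|G|\) is decreasing near \(0\) (because \(G\) is increasing and negative there), the bound \(|G(|h|)|\geq|G(\epsilon)|\) holds for \(|h|<\epsilon\). Hence
\[
c\,|G(\epsilon)|\leq K_{d,g}(\epsilon)P(\Omega,\mathbb R^d)+C_{d,R,\Phi}\Big(\int_0^T\|D\boldsymbol b\|_{L^1}+\int_0^T\|D\boldsymbol b\|_{L^{\Phi_0}}\Big).
\]
Because \(K_{d,g}(\epsilon)=o(1)\) while \(|G(\epsilon)|\to\infty\), we may shrink \(\epsilon_0\) (depending on \(\Phi,\Omega,d\)) so that the first term is absorbed into, say, \(\tfrac c2|G(\epsilon)|\).

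Finally, we identify \(|G(\epsilon)|\) with \(\Psi(\epsilon^{-d})\). By \eqref{definitionG} and \Cref{onPhiandPhi0}, \(G(z)=-\frac1d\Psi(z^{-d})+C_{d,a}\) for small \(z\). Hence \(|G(\epsilon)|\geq\frac1{2d}\Psi(\epsilon^{-d})\) once \(\epsilon_0\) is small enough that \(\Psi(\epsilon^{-d})\geq 2d|C_{d,a}|\), which is possible since \(\Psi\to\infty\). Combining the three steps yields \eqref{mixingineq}.

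The main obstacle is the first step, namely turning the pointwise mixing condition into a quantitative lower bound on \(\int|u(T,x+h)-u(T,x)|\) uniformly in small \(\epsilon\). Near \(\partial U\) the balls \(B_\epsilon(x)\cap U\) may be thin, so one must restrict to points at distance \(\gtrsim\epsilon\) from \(\partial U\). Their measure tends to \(\mathcal L^d(U)\) as \(\epsilon\to0\), which is what fixes \(\epsilon_0\) in terms of \(U\). A secondary check is that the constants in \Cref{weakbound1} depend only on \(\Phi,d,R\), with \(R\) fixed by \(U\), so that \(C\) depends only on \(\Phi,\Omega,d\) as claimed.
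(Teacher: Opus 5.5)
Your proposal is correct and is essentially the paper's argument. The mixing condition gives a lower bound on the $h$-averaged translation differences at scale $\epsilon$. You then apply \Cref{weakbound1} together with $|G(|h|)|\geq|G(\epsilon)|$ for $|h|<\epsilon$, absorb the $o(1)$ perimeter term, and use the identity $|G(\epsilon)|=\frac1d\Psi(\epsilon^{-d})-\mathrm{const}$.

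The only difference is cosmetic. The paper uses $u_0=1_\Omega-1_{U\setminus\Omega}$ and integrates only over $\{u(T,\cdot)=1\}$, which sidesteps the boundary-layer issue you single out as the main obstacle. The same trick works with your $u_0=1_\Omega$: since $u(T,\cdot)$ vanishes outside $U$, for $u(T,x)=1$ at least a third of all of $B_\epsilon(x)$, not just of $B_\epsilon(x)\cap U$, carries the value $0$.
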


\begin{remark}
\textnormal{Some remarks are in order.}
\begin{itemize}
\item \textnormal{Although the statement of \Cref{mixingthm} is technical, it may be more easily understood in conjunction with \Cref{onPhiandPhi0}. Indeed, assume that $D \boldsymbol{b} \in L^1((0,T); L^{\Phi_0}(\mathbb R^d))$ where $\Psi_0(z) \coloneqq  z^{-1}\Phi_0(z)$ is unbounded and eventually $C^1$, with $z\Psi_0'(z)$ being a eventually positive, eventually nonincreasing and normalized slowly varying function at $\infty$. In this case, we may take $\Psi(z) = \Psi_0(z)$, and, diminishing $\epsilon_0$ if necessary, \Cref{mixingineq} can be put as
$$    \Psi_0(\epsilon^{-d})
\leq
C\left(
\int_0^T\Vert D\boldsymbol{b} (t,\cdot)\Vert_{L^1(\mathbb R^d)}\,\dd\tau + \int_0^T \Vert D\boldsymbol{b} (t,\cdot)\Vert_{L^{z\Psi_0(z)}(\mathbb R^d)}\,\dd\tau
\right).$$}

\textnormal{From this, we may employ the examples of \Cref{exampleprop} and derive several new mixing bounds, as for instance,
\begin{align*}
 \log (\epsilon^{-d})^b &\leq C\left(
\int_0^T\Vert D\boldsymbol{b} (t,\cdot)\Vert_{L^1(\mathbb R^d)}\,\dd\tau + \int_0^T \Vert D\boldsymbol{b} (t,\cdot)\Vert_{L \log^b L(\mathbb R^d)}\,\dd\tau \right) \text{ for $b \in (0,1]$}, \\
 \frac{\log (\epsilon^{-d})}{\log^{(n)}(\epsilon^{-d})} &\leq C\left(
\int_0^T\Vert D\boldsymbol{b} (t,\cdot)\Vert_{L^1(\mathbb R^d)}\,\dd\tau + \int_0^T \Vert D\boldsymbol{b} (t,\cdot)\Vert_{L \frac{\log}{\log^{(n)}} L(\mathbb R^d)}\,\dd\tau \right) \text{ for $n\geq2$}, \\
    \log^{(n)}(\epsilon^{-d})
&\leq
C\left(
\int_0^T\Vert D\boldsymbol{b} (t,\cdot)\Vert_{L^1(\mathbb R^d)}\,\dd\tau + \int_0^T \Vert D\boldsymbol{b} (t,\cdot)\Vert_{L \log^{(n)} L(\mathbb R^d)}\,\dd\tau \right) \text{ for $n\geq2$},
\end{align*}
among others. Notice that taking $\Phi_0(z) = z \log_+ z$ (which is the case $b=1$ above), we recover the classical logarithmic lower bound of Crippa and De Lellis \cite{crippadelellis}}.

\item \textnormal{The Hardy-space approach of Had\v zi\'c, Seeger, Smart, and Street \cite{street} suggests that it may be fruitful to investigate weighted grand maximal functions and corresponding weighted Hardy spaces in the spirit of the present paper; see also \cite{bouchutcrippa}.}

\item \textnormal{At present, we do not know whether the bound obtained here is sharp; see the next section.}
\end{itemize}
\end{remark}

\begin{proof}
The argument is based on the following classical observation. Define
\[
u_0(x)=1_\Omega(x)-1_{U\setminus\Omega}(x)\in \bv(\mathbb R^d),
\]
and let \(u(t,x)\) be the corresponding solution to the transport equation \eqref{transport} with initial datum \(u(0,x)=u_0(x)\). Since \(u_0\) vanishes outside \(U\), and \(\boldsymbol b(t,\cdot)\) is supported in \(\overline U\), the transported scalar \(u(t,\cdot)\) also vanishes almost everywhere outside \(U\). Moreover, \eqref{mixing} immediately implies that
\begin{equation}
\left|
\frac{1}{\mathcal L^d(B_\epsilon(x))}
\int_{B_\epsilon(x)} u(T,y)\,\dd y
\right|
\leq \frac13
\qquad \text{for every } x\in U.
\label{boundmixing}
\end{equation}
We shall combine \eqref{boundmixing} with the regularity estimates of the previous section in order to derive lower bounds on \(\epsilon\).

Let \(r_0>0\) be the same as in \Cref{weakbound1}, and recall that
\[
G(r)=\frac{1}{d}\bigl(\Psi(r^{-d})-\Psi(a^{-d})\bigr)
\]
for some \(a>0\). We may assume that $r_0 < a$. Assume also that \(0<\epsilon<r_0\). By \eqref{boundmixing}, for every \(x\in\{z:u(T,z)=1\}\) one has
\[
\frac23
\leq
\frac{1}{\mathcal L^d(B_\epsilon)}
\int_{B_\epsilon}
|u(T,x+h)-u(T,x)|\,\dd h.
\]
Integrating this inequality over \(x\in\{z:u(T,z)=1\}\), and recalling that
\(
\mathcal L^d(\{z:u(T,z)=1\})=\mathcal L^d(\Omega),
\)
we obtain
\begin{align*}
\frac23\mathcal L^d(\Omega)
&\leq
\frac{1}{\mathcal L^d(B_\epsilon)}
\int_{\{u(T,\cdot)=1\}}\int_{B_\epsilon}
|u(T,x+h)-u(T,x)|\,\dd h\,\dd x\\
&\leq
\frac{1}{\mathcal L^d(B_\epsilon)}
\int_{\mathbb R^d}\int_{B_\epsilon}
|u(T,x+h)-u(T,x)|\,\dd h\,\dd x\\
&\leq
\left(
\sup_{0<|h|<\epsilon}
\left\{
|G(|h|)|
\int_{\mathbb R^d}|u(T,x+h)-u(T,x)|\,\dd x
\right\}
\right)
\frac{1}{\mathcal L^d(B_\epsilon)}
\int_{B_\epsilon}\frac{\dd h}{|G(|h|)|}.
\end{align*}
Since \(|G(r)|\) is nondecreasing for \(r\in (0,r_0)\), it follows that
\[
\frac{1}{\mathcal L^d(B_\epsilon)}
\int_{B_\epsilon}\frac{\dd h}{|G(|h|)|}
\leq
\frac{1}{|G(\epsilon)|}.
\]
Therefore,
\[
\frac23\mathcal L^d(\Omega)\,|G(\epsilon)|
\leq
\sup_{0<|h|<\epsilon}
\left\{
|G(|h|)|
\int_{\mathbb R^d}|u(T,x+h)-u(T,x)|\,\dd x
\right\}.
\]

Applying \Cref{weakbound1}, we deduce that
\begin{align*}
|G(\epsilon)|
\leq
C(\Omega,d)\bigg(
o(1)\big(P(\Omega)+P(U\setminus\Omega)\big)
&+
\int_0^T \Vert D\boldsymbol{b}(t,\cdot) \Vert_{L^1(\mathbb R^d)}\,\dd\tau + \int_0^T \Vert D\boldsymbol{b} (t,\cdot)\Vert_{L^{\Phi_0}(\mathbb R^d)}\,\dd\tau \bigg)
\end{align*}
as \(\epsilon\to0_+\). Since
\[
|G(\epsilon)|=\frac{1}{d}\bigl(\Psi(\epsilon^{-d})-\Psi(a^{-d})\bigr),
\]
the conclusion follows by choosing \(0<\epsilon_0<r_0\) so small that the \(o(1)\)-term, and the quantity \(\Psi(a^{-d})\) can be absorbed into the left-hand side.
\end{proof}

We close this section with a closely related proposition concerning functional mixing, more specifically the decay of \(\|u(t,\cdot)\|_{\dot H^{-1}(\mathbb R^d)}\), where \(u(t,x)\) is a solution of \eqref{transport}. For this, we shall need the following lemma, which explains why the Gagliardo seminorm introduced in \Cref{bressan} gives rise to a logarithmic Sobolev-type space.

\begin{lemma}\label{logsobolev}
Assume that \(d\geq 2\). Let \(g\) be given by \eqref{definitiong} for some $\Phi$ satisfying conditions (1)--(3) of \Cref{regularityforPhi}. Let also \(f\in L^2(\mathbb R^d)\) be such that, for some \(\delta>0\),
\begin{equation}
\int_{B_\delta}\int_{\mathbb R^d}\frac{|f(x+h)-f(x)|^2}{|h|^d g(|h|)}\,\dd x\,\dd h<\infty,
\label{energy}
\end{equation}
where $\Psi(s) = s^{-1} \Phi(s)$ and $g(r)$ is as in \eqref{definitiong}.

As usual, we denote by
\[
\widehat F(\xi)=(2\pi)^{-d/2}\int_{\mathbb R^d}F(x)e^{-ix\cdot\xi}\,\dd x
\]
the Fourier transform on \(\mathbb R^d\). Then, there exists a constant \(C_{d,g}>0\) such that
\begin{equation}
\begin{dcases}
\int_{\mathbb R^d}\Psi(|\xi|^d)\,|\widehat f(\xi)|^2\,\dd \xi
\leq
C_{d,g}\left(
\int_{B_\delta}\int_{\mathbb R^d}\frac{|f(x+h)-f(x)|^2}{|h|^d g(|h|)}\,\dd x\,\dd h
+
\|f\|_{L^2(\mathbb R^d)}^2
\right),
\\
\int_{B_\delta}\int_{\mathbb R^d}\frac{|f(x+h)-f(x)|^2}{|h|^d g(|h|)}\,\dd x\,\dd h
\leq
C_{d,g}\left(
\int_{\mathbb R^d}\Psi(|\xi|^d)\,|\widehat f(\xi)|^2\,\dd \xi
+
\|f\|_{L^2(\mathbb R^d)}^2
\right).
\end{dcases}
\label{preprebound}
\end{equation}
\end{lemma}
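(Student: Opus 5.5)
Both inequalities in \eqref{preprebound} will come from computing the Gagliardo-type energy on the Fourier side. By Plancherel, for each fixed $h$,
\[
\int_{\mathbb R^d}|f(x+h)-f(x)|^2\,\dd x=\int_{\mathbb R^d}|e^{ih\cdot\xi}-1|^2|\widehat f(\xi)|^2\,\dd\xi .
\]
By Tonelli, the energy in \eqref{energy} therefore equals $\int_{\mathbb R^d} m(\xi)|\widehat f(\xi)|^2\,\dd\xi$ with multiplier
\[
m(\xi)=\int_{B_\delta}\frac{|e^{ih\cdot\xi}-1|^2}{|h|^dg(|h|)}\,\dd h
=\int_0^\delta\frac{\dd r}{r\,g(r)}\int_{\mathbb S^{d-1}}\bigl|e^{ir\omega\cdot\xi}-1\bigr|^2\,\mathcal H^{d-1}(\dd\omega).
\]
Call the spherical average $\sigma(r|\xi|)$; it depends only on $s=r|\xi|$ by rotation invariance. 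The proof then reduces to the two-sided comparison
\[
c\,\Psi(|\xi|^d)-C\le m(\xi)\le C\,\Psi(|\xi|^d)+C,
\]
with constants depending only on $d,g,\delta$. The additive constants are absorbed by $\|f\|_{L^2}^2$.

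To prove the comparison, I will use two properties of the spherical average:
\[
\sigma(s)\simeq \min\{s^2,1\},\qquad \sigma(s)\le C s^2,\qquad \sigma(s)\le 4\,\mathcal H^{d-1}(\mathbb S^{d-1}),\qquad \sigma(s)\ge c \ \text{ for } s\ge 1 .
\]
The lower bound for $s\ge1$ holds because $|e^{i\theta}-1|^2=2-2\cos\theta$, and the spherical average of $\cos(s\omega_1)$ is a Bessel-type function bounded away from $1$ for $s\ge1$. For $|\xi|\le 1/\delta$ the multiplier is bounded, since $\sigma\le C r^2|\xi|^2$ and $g$ is bounded below; this case goes into the constant. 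For $|\xi|>1/\delta$ I split the $r$-integral at $r=1/|\xi|$.
\begin{itemize}
\item On $(1/|\xi|,\delta)$ we have $\sigma\simeq 1$, so this part is comparable to $G(\delta)-G(1/|\xi|)$, with $G$ as in \eqref{definitionG}.
\item On $(0,1/|\xi|)$ we have $\sigma\lesssim r^2|\xi|^2$, so this part is at most $|\xi|^2\int_0^{1/|\xi|}r\,\dd r/g(r)$. Since $g$ is bounded below this is at most a constant, and the upper bound needs nothing more; the lower bound simply discards this nonnegative piece.
\end{itemize}
Finally, \eqref{definitionG} gives $G(z)=-\tfrac1d\Psi(z^{-d})+C_{d,a}$, i.e. $-G(1/|\xi|)=\tfrac1d\Psi(|\xi|^d)+\text{const}$. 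This yields $m(\xi)\simeq \tfrac1d\Psi(|\xi|^d)$ up to additive constants. The dependence on $\delta$ is harmless because $\Psi$ is bounded on compact sets, and $\Psi\ge 0$ is increasing (conditions (1)--(2) of \Cref{regularityforPhi}).

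The main obstacle is the uniform lower bound $\sigma(s)\ge c>0$ for $s\ge 1$. It amounts to showing that $\widehat{\mathcal H^{d-1}\llcorner\mathbb S^{d-1}}$ stays strictly below its value at the origin away from zero. I would argue as follows: this Fourier transform is continuous, it equals the full mass only at $s=0$ (strict inequality $\cos<1$ on a set of positive measure), and it decays like $s^{-(d-1)/2}$ as $s\to\infty$ (this is where $d\ge2$ is used). Compactness then gives $c$ on $[1,\infty)$. Everything else is routine bookkeeping of constants, using the normalization of $g$ to identify $\int \dd r/(rg)$ with $G$ exactly.
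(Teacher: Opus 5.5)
Your proposal is correct and follows essentially the same route as the paper's proof. It uses Plancherel to reduce to the radial multiplier $\int_0^\delta \sigma(r|\xi|)\,\dd r/(r g(r))$, the bound $\sigma(s)\simeq\min\{1,s^2\}$ (where $d\ge2$ enters), a split at $r=1/|\xi|$, and the identification $\int_{1/|\xi|}^\delta \dd r/(rg(r))=G(\delta)-G(1/|\xi|)=\frac1d\bigl(\Psi(|\xi|^d)-\Psi(\delta^{-d})\bigr)$. The minor differences are that you bound the small-$r$ piece using only $\inf g>0$ where the paper invokes Karamata's theorem (boundedness is all that is needed), and that you justify $\sigma(s)\ge c$ for $s\ge1$ through continuity and decay of the Fourier transform of surface measure, which the paper simply asserts.
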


\begin{proof}
The proof is based on Plancherel's theorem. Since
\[
\int_{\mathbb R^d}|f(x+h)-f(x)|^2\,\dd x
=
\int_{\mathbb R^d}|e^{i\xi\cdot h}-1|^2\,|\widehat f(\xi)|^2\,\dd \xi,
\]
we may rewrite the integral in \eqref{energy} as
\begin{equation}
\int_{\mathbb R^d} m_g(\xi)\,|\widehat f(\xi)|^2\,\dd \xi,
\label{energy2}
\end{equation}
where
\[
m_g(\xi)=2\int_{B_\delta}\frac{1-\cos(\xi\cdot h)}{g(|h|)\,|h|^d}\,\dd h.
\]
Passing to polar coordinates \(h=re\), with \(r>0\) and \(e\in \mathbb S^{d-1}\), we obtain
\[
m_g(\xi)=2\int_0^\delta \frac{1}{r g(r)} \left(\int_{\mathbb S^{d-1}}(1-\cos(r\xi\cdot e))\,\dd e\right)\dd r.
\]
Writing \(\xi=|\xi|\omega\), we may therefore express \(m_g\) as
\[
m_g(\xi)=2\int_0^\delta \frac{A(r|\xi|)}{r g(r)}\,\dd r,
\]
where
\[
A(s)=\int_{\mathbb S^{d-1}}(1-\cos(s\,\omega\cdot e))\,\dd e,
\]
and \(A\) does not depend on the particular choice of \(e\in \mathbb S^{d-1}\). Since \(d\geq2\), the function \(A\) satisfies
\[
c_d^{-1}\min\{1,s^2\}\leq A(s)\leq c_d\min\{1,s^2\}
\qquad \text{for all } s\geq0
\]
for some constant \(c_d>0\).

Hence, for \(|\xi|>\delta^{-1}\), we obtain
\[
c_d^{-1}\left(
|\xi|^2\int_0^{1/|\xi|}\frac{r}{g(r)}\,\dd r
+
\int_{1/|\xi|}^{\delta}\frac{\dd r}{r g(r)}
\right)
\leq
m_g(\xi)
\leq
c_d\left(
|\xi|^2\int_0^{1/|\xi|}\frac{r}{g(r)}\,\dd r
+
\int_{1/|\xi|}^{\delta}\frac{\dd r}{r g(r)}
\right).
\]
We now estimate the two terms on the right-hand side. By Karamata's theorem,
\[
|\xi|^2\int_0^{1/|\xi|}\frac{r}{g(r)}\,\dd r
\sim
\frac{1}{2\,g(1/|\xi|)}
\qquad \text{as } |\xi|\to\infty.
\]
On the other hand, by \eqref{definitionG},
\[
\int_{1/|\xi|}^{\delta}\frac{\dd r}{r g(r)}
=
G(\delta)-G(1/|\xi|)
=
\frac{1}{d}\bigl(\Psi(|\xi|^d)-\Psi(\delta^{-d})\bigr).
\]
Since \(\Psi\) is increasing and unbounded, we may assume that
\[
\frac{1}{2d}\Psi(|\xi|^d)
\leq
G(\delta)-G(1/|\xi|)
\leq
\frac{1}{d}\Psi(|\xi|^d)
\qquad \text{for all sufficiently large } |\xi|.
\]
Consequently, there exists \(C_{d,g}>0\) such that
\[
C_{d,g}^{-1}\Psi(|\xi|^d)
\leq
m_g(\xi)
\leq
C_{d,g}\Psi(|\xi|^d)
\qquad \text{for all sufficiently large } |\xi|.
\]
Since \(\Psi\) is bounded on bounded intervals, we may increase \(C_{d,g}\) if necessary and obtain
\[
C_{d,g}^{-1}\Psi(|\xi|^d)\leq m_g(\xi)+1
\qquad\text{and}\qquad
m_g(\xi)\leq C_{d,g}\bigl(\Psi(|\xi|^d)+1\bigr)
\]
for all \(\xi\in\mathbb R^d\). Returning to \eqref{energy} and \eqref{energy2}, we conclude \eqref{preprebound}.
\end{proof}

As a consequence, we obtain the following interpolation inequality.

\begin{lemma}\label{interpolationbound}
Keep the same assumptions as in \Cref{logsobolev}, and assume moreover that \(f\neq0\).

Then there exist constants \(c>0\) and \(C_{d,g}>0\) such that
\begin{equation}
\|f\|_{L^2(\mathbb R^d)}^2\, \Psi\!\left( c\,\frac{\|f\|_{L^2(\mathbb R^d)}^d}{\|f\|_{\dot H^{-1}(\mathbb R^d)}^d} \right)
\leq
C_{d,g}\left(
\int_{B_\delta}\int_{\mathbb R^d}\frac{|f(x+h)-f(x)|^2}{|h|^d g(|h|)}\,\dd x\,\dd h
+
\|f\|_{L^2(\mathbb R^d)}^2
\right).
\label{boundbound}
\end{equation}
\end{lemma}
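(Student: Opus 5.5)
The plan is a frequency splitting argument in the spirit of the classical interpolation $\|f\|_{L^2}^2\le \|f\|_{\dot H^{-1}}\|f\|_{\dot H^{1}}$, with $|\xi|^2$ replaced by the symbol $\Psi(|\xi|^d)$ from \Cref{logsobolev}. Write $E\coloneqq\int_{\mathbb R^d}\Psi(|\xi|^d)|\widehat f(\xi)|^2\,\dd\xi$. By the first inequality in \eqref{preprebound}, $E$ is bounded by $C_{d,g}$ times the right-hand side of \eqref{boundbound}. It therefore suffices to show
\[
\|f\|_{L^2}^2\,\Psi\!\left(c\,\frac{\|f\|_{L^2}^d}{\|f\|_{\dot H^{-1}}^d}\right)\le C\bigl(E+\|f\|_{L^2}^2\bigr).
\]
If $\|f\|_{\dot H^{-1}}=\infty$, the left-hand side is $\|f\|_{L^2}^2\Psi(0)$, which is harmless because $\Psi$ is bounded on bounded sets. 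So we may assume $\|f\|_{\dot H^{-1}}<\infty$.

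Fix a threshold $\rho>0$ and split Plancherel's identity at $|\xi|=\rho$. On the low frequencies,
\[
\int_{|\xi|\le\rho}|\widehat f|^2\,\dd\xi\le\rho^2\int_{|\xi|\le\rho}|\xi|^{-2}|\widehat f|^2\,\dd\xi\le\rho^2\|f\|_{\dot H^{-1}}^2 .
\]
On the high frequencies, since $\Psi$ is nondecreasing (in fact increasing, by item (2) of \Cref{regularityforPhi}) and nonnegative,
\[
\Psi(\rho^d)\int_{|\xi|>\rho}|\widehat f|^2\,\dd\xi\le E .
\]
Now choose $\rho$ so that the low-frequency part is at most half the mass, namely $\rho^2=\|f\|_{L^2}^2/(2\|f\|_{\dot H^{-1}}^2)$. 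Then
\[
\int_{|\xi|>\rho}|\widehat f|^2\,\dd\xi\ge\tfrac12\|f\|_{L^2}^2 ,
\]
and consequently $\tfrac12\|f\|_{L^2}^2\,\Psi(\rho^d)\le E$. Since $\rho^d=2^{-d/2}\|f\|_{L^2}^d/\|f\|_{\dot H^{-1}}^d$, this is \eqref{boundbound} with $c=2^{-d/2}$. Note that $f\neq0$ guarantees $\rho>0$, which is presumably why that assumption was added.

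The only point needing care is normalization. \Cref{logsobolev} controls $E$ only up to the additive $\|f\|_{L^2}^2$ term, and the Fourier convention for $\dot H^{-1}$ must match the one used in Plancherel. Both issues only affect the constants $c$ and $C_{d,g}$, and that flexibility is exactly what the statement allows. I expect no genuine obstacle beyond checking that the monotonicity of $\Psi$ is used in the right direction, which holds because $s\Psi'(s)>0$ by \Cref{regularityforPhi}.
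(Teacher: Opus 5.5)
Your proposal is correct and is essentially the paper's argument: the paper splits at $\eta=\theta\|f\|_{L^2(\mathbb R^d)}/\|f\|_{\dot H^{-1}(\mathbb R^d)}$ for a fixed $\theta\in(0,1)$, bounds the high frequencies using the monotonicity of $\Psi$ and the first inequality in \eqref{preprebound}, and obtains $c=\theta^d$, which is your choice $\theta=2^{-1/2}$. Your separate treatment of the case $\|f\|_{\dot H^{-1}(\mathbb R^d)}=\infty$ is a harmless addition that the paper does not make.
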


\begin{proof}
This is a simple consequence of \Cref{logsobolev}. Since \(\Psi\) is increasing,
\begin{equation}
\int_{\mathbb R^d}\Psi(|\xi|^d)\,|\widehat f(\xi)|^2\,\dd \xi
\geq
\Psi(\eta^d)\int_{|\xi|>\eta}|\widehat f(\xi)|^2\,\dd \xi
\label{prebound}
\end{equation}
for any $\eta>0$. Now fix \(0<\theta<1\) and choose
\[
\eta=\theta\frac{\|f\|_{L^2(\mathbb R^d)}}{\|f\|_{\dot H^{-1}(\mathbb R^d)}}.
\]
Then
\[
\int_{|\xi|<\eta}|\widehat f(\xi)|^2\,\dd \xi
\leq
\eta^2\|f\|_{\dot H^{-1}(\mathbb R^d)}^2
\leq
\theta^2\|f\|_{L^2(\mathbb R^d)}^2,
\]
and therefore, by Plancherel,
\begin{equation}
\int_{|\xi|>\eta}|\widehat f(\xi)|^2\,\dd \xi
\geq
(1-\theta^2)\|f\|_{L^2(\mathbb R^d)}^2.
\label{prebound1}
\end{equation}
Combining \eqref{preprebound}, \eqref{prebound}, and \eqref{prebound1}, we obtain
\[
(1-\theta^2)\|f\|_{L^2(\mathbb R^d)}^2
\Psi\!\left(
\theta^d\frac{\|f\|_{L^2(\mathbb R^d)}^d}{\|f\|_{\dot H^{-1}(\mathbb R^d)}^d}
\right)
\leq
C_{d,g}\left(
\int_{B_\delta}\int_{\mathbb R^d}\frac{|f(x+h)-f(x)|^2}{|h|^d g(|h|)}\,\dd x\,\dd h
+
\|f\|_{L^2(\mathbb R^d)}^2
\right).
\]
This is exactly \eqref{boundbound}, upon setting \(c=\theta^d\) and absorbing the factor \(1-\theta^2\) into the constant.
\end{proof}

As a consequence of \Cref{bressan} and \Cref{interpolationbound}, we immediately obtain the following functional mixing estimate.

\begin{theorem} \label{mixingthm2}
Assume that $d \geq 2$. Let \(\boldsymbol b\) be divergence-free and satisfy \eqref{regularity2} and \eqref{support} for some open set $U \subset\subset \mathbb R^d$. Let \(u(t,x)\) be a solution to \eqref{transport} with initial datum satisfying \eqref{cauchy}. Finally, let \(\Phi_0\) and $\Phi$ be as in \Cref{regularityforPhi}. As usual, set
\(
\Psi(s)=s^{-1}\Phi(s).
\)

Then, there exist constants \(c>0\) and \(C_{d,\Phi, \Vert u_0 \Vert_{L^\infty(\mathbb R^d)},U}>0\) such that for every \(0<t<T\),
\begin{align*}
\|u_0\|_{L^2(\mathbb R^d)}^2\,
\Psi\!\Bigg(
c\,\frac{\|u_0\|_{L^2(\mathbb R^d)}^d}{\|u(t,\cdot)\|_{\dot H^{-1}(\mathbb R^d)}^d}
\Bigg)
&\leq
C_{d,\Phi, \Vert u_0 \Vert_{L^\infty(\mathbb R^d)}, U}\bigg(
\int_0^T\Vert D\boldsymbol{b}(t,\cdot) \Vert_{L^1(\mathbb R^d)}\,\dd\tau \nonumber \\&\quad+ \int_0^T \Vert D\boldsymbol{b}(t,\cdot) \Vert_{L^{\Phi_0}(\mathbb R^d)}\,\dd\tau
+
\|u_0\|_{\bv(\mathbb R^d)} + \|u_0\|_{L^2(\mathbb R^d)}^2
\bigg).
\end{align*}
\end{theorem}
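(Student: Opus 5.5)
The plan is to apply \Cref{interpolationbound} with $f=u(t,\cdot)$ and then bound its right-hand side using \Cref{bressan}. Since $\boldsymbol b$ is divergence-free, the flow is measure preserving, so $L=1$ in \eqref{defL}. The representation \eqref{repr} then gives $\|u(t,\cdot)\|_{L^p(\mathbb R^d)}=\|u_0\|_{L^p(\mathbb R^d)}$ for every $p$, in particular $\|u(t,\cdot)\|_{L^2}=\|u_0\|_{L^2}$ and $\|u(t,\cdot)\|_{L^\infty}=\|u_0\|_{L^\infty}$. If $u(t,\cdot)=0$ there is nothing to prove, since then $u_0=0$. Otherwise, \Cref{interpolationbound} gives the left-hand side of the claimed inequality, with $\|f\|_{L^2}$ replaced by $\|u_0\|_{L^2}$, bounded by $C_{d,g}$ times the Gagliardo-type energy $\int_{B_\delta}\int_{\mathbb R^d}|u(t,x+h)-u(t,x)|^2|h|^{-d}g(|h|)^{-1}\,\dd x\,\dd h$ plus $\|u_0\|_{L^2}^2$.

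The remaining task is to bound this energy by the right-hand side of \Cref{bressan}, and this is where the work lies. \Cref{bressan} only controls the truncated quantity $\min\{a^2,|u(t,x+h)-u(t,x)|^2\}$, and only for $x$ in a ball $B_R$. Both restrictions can be removed by choosing the parameters well.

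\emph{Truncation.} Take $a=2\|u_0\|_{L^\infty}$. Then $|u(t,x+h)-u(t,x)|\le a$, so the minimum is the full squared difference. The corresponding $\delta_a$ is any number with $G(\delta_a)<-a$, and we take $\delta=\delta_a$ in \Cref{logsobolev}. This is harmless, because the statement of \Cref{logsobolev} holds for any $\delta>0$, and changing $\delta$ only changes the energy by a term bounded by $C_\delta\|f\|_{L^2}^2$, since $g$ is bounded below.

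\emph{Localisation.} By \eqref{support} and $U\subset\subset\mathbb R^d$, every trajectory starting outside $\overline U$ is stationary, so $u(t,x)=u_0(x)$ there. For $|h|<\delta$, I would split the $x$-integral into two regions.
\begin{itemize}
\item On $B_R$, with $R$ chosen so that $U+B_{2\delta}\subset B_R$, the contribution is controlled by \Cref{bressan}.
\item On $\mathbb R^d\setminus B_R$, both $x$ and $x+h$ lie outside $\overline U$, so the difference equals $u_0(x+h)-u_0(x)$. Using $|u_0(x+h)-u_0(x)|^2\le 2\|u_0\|_{L^\infty}|u_0(x+h)-u_0(x)|$ and $\int|u_0(x+h)-u_0(x)|\,\dd x\le|h|\,|Du_0|(\mathbb R^d)$, this part is at most $C\|u_0\|_{L^\infty}\|u_0\|_{\bv}\int_0^\delta \frac{\dd r}{g(r)}$, which is finite because $g$ is bounded away from zero.
\end{itemize}

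With these choices, \Cref{bressan} gives the bound $C\big(\int_0^T\|D\boldsymbol b\|_{L^1}\,\dd\tau+\int_0^T\|D\boldsymbol b\|_{L^{\Phi_0}}\,\dd\tau+\|u_0\|_{\bv}\big)$, where $C$ depends on $d$, $\Phi$, $\|u_0\|_{L^\infty}$ and $U$ through $a$ and $R$. Since \Cref{regularityforPhi} is assumed here with $\lambda=\infty$, \Cref{bressan} applies directly. Combining the two estimates yields the theorem.

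One further point to verify is that $u(t,\cdot)\in\dot H^{-1}$ whenever the quantity in the statement is meaningful; if $\|u(t,\cdot)\|_{\dot H^{-1}}=\infty$ the argument of $\Psi$ is $0$ and the inequality is trivial. The main obstacle is the localisation step. It relies on the fact that the backward flow $\boldsymbol Y$ fixes points outside $\overline U$ almost everywhere. This should follow from uniqueness of regular Lagrangian flows, because the identity map is a regular Lagrangian flow on the region where $\boldsymbol b=0$.
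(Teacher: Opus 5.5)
Your proposal is correct and is essentially the paper's argument. The paper gives it only as a one-line remark: combine \Cref{bressan} with \Cref{interpolationbound} for $f=u(t,\cdot)$, using $L=1$ and $\|u(t,\cdot)\|_{L^2}=\|u_0\|_{L^2}$. Your two extra steps are exactly the details that remark leaves implicit, and they explain why the constant depends on $\|u_0\|_{L^\infty}$ and $U$:
\begin{itemize}
\item the choice $a=2\|u_0\|_{L^\infty}$, which is the right threshold (the paper's remark says $a\ge\|u_0\|_{L^\infty}$, which does not quite remove the truncation);
\item the localisation of the $x$-integral outside $\overline U$, which passes from $B_R$ to $\mathbb R^d$.
\end{itemize}
For the stationarity of trajectories starting outside $\overline U$ you do not need uniqueness of regular Lagrangian flows. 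For a.e.\ $x$ the trajectory is an absolutely continuous integral curve of a representative of $\boldsymbol b$ that vanishes identically off $\overline U$, and an open--closed argument in time shows that it stays at $x$.
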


\section{Concluding remarks and open questions} \label{SecFinal}

We conclude by emphasizing the main contribution of this work: the Osgood bound \eqref{estimatenew}, which is based on a new class of maximal operators and is sharp in many respects. Beyond the regularity theory of regular Lagrangian flows, this estimate also yields new information on the behavior of solutions to the transport equation \eqref{transport} and on quantitative mixing problems.

At the same time, the results obtained here point toward a natural refinement of our theory, one that would further the properties of the transport equation \eqref{transport}. This question is also closely connected with the recent paper of Bru\`e, Colombo, and Johansson \cite{bruecolombo}.

\begin{question}\label{conjecture1}
    Let \(\boldsymbol b \in L^1((0,T);W^{1,1}(\mathbb R^d))\) satisfy \eqref{growth} and \eqref{compressibility}, and let \(\boldsymbol X(t,s,x)\) denote the associated regular Lagrangian flow of \eqref{flow}. Given a ball \(B_R\subset\mathbb R^d\), does there exist a function \(K\in L^1(B_R)\) such that
    \begin{equation}
           |\boldsymbol X(t,0,x)-\boldsymbol X(t,0,y)|
    \leq
    \exp\bigl\{K(x)+K(y)\bigr\}|x-y| \label{estimatefinal}
    \end{equation}
    for all \(t\in[0,T]\) and almost every \(x,y\in B_R\)? If so, how should \(\|K\|_{L^1(B_R)}\) depend on an appropriate norm of \(D\boldsymbol b\)?
\end{question}

The bound in \eqref{estimatefinal} differs substantially from \eqref{estimate}. Indeed, it does not ask for a comparison between
\[
|\boldsymbol X(t,0,x)-\boldsymbol X(t,0,y)|
\quad \text{and} \quad
|\boldsymbol X(s,0,x)-\boldsymbol X(s,0,y)|,
\]
nor does it require the control function to be represented as an integral in time. 
Therefore, there is no contradiction with \Cref{counter}.

If the solution to \Cref{conjecture1} is affirmative, then the regularity theory developed in \Cref{SecReg} would hold with \(g=1\). In particular, one would obtain a local \(H^{\log}\)-regularity and possibly derive Bressan's conjecture; see the discussion in \cite{bruecolombo}.

On the other hand, if the answer to \Cref{conjecture1} is negative, then one should expect a genuinely Osgood-type behavior for a general regular Lagrangian flow \(\boldsymbol X(t,0,x)\). Since Osgood flows are considerably ``wilder'' than Cauchy--Lipschitz flows, informal heuristics suggest that such flows might mix sets more efficiently. In that case, it becomes conceivable that the bounds obtained in \Cref{mixingthm} are closer to the correct endpoint behavior, and possibly even sharp.

We also record a second formulation of this problem, which in fact served as one of the original motivations for the present paper.

\begin{question}\label{conjecture2}
    Let \(\boldsymbol b \in L^1((0,T);W^{1,1}(\mathbb R^d))\) satisfy \eqref{growth} and \eqref{compressibility}, and let \(\boldsymbol X(t,s,x)\) denote the associated regular Lagrangian flow of \eqref{flow}. Given a ball \(B_R\subset\mathbb R^d\) and \(\epsilon>0\), does there exist a measurable set \(\Omega_{\epsilon,R}\subset B_R\) such that
    \(
    \mathcal L^d(B_R\setminus \Omega_{\epsilon,R})<\epsilon
    \)
    and such that the restriction
    \(
    \boldsymbol X(t,0,\cdot)\big|_{\Omega_{\epsilon,R}}
    \)
    is Lipschitz for every \(t\in[0,T]\)? If so, how may
\(\operatorname{Lip}\bigl(\boldsymbol X(t,0,\cdot)\big|_{\Omega_{\epsilon,R}}\bigr)
    \)
    grow as \(\epsilon\to0\)?
\end{question}

\begin{remark}
    \textnormal{The problem concerning the growth of
\(\operatorname{Lip}\bigl(\boldsymbol X(t,0,\cdot)\big|_{\Omega_{\epsilon,R}}\bigr)
    \)
    is motivated by the paper of Bonicatto and Marconi \cite{Bonicatto02122021}.}
\end{remark}

Finally, we close the paper by solving \Cref{conjecture1} and \Cref{conjecture2} in the one-dimensional setting \(d=1\).

\begin{proposition} \label{proof1d}
    Assume that \(d=1\), and let \(\boldsymbol b\) satisfy \eqref{growth} and \eqref{compressibility}. Assume moreover that, for almost every \(t\in(0,T)\), the map
    \(
    x\mapsto \boldsymbol b_t(x)\coloneqq \boldsymbol b(t,x)
    \)
    belongs to \(\bv_{\loc}(\mathbb R)\), with
    \[
    \int_0^T |D\boldsymbol b_t|(I)\,\dd t<\infty
    \qquad \text{for every bounded interval } I\subset \mathbb R.
    \]
    Let \(\boldsymbol X(t,s,x)\) be the regular Lagrangian flow associated with \eqref{flow} (see \cite{ambrosio}), and fix a compact interval \(J\subset \mathbb R\).

    Then there exists a nonnegative function \(K\in L^1(J)\) such that
    \begin{equation}
        |\boldsymbol X(t,0,x)-\boldsymbol X(t,0,y)|
        \leq
        \exp\bigl\{K(x)+K(y)\bigr\}|x-y|
        \label{superlipschitz}
    \end{equation}
    for almost every \(x,y\in J\) and every \(t\in[0,T]\). Furthermore, there exists a compact interval \(J_0\supset J\) such that
    \begin{equation}
        \int_J K(x)\,\dd x
        \leq
        C\left(
            \mathcal L^1(J)+\int_0^T |D\boldsymbol b_s|(J_0)\,\dd s
        \right),
        \label{boundK}
    \end{equation}
    where \(C>0\) is a universal constant.
\end{proposition}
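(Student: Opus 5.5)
In one dimension I would exploit monotonicity: for a.e. \(t\) and \(x<y\), the flow \(\boldsymbol X(t,0,\cdot)\) is nondecreasing, so \(\boldsymbol X(t,0,y)-\boldsymbol X(t,0,x)\geq 0\). I would work with the log-distance \(\ell(t)=\log(\boldsymbol X(t,0,y)-\boldsymbol X(t,0,x))\). Its derivative is
\[
\frac{\boldsymbol b_t(\boldsymbol X(t,0,y))-\boldsymbol b_t(\boldsymbol X(t,0,x))}{\boldsymbol X(t,0,y)-\boldsymbol X(t,0,x)}\leq \frac{|D\boldsymbol b_t|([\boldsymbol X(t,0,x),\boldsymbol X(t,0,y)])}{\boldsymbol X(t,0,y)-\boldsymbol X(t,0,x)}.
\]
Controlling the time integral of this pointwise reproduces \eqref{estimate}, which fails by \Cref{counter}. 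So I would abandon pointwise control in time and control only the \emph{endpoint} displacement, using a one-dimensional covering argument.

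The plan is to bound the ratio \(\frac{\boldsymbol X(t,0,y)-\boldsymbol X(t,0,x)}{y-x}=\frac{1}{y-x}\int_x^y\partial_z\boldsymbol X(t,0,z)\,\dd z\) by maximal functions. In 1D the regular Lagrangian flow has a BV-type derivative, and the Jacobian \(J_t(z)=\partial_z\boldsymbol X(t,0,z)\) satisfies formally
\[
\log J_t(z)=\int_0^t (D\boldsymbol b_s)(\boldsymbol X(s,0,z))\,\dd s.
\]
This gives \(J_t(z)\le \exp\{\int_0^t |D\boldsymbol b_s|(\boldsymbol X(s,0,z))\,\dd s\}\) in a measure sense. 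The key is then to bound the average of \(J_t\) over \([x,y]\) by \(e^{K(x)+K(y)}\), where \(K\) is built from the maximal function of \(z\mapsto \log\) of a suitably defined Jacobian. Concretely I would set
\[
K(z)=\sup_t \operatorname M\bigl(\log_+ J_t\bigr)(z)
\]
or, better, use the one-sided maximal function of the measure \(\mu(A)=\int_0^T|D\boldsymbol b_s|(\boldsymbol X(s,0,A))\,\dd s\) pushed back to initial data.

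The cleanest route I would try avoids Jensen and uses additivity. Define the finite measure \(\nu\) on \(J\) by \(\nu(A)=\int_0^T|D\boldsymbol b_s|(\boldsymbol X(s,0,A))\,\dd s\). It is finite with \(\nu(J)\le\int_0^T|D\boldsymbol b_s|(J_0)\,\dd s\), where \(J_0\) contains all trajectories from \(J\); this needs a sublevel/boundedness step in the spirit of \Cref{sublevel}, or boundedness of \(\boldsymbol b\) via \eqref{growth}. A Gronwall argument on the interval \([\boldsymbol X(t,0,x),\boldsymbol X(t,0,y)]\), whose length \(L(t)\) has \(\dot L\le |D\boldsymbol b_t|(\text{interval})\), gives
\[
\frac{\dd}{\dd t}L(t)\le |D\boldsymbol b_t|(\boldsymbol X(t,0,[x,y])).
\]
Hence \(L(t)\le (y-x)+\nu([x,y])\). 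This additive bound is the crucial improvement over the multiplicative one. It yields
\[
\frac{L(t)}{y-x}\le 1+\frac{\nu([x,y])}{y-x}\le 1+\operatorname M\nu(x)+\operatorname M\nu(y).
\]
So I would take \(K=\log(1+2\operatorname M\nu)\). By the weak-\((1,1)\) bound, \(\mathcal L^1\{\operatorname M\nu>\lambda\}\le C\nu(J_0)/\lambda\), and the Cavalieri computation used in \Cref{pointwisefortransport} gives \(\int_J\log(1+\operatorname M\nu)\le C(\mathcal L^1(J)+\nu(J_0))\). This is exactly \eqref{boundK}.

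The main obstacle is justifying the additive differential inequality \(\dot L\le|D\boldsymbol b_t|(\boldsymbol X(t,0,[x,y]))\) rigorously for a regular Lagrangian flow with merely BV fields. Specifically, I must ensure that \(\boldsymbol X(t,0,[x,y])\) is essentially the interval between the endpoints, by order preservation a.e. in 1D. I would first establish it for mollified \(\boldsymbol b^\varepsilon\), where the flow is a smooth increasing diffeomorphism and \(|\boldsymbol b(q)-\boldsymbol b(p)|\le|D\boldsymbol b|([p,q])\) holds exactly. I would then pass to the limit using stability of regular Lagrangian flows in \(L^0_{\loc}\) and lower semicontinuity of \(|D\boldsymbol b^\varepsilon_s|\) against the pushforwards, extracting a.e. convergence on \(J\times J\). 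The resulting estimate holds with \(\nu\) replaced by a weak limit of the \(\nu^\varepsilon\), which still has mass at most \(\int_0^T|D\boldsymbol b_s|(J_0)\,\dd s\) plus an \(o(1)\) term.
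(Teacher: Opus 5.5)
\textbf{Verdict: the overall approach is right, but the step defining the measure fails as written.} Your architecture is the paper's. You prove an additive bound $\boldsymbol X_t(y)-\boldsymbol X_t(x)\le (y-x)+\mu([x,y])$ for a finite measure $\mu$ on $J$. You then use $\mu([x,y])/(y-x)\le \operatorname M\mu(x)$ for the uncentered maximal function, set $K=\log(1+C\operatorname M\mu)$, and obtain \eqref{boundK} from the weak-$(1,1)$ bound and Cavalieri.

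\textbf{The gap is your choice of $\mu$.} The measure $\nu(A)=\int_0^T|D\boldsymbol b_s|(\boldsymbol X(s,0,A))\,\dd s$ is built from \emph{images}, and it does not satisfy the additive bound. The claim that order preservation makes $\boldsymbol X(s,0,[x,y])$ essentially the interval between the endpoint images is false: one-dimensional regular Lagrangian flows can open gaps. Take $\boldsymbol b(t,x)=\max\{-1,\min\{1,x/t\}\}$. Then $D\boldsymbol b_t=t^{-1}\mathbbm{1}_{(-t,t)}\,\dd x\ge 0$, $|D\boldsymbol b_t|(\mathbb R)=2$, and the flow is $\boldsymbol X_t(x)=x+t\,\mathrm{sign}(x)$. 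The image of any $[x,y]$ meets $(-s,s)$ in at most one point, so $\nu\equiv 0$. Yet $\boldsymbol X_t(y)-\boldsymbol X_t(x)=y-x+2t$ for $x<0<y$. Images cannot see the mass of $D\boldsymbol b_s$ lying in the gap, so your inequality $\dot L\le|D\boldsymbol b_t|(\boldsymbol X(t,0,[x,y]))$ fails. Your first display already used the right object, the interval between the endpoint images. Integrating its numerator in time, rather than the ratio, is exactly the paper's step.

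\textbf{How the paper handles it.} It takes the right-continuous representative of $\boldsymbol b_s$, which is harmless along a.e.\ trajectory by compressibility. Then $\boldsymbol b_s(v)-\boldsymbol b_s(u)=D\boldsymbol b_s((u,v])\le (D\boldsymbol b_s)_+((u,v])$. Integrating the ODE directly, with no Gronwall and no approximation, gives $\boldsymbol X_t(y)-\boldsymbol X_t(x)\le y-x+\int_0^t(D\boldsymbol b_s)_+((\boldsymbol X_s(x),\boldsymbol X_s(y)])\,\dd s$. The measure $\mu_T$ is then defined by the nondecreasing, right-continuous distribution function $H(r)=\int_0^T(D\boldsymbol b_s)_+((\boldsymbol X^+_s(a),\boldsymbol X^+_s(r)])\,\dd s$, which counts the gaps automatically.

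\textbf{Your fallback works but is heavier.} Taking a weak-$*$ limit of the $\nu^\varepsilon$ from mollified (diffeomorphic) flows, with upper semicontinuity on closed intervals, does repair the argument. The limit picks up the gap mass; in the example it appears as an atom of mass $2T$ at $0$. However, this route needs uniform-in-time stability of $\bv$ regular Lagrangian flows and uniform confinement of the approximating trajectories.

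\textbf{The compact set $J_0$.} Your justification for $J_0$ does not work. \Cref{sublevel} only confines trajectories outside a small set, and \eqref{growth} does not bound $\boldsymbol b$. Use monotonicity, as the paper does. Choose good points $a<b$ with $J\subset(a,b)$. Every trajectory from $J$ is then trapped between the continuous curves $s\mapsto\boldsymbol X_s(a)$ and $s\mapsto\boldsymbol X_s(b)$, so $J_0=[\inf_s\boldsymbol X_s(a),\sup_s\boldsymbol X_s(b)]$ works. The same trapping, applied to $\boldsymbol X^\varepsilon$, is what your mollification route would need uniformly in $\varepsilon$.
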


\begin{proof}
    We first recall a convention that will be used throughout the proof. For almost every \(s\in(0,T)\), we choose the right-continuous representative of the one-dimensional \(\bv_{\loc}\) function
    \(
        x\mapsto \boldsymbol b_s(x)\coloneqq \boldsymbol b(s,x).
    \)
    With this convention, if \(u<v\), then
    \begin{equation}
              \boldsymbol b_s(v)-\boldsymbol b_s(u)
        =
        D\boldsymbol b_s((u,v])
        \leq
        (D\boldsymbol b_s)_+((u,v]).  \label{fundthm}
    \end{equation}
   Furthermore, let us denote by $\Omega \subset \mathbb R$ a set of full measure such that \[\boldsymbol{X}(t,0,x) = x + \int_0^t \boldsymbol{b}_s(\boldsymbol{X}(s,0,x))\,\dd s\] for all $x \in \Omega$. In order to simplify the notation, we also write $\boldsymbol{X}_s(x) = \boldsymbol{X}(s,0,x)$.

 A key fact in one space dimension is that the regular Lagrangian flow is monotone in the spatial variable. More precisely, we have that
 \begin{equation}
             x\leq y
        \quad\Longrightarrow\quad
        \boldsymbol X_s(x) \leq \boldsymbol X_s(y)\label{monotonicity}
 \end{equation}
    for every \(s\in[0,T]\) and $x , y \in \Omega$ (diminishing $\Omega$ if necessary). This follows, for instance, by approximation with smooth vector fields and passage to the limit. 

    Finally, let us choose an open interval \((a,b)\supset J\), with \(a\) and \(b\) belonging to $\Omega$. If we set
    \[
        J_0\coloneqq 
        \left[
            \inf_{0\leq s\leq T}\boldsymbol X_s(a),
            \sup_{0\leq s\leq T}\boldsymbol X_s(b)
        \right],
    \]
    we would then have the invariance
    \(
        \boldsymbol X_s(J)\subset J_0 \text{ for every }s\in[0,T] \text{ and $x \in \Omega \cap J$}.
    \)

    As a result, if \(x<y\) are in $\Omega \cap J$, we get
    \[
    \begin{aligned}
        \boldsymbol X_t(y)-\boldsymbol X_t(x)
        &=
        y-x
        +
        \int_0^t
        \Big(
            \boldsymbol b_s(\boldsymbol X_s(y))
            -
            \boldsymbol b_s(\boldsymbol X_s(x))
        \Big)\,\dd s,
    \end{aligned}
    \]
    and, for
    \(
        \boldsymbol X_s(x)\leq \boldsymbol X_s(y),
    \)
    \eqref{fundthm} gives
    \begin{equation}
               \boldsymbol X_t(y)-\boldsymbol X_t(x)
        \leq
        y-x
        +
        \int_0^t
        (D\boldsymbol b_s)_+
        \bigl(
            (\boldsymbol X_s(x),\boldsymbol X_s(y)]
        \bigr)
        \,\dd s. \label{fundthm2} 
    \end{equation}
 
    We now define a positive measure to bound the right-hand side of \eqref{fundthm2} for $x, y \in J$. First, due to \eqref{monotonicity}, we may define for $0\leq s \leq T$ and $x \in \R$
    $$\boldsymbol{X}_s^+(\xi) \coloneqq \lim_{\Omega \ni \eta \to \xi_+} \boldsymbol{X}_s(\eta).$$
    As a result, we see that, for $r \in (a,b)$,
    \[
        H(r)\coloneqq 
        \int_0^T
        (D\boldsymbol b_s)_+
        \bigl(
            (\boldsymbol X_s^+(a),\boldsymbol X_s^+(r)]
        \bigr)
        \,\dd s
    \]
    is nondecreasing and right-continuous. Hence, it defines a positive Borel-Stieltjes measure \(\mu_T\) on \((a,b)\). Furthermore, it is not difficult to see that \eqref{fundthm2} implies
    \begin{equation}
                |\boldsymbol X_t(x)-\boldsymbol X_t(y)|
\leq |x-y|+\mu_T([x,y])\label{fundthm3}
    \end{equation}
    for $x<y$ in $\Omega \cap J$ and $t \in [0,T]$.

    Next, let us consider the local uncentered maximal function
    \[
        M_J\mu_T(x)\coloneqq 
        \sup_{\substack{x\in I, \operatorname{Int}(I) \neq \varnothing\\ I\subset J\text{ interval}}}
        \frac{\mu_T(I)}{\mathcal L^1(I)}.
    \]
    Since \([x,y]\subset J\) contains both \(x\) and \(y\), we have
    \[
        \mu_T([x,y])
        \leq
        \frac12
        \bigl(
            M_J\mu_T(x)+M_J\mu_T(y)
        \bigr)|x-y|.
    \]
    Combining this with \eqref{fundthm3}, we obtain
    \[
        |\boldsymbol X(t,0,y)-\boldsymbol X(t,0,x)|
        \leq
        \left(
            1+
            \frac12 M_J\mu_T(x)
            +
            \frac12 M_J\mu_T(y)
        \right)|x-y|.
    \]
    Letting
    \[
    K(x)\coloneqq \log\bigl(1+M_J\mu_T(x)\bigr),
    \]
    and using
    \[
        1+\frac12 A+\frac12 B
        \leq
        (1+A)(1+B),
        \qquad A,B\geq0,
    \]
    we derive the desired bound \eqref{superlipschitz}.

    It remains to establish \eqref{boundK}. First, it is clear that
    \begin{equation}
                \mu_T(J)
        \leq
        \int_0^T
        (D\boldsymbol b_s)_+(J_0)\,\dd s
        \leq
        \int_0^T
        |D\boldsymbol b_s|(J_0)\,\dd s. \label{fundthm4} 
    \end{equation}
    What is more, by the Hardy--Littlewood theorem,
    \[
        \mathcal L^1
        \bigl(
            \{x\in J:\ M_J\mu_T(x)>\lambda\}
        \bigr)
        \leq
        \min\left\{
            \mathcal L^1(J),
            \frac{C\mu_T(J)}{\lambda}
        \right\}
    \]
    for every \(\lambda>0\), with \(C>0\) universal. Consequently, the Cavalieri principle provides
    \[
    \begin{aligned}
        \int_J K(x)\,\dd x
        &=
        \int_0^\infty
        \mathcal L^1
        \bigl(
            \{x\in J:\ \log(1+M_J\mu_T(x))>r\}
        \bigr)
        \,\dd r
        \\
        &=
        \int_0^\infty
        \frac{
        \mathcal L^1
        \bigl(
            \{x\in J:\ M_J\mu_T(x)>\lambda\}
        \bigr)
        }{1+\lambda}\,\dd\lambda\\
        &\leq
        \mathcal L^1(J)
        \int_0^1\frac{\dd\lambda}{1+\lambda}
        +
        C\mu_T(J)
        \int_1^\infty
        \frac{\dd\lambda}{\lambda(1+\lambda)}
        \\
        &\leq
        C\bigl(\mathcal L^1(J)+\mu_T(J)\bigr).
    \end{aligned}
    \]
    Combining this with \eqref{fundthm4}, we conclude  \eqref{boundK}. The proof is complete.
\end{proof}

It is worth stressing that, although the proof of \Cref{proof1d} still uses maximal functions, they appear only after the time variable has been integrated out. In particular, one does not need to integrate differential inequalities involving maximal functions along the flow. Thus, while the argument bears a superficial resemblance to the methods used throughout the paper, its underlying mechanism is somewhat different. At the same time, it is worth mentioning that \Cref{proof1d} strongly suggests that any counterexample to \Cref{conjecture1}, should one exist, must be genuinely multidimensional.

        \section*{Conflict of interest}

The authors state that there are no conflicts of interest regarding this work.

        \section*{Availability of data and material}

Data sharing is not applicable to this article since no datasets were generated or examined in the course of this study.

\bibliographystyle{plain}
\bibliography{bib}
\end{document}